\DeclareFontShape{T1}{lmr}{b}{sc}{<->ssub*cmr/bx/sc}{}
\DeclareFontShape{T1}{lmr}{bx}{sc}{<->ssub*cmr/bx/sc}{}
\DeclarePairedDelimiter\abs{\lvert}{\rvert}%
\DeclareFontFamily{U}{mathx}{\hyphenchar\font45}
\DeclareFontShape{U}{mathx}{m}{n}{
      <5> <6> <7> <8> <9> <10>
      <10.95> <12> <14.4> <17.28> <20.74> <24.88>
      mathx10
      }{}
\DeclareSymbolFont{mathx}{U}{mathx}{m}{n}
\DeclareMathSymbol{\bigtimes}{1}{mathx}{"91}
  \theoremstyle{plain}
\newtheorem{theorem}{Theorem}[section]
\newtheorem{proposition}[theorem]{Proposition}
\newtheorem{corollary}[theorem]{Corollary}
\newtheorem{lemma}[theorem]{Lemma} 
\newtheorem{deflemma}[theorem]{Definition-Lemma} 
  \theoremstyle{remark}
\newtheorem{remark}[theorem]{Remark}
  \theoremstyle{definition}
\newtheorem{definition}[theorem]{Definition}
\newcommand{\resp}{\textit{resp. }}
\newcommand{\ie}{\textit{i.e. }}
\newcommand{\NN}{\mathbb{N}}
\newcommand{\ZZ}{\mathbb{Z}}
\newcommand{\RR}{\mathbb{R}}
\renewcommand{\SS}{\mathbb{S}}
\newcommand{\FF}{\mathbb{F}}
\newcommand{\QQ}{\mathbb{Q}}
\newcommand{\CC}{\mathbb{C}}
\renewcommand{\S}{\operatorname{S}}
\newcommand{\T}{\operatorname{T}}
\newcommand{\ab}{\operatorname{ab}}
\newcommand{\nab}{\raisebox{-.65ex}{\ensuremath{\nabla}}}
\newcommand{\RW}{\raisebox{-.75ex}{\ensuremath{\mathrm{R,\!W}}}}
\newcommand{\PW}
{\raisebox{-.75ex}{\ensuremath{\mathrm{P,\!W}}}}
\newcommand{\AW}
{\raisebox{-.75ex}{\ensuremath{\!\mathrm{AW}}}}
\newcommand{\AWnab}
{\raisebox{-.75ex}{\ensuremath{\!\mathrm{AW},\!\nabla}}}
\title{Virtual knot theory on a group}
\author{Arnaud Mortier \\ \itshape arno.mortier@laposte.net}
\date{\today}
\begin{document}

\maketitle

\begin{abstract}
\footnotesize

Given a group endowed with a Z/2-valued morphism we associate a Gauss diagram theory, and show that for a particular choice of the group these diagrams encode faithfully virtual knots on a given arbitrary surface.
This theory contains all of the earlier attempts to decorate Gauss diagrams, in a way that is made precise via \textit{symmetry-preserving maps}. These maps become crucial when one makes use of decorated Gauss diagrams to describe finite-type invariants. In particular they allow us to generalize Grishanov-Vassiliev's formulas and to show that they define invariants of virtual knots.

\end{abstract}

\tableofcontents

\vspace*{0.2cm}

Gauss diagrams were introduced in knot theory as a means of representing knots and their finite-type invariants \citep{PolyakViro, Goussarov}, allowing compactification and generalization of formulas due to J.Lannes \citep{Lannes}. Since then, several generalizations have been attempted to adapt them to knot theory in thickened surfaces by decorating them with topological information \citep{Fiedler, Grishanov, MortierPolyakEquations}.

Our goal is to construct a unifying \enquote{father} framework, and to describe how to get down from there to other versions with less data.

First we define and study (virtual) knot diagrams on an arbitrary surface $\Sigma$: these are tetravalent graphs embedded in $\Sigma$, some of whose double points (the \enquote{real} ones) are pushed and desingularized into a real line bundle over $\Sigma$. Defining Gauss diagrams requires a global notion for the branches at a real crossing to be one \enquote{over} the other, and a global notion of writhe of a crossing. It is shown that these notions can be defined simultaneously if and only if $\Sigma$ is orientable. If it is not, we sacrifice the globality of one property, and take into account its monodromy. It is shown that when the total space of the bundle is orientable, the writhes are globally defined and the monodromy of the \enquote{over/under} datum is the first Stiefel-Whitney class of the tangent bundle to $\Sigma$, $w_1(\Sigma)$.

In Section~\ref{sec:VKTG} is given a definition of Gauss diagrams decorated by elements of a fixed group $\pi$, subject to usual Reidemeister moves, and to additional  \enquote{conjugacy moves}, depending on a fixed group homomorphism $w:\pi\rightarrow \FF_2$. It is shown that when there is a surface $\Sigma$ such that $\pi=\pi(\Sigma)$ and $w_1=w_1(\Sigma)$, then there is a $1-1$ correspondence between Gauss diagrams and virtual knot diagrams, that induces a correspondence between the equivalence classes (virtual knot types) on both sides.

A lighter kind of Gauss diagrams, called \textit{abelian}, is defined in Subsection~\ref{subsec:abelian} following the idea of T.Fiedler's $H_1(\Sigma)$-decorated diagrams (\cite{Fiedler}) and shown to be equivalent to the above when $\pi$ is abelian and $w$ is trivial. The little drawback of this version is that it becomes more difficult to compute the homological decoration of an arbitrary loop. Two formulas are presented in \ref{subsec:homform} to sort this out, involving quite unexpected combinatorial tools.

Finally, we describe invariance criteria for the analog of Goussarov-Polyak-Viro's invariants \citep{GPV} in this framework. As an application, we obtain a generalization of Grishanov-Vassiliev formulas \citep{Grishanov}, and a notion of Whitney index for virtual knots whose underlying immersed curve is non nullhomotopic.

\subsection*{Acknowledgements}
I wish to thank Christian Blanchet who invited me at the Institut de Math\'ematiques de Jussieu where this work was done. This work has benefited from discussions with Michael Polyak, Micah Chrisman, and Thomas Fiedler. 

\section{Preliminary: classical Gauss diagrams and their Reidemeister moves}\label{subsec:classical}

\begin{definition}A \textit{classical Gauss diagram} is an equivalence class of an oriented circle in which a finite number of couples of points are linked by an abstract oriented arrow with a sign decoration, up to positive homeomorphism of the circle.  A Gauss diagram with $n$ arrows is said to be of \textit{degree} $n$.
\end{definition}
It may happen that one regards Gauss diagrams as topological objects (drawing loops on them, considering their first homology). In that case, one must beware of the fact that \textit{the arrows do not topologically intersect} -- that is what is meant by \enquote{abstract}. However, the fact that two arrows may \textit{look like} they intersect is something combinatorially well-defined, and interesting for many purposes.

\textbf{Fact:}
There is a natural way to associate a Gauss diagram with a knot diagram in the sphere $\SS^2$, from which the knot diagram can be uniquely recovered.  Fig.\ref{1} illustrates this fact.

\begin{figure}[h]
\centering 
\psfig{file=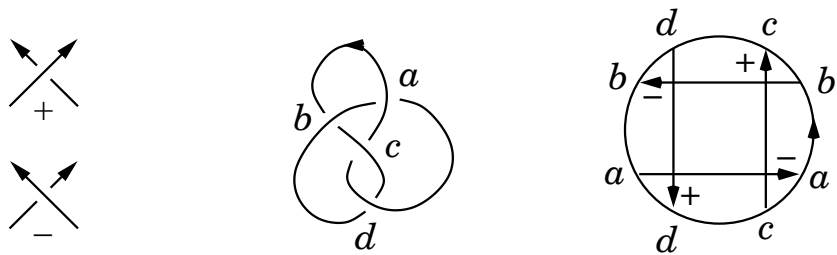,scale=0.9}
\caption{The writhe convention, a diagram of the figure eight knot, and its Gauss diagram -- the letters are here only for the sake of clarity.}\label{1}
\end{figure}

However, not every Gauss diagram actually comes from a knot diagram in that way. This observation has lead to the development of virtual knot theory \citep{KauffmanVKT99}: basically a virtual knot is a Gauss diagram which does not come from an actual knot. There is a knot-diagrammatic version of these, using virtual crossings subject to virtual Reidemeister moves - that can be thought of as a unique \enquote{detour move}. A detour move is naturally any move that leaves the underlying Gauss diagram unchanged.

Of course virtual knot diagrams are also subject to the usual Reidemeister moves, and these do change the face of the Gauss diagram. 
We call them R-moves for simplicity - and to make it clear whether knot diagrams or Gauss diagrams are considered. Here is a combinatorial description of R-moves.

\subsubsection*{R$_1$-moves}

An R$_1$-move is the birth or death of an isolated arrow, as shown in Fig.\ref{Rmoves} (top-left). There is no restriction on the direction or the sign of the arrow.

\subsubsection*{R$_2$-moves}

An R$_2$-move is the birth or death of a pair of arrows with different signs, whose heads are consecutive as well as their tails (Fig.\ref{Rmoves}, top-right).

If one restricts oneself to Gauss diagrams that come from classical knot diagrams, then there is an additional condition as for the creating direction: indeed, two arcs in a knot diagram can be subject to a Reidemeister II move if and only if they \textit{face each other}. In the virtual world, there is no such condition since any two arcs can be brought to face each other by detour moves.

It may be good to know that this condition can be read directly on the Gauss diagram: indeed, two arcs face each other in a knot diagram if one can join them by walking along the diagram and \textit{turning to the left} at each time one meets a crossing. Thanks to the decorations of the arrows, it makes sense for a path in a Gauss diagram to \textit{turn to the left}.

\begin{figure}[h!]
\centering 
\psfig{file=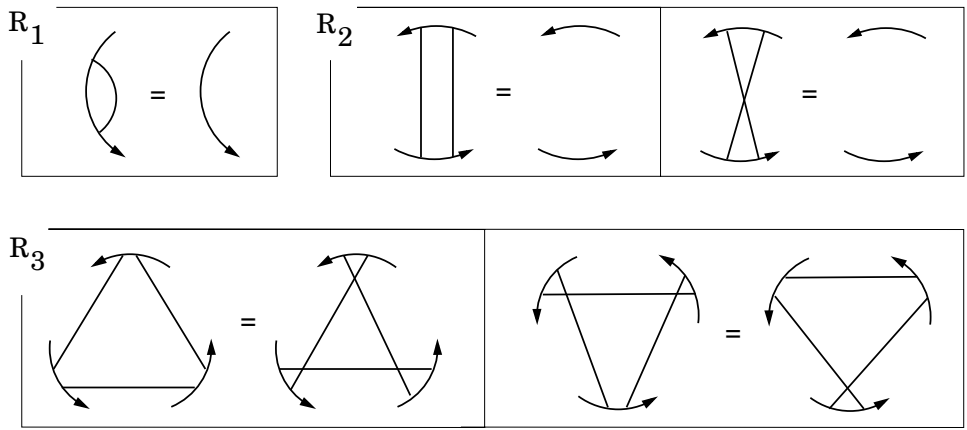,scale=0.7}
\caption{$\textrm{R}$-moves for Gauss diagrams (see above and below the rules for the decorations)}\label{Rmoves}
\end{figure}

\subsubsection*{R$_3$-moves}

\begin{definition}\label{def_epsilon}
In a classical Gauss diagram of degree $n$, the complementary of the arrows is made of $2n$ oriented components. These are called the \textit{edges} of the diagram. In a diagram with no arrow, we still call the whole circle an edge.

Let $e$ be an edge in a Gauss diagram, between two consecutive arrow ends that do not belong to the same arrow.
Put
$$\eta (e)=\left\lbrace \begin{array}{l}
+1 \text{ if the arrows that bound $e$ cross each other} \\
-1 \text{ otherwise}
\end{array}\right. ,$$
and let $\uparrow\!\!(e)$ be the number of arrowheads at the boundary of $e$.
Then define
$$\varepsilon (e)= \eta(e)\cdot(-1)^{\uparrow(e)}.$$
Finally, define $\mathrm{w}(e)$ as the product of the writhes of the two arrows at the boundary of $e$.

\end{definition}

An R$_3$-move is the simultaneous switch of the endpoints of three arrows as shown on Fig.\ref{Rmoves} (bottom), with the following conditions:
\begin{enumerate}
\item The value of $\mathrm{w}(e)\varepsilon(e)$ should be the same for all three visible edges $e$. This ensures that the piece of diagram containing the three arrows can be represented in a knot-diagrammatic way without making use of virtual crossings.
\item The values of  $\uparrow\!\!(e)$ should be pairwise different. This ensures that one of the arcs in the knot diagram version actually \enquote{goes over} the others.
\end{enumerate}

\begin{remark}
From a simplicial viewpoint, the sign $\mathrm{w}(e)\varepsilon(e)$ gives a natural co-orientation of the $1$-codimensional strata corresponding to R$_3$ moves. This is exploited in \citep{FT1cocycles} to construct finite-type $1$-cocycles.
\end{remark}

\section{Knot and virtual knot diagrams on an arbitrary surface}\label{sec:KDAS}

The goal of this section is to examine when and how one can define a couple of equivalent theories \enquote{virtual knots $-$ Gauss diagrams} that generalizes knot theory in an arbitrary $3$-manifold $M$. What first appears is that a Gauss diagram depends on a projection; so it seems unavoidable to ask for the existence of a surface $\Sigma$ (maybe with boundary, non orientable, or non compact), and a \enquote{nice} map $p:M\rightarrow \Sigma$. For the \textit{over} and \textit{under} branches at a crossing to be well-defined at least locally, the fibers of $p$ need to be equipped with a total order: this leaves only the possiblity of a real line bundle.

\subsection{Thickenings of surfaces}

Let us now split the discussion according to the two kinds of decorations that one would expect to find on a Gauss diagram: signs (local writhes), and orientation of the arrows.

\subsubsection*{Local writhes}

For a knot in an arbitrary real line bundle, there are situations in which it is possible to switch over and under in a crossing by a mere \textit{diagram isotopy}. For instance, in the non-trivial line bundle over the annulus $\SS^1\times\RR$, a full rotation of the closure of the two-stranded elementary braid $\sigma_1$ turns it into the closure of $\sigma_1^{-1}$ (Fig.~\ref{pic:nonorientable}).

\begin{figure}[h!]
\centering 
\psfig{file=
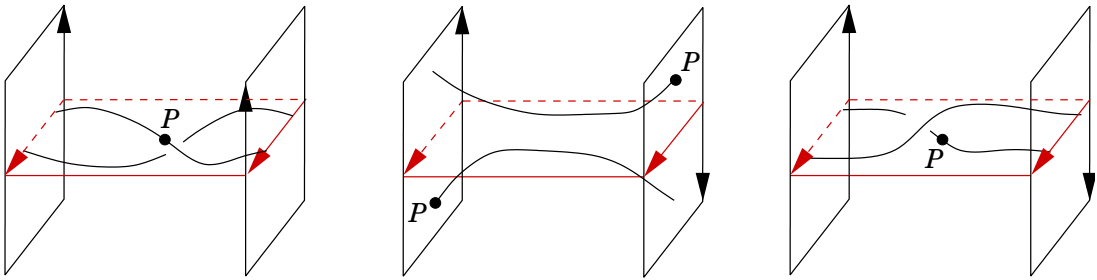,scale=1.3}
\caption{Non trivial line bundle over the annulus -- as one reads from left to right, the knot moves towards the right of the picture.}\label{pic:nonorientable}
\end{figure}

Fig.~\ref{pic:nonorientable} would be exactly the same (except for the gluing indications) if one considered the trivial line bundle over the Moebius strip. Note that this diagram would then represent a $2$-component link. In fact, it is possible to embed this picture in \textit{any non-orientable total space} of a line bundle over a surface.

This phenomenon reveals the fact that in these cases, there is no way to define the local writhe of a crossing. However, according to \cite{FiedlerSSS} (Definition~$1.$), there is a well-defined writhe as soon as the total space of the bundle is oientable. 

\begin{definition}\label{def:thick}
We call a \textit{thickened surface} a real line bundle over a surface, whose total space is orientable.
\end{definition}

\begin{deflemma}\label{def:wfg}
If $M\rightarrow\Sigma$ is a thickened surface, then its first Stiefel-Whitney class coincides with that of the tangent bundle to $\Sigma$. 
This class induces a homomorphism $w_1(\Sigma):\pi_1(\Sigma)\rightarrow\FF_2$.
The couple $(\pi_1(\Sigma), w_1(\Sigma))$ is called the \textit{weighted fundamental group} of $\Sigma$. Note that in particular the thickening of $\Sigma$ is the trivial bundle $\Sigma\times\RR$ if and only if $\Sigma$ is orientable.
\end{deflemma}

\subsubsection*{Arrow orientations}

Note that the writhe of a crossing for a knot in $M\rightarrow\Sigma$ depends only on one choice, that of an orientation for $M$. The important thing is that this choice is \textit{global}, so that it makes sense to compare the writhes of different crossings (they live in \enquote{the same} $\ZZ/2\ZZ$).

Similarly, for the orientation of the arrows in a Gauss diagram to simultaneously make sense, one needs a global definition of the over/under datum at the crossings; that is, the fibres of $M\rightarrow\Sigma$ should be simultaneously and consistently oriented. In other words, $M\rightarrow\Sigma$ should be the trivial line bundle.

According to our definition of a thickened surface, this happens only if the surface is orientable.\newline

\textit{So it seems that one has a choice to make, either restricting one's attention to orientable surfaces, or taking into account the monodromy of whatever is not globally defined. Additional \textit{conjugacy moves} will be needed when one defines Gauss diagrams. The convention to consider only fibre bundles with an orientable total space is arbitrary, its only use is to reduce the number of monodromy morphisms to $1$ instead of $2$.}

\subsubsection*{Virtual knot diagrams on an arbitrary surface}

Fix an arbitrary surface $\Sigma$ and denote its thickening by $M\rightarrow\Sigma$.

\begin{definition}\label{def:virtualknotdiags}
A \textit{virtual knot diagram} on $\Sigma$ is a generic immersion $\SS^1\rightarrow \Sigma$ whose every double point has been decorated
\begin{itemize}
\item either with the designation \enquote{virtual} (which is nothing but a name),
\item or with a way to desingularize it locally into $M$, up to local isotopy.
\end{itemize}
\end{definition}

These diagrams are subject to the usual Reidemeister moves, dictated by local isotopy in $M$, and to the virtual \enquote{detour} moves which are studied in the next section. As explained before, if one chooses an orientation for $M$, then the real crossings of such a diagram have a well-defined writhe.

\subsection{Diagram isotopies and detour moves}\label{subsec:globalmoves}

Here by \textit{knot diagram} we mean a virtual knot diagram on a fixed arbitrary surface $\Sigma$, as defined above. In this case a \textit{diagram isotopy}, usually briefly denoted by $H:\operatorname{Id}\rightarrow h$, is the datum of a diffeomorphism $h$ of $\Sigma$ together with an isotopy from $\operatorname{Id}_\Sigma$ to $h$. A \textit{detour move} is a boundary-fixing homotopy of an arc that, before and after the homotopy, goes through only virtual crossings (such an arc is called \textit{totally virtual}). Though both of these processes seem rather simple, it will be useful to understand how they interact.

\begin{lemma}\label{lem:iso_detour} A knot diagram obtained from another by a sequence of diagram isotopies alternating with detour moves may always be obtained by a single diagram isotopy followed by detour moves. 
\end{lemma}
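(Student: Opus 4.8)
The plan is to isolate a single \emph{commutation rule} --- a detour move followed by a diagram isotopy equals that same diagram isotopy followed by a transported detour move --- and then to use it, bubble-sort fashion, to push every diagram isotopy to the front of the sequence, finally concatenating those isotopies into one.

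\emph{The commutation rule.} A diffeomorphism $h$ of $\Sigma$ acts naturally on all the data involved: it sends a virtual knot diagram $D$ to a virtual knot diagram $h(D)$ (post-compose the immersion with $h$, carry the double-point set along, leave the combinatorial \enquote{virtual}/\enquote{real} labels untouched), it sends a totally virtual arc of $D$ to a totally virtual arc of $h(D)$, and it sends a boundary-fixing homotopy $(\gamma_t)$ of an arc to the boundary-fixing homotopy $(h\circ\gamma_t)$. Hence, if $\delta$ is a detour move carrying $D$ to $D'$ through the arc homotopy $(\gamma_t)$, then $h_\ast\delta:=(h\circ\gamma_t)$ is again a legitimate detour move and it carries $h(D)$ to $h(D')$. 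If moreover $H\colon\operatorname{Id}\to h$ is a diagram isotopy, so that $H$ carries $D'$ to $h(D')$, then performing \enquote{$\delta$ then $H$} and performing \enquote{$H$ then $h_\ast\delta$} both take $D$ to $h(D')$; that is, any adjacent pair (detour, isotopy) can be rewritten as (isotopy, detour) with the same endpoints, keeping the same underlying diffeomorphism and isotopy path.

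\emph{Concatenation and sorting.} Two diagram isotopies $H_1\colon\operatorname{Id}\to h_1$ and $H_2\colon\operatorname{Id}\to h_2$ concatenate to a diagram isotopy $\operatorname{Id}\to h_2h_1$: run $t\mapsto H_1(2t)$ on $[0,\tfrac12]$ and $t\mapsto H_2(2t-1)\circ h_1$ on $[\tfrac12,1]$; this is a continuous path of diffeomorphisms from $\operatorname{Id}$ to $h_2h_1$ whose action on diagrams takes $D$ through $h_1(D)$ to $h_2h_1(D)$, i.e.\ the successive actions of $H_1$ and $H_2$. Now start from any sequence $m_1,\dots,m_k$ of diagram isotopies and detour moves realizing the passage from one knot diagram to the other. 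While some detour move is immediately followed by a diagram isotopy, rewrite that adjacent pair by the commutation rule; each rewrite strictly decreases the number of pairs $i<j$ with $m_i$ a detour and $m_j$ an isotopy, so after finitely many rewrites all isotopies precede all detour moves. Concatenating the leading block of isotopies (or inserting the trivial isotopy if it is empty) puts the sequence in the claimed form: one diagram isotopy, then detour moves.

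\emph{Where the difficulty lies.} There is no deep obstacle; the one point deserving care is the naturality invoked in the commutation rule --- that a diffeomorphism of $\Sigma$ carries a totally virtual arc to a totally virtual one and a rel-endpoint homotopy to a rel-endpoint homotopy --- and this is immediate once one notes that the immersion, its double-point set, and the virtual/real labelling are all natural under diffeomorphisms. The only bookkeeping subtlety is that a diagram isotopy remembers a whole path of diffeomorphisms, not just its endpoint; the reparametrizations above are designed to respect this.
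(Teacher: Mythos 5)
Your proof is correct and follows essentially the same route as the paper's: the key step in both is the commutation rule that a detour move followed by a diagram isotopy may be replaced by the isotopy followed by a transported detour move, which you justify exactly as the paper does (a diffeomorphism of $\Sigma$ carries totally virtual arcs to totally virtual arcs and boundary-fixing homotopies to boundary-fixing homotopies). The sorting and concatenation bookkeeping you spell out is simply made implicit in the paper's \enquote{it is enough to show} reduction.
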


\begin{proof}
It is enough to show that a detour move $d$ followed by a diagram isotopy $\operatorname{Id}\rightarrow h$ may be replaced with a diagram isotopy followed by a detour move (without changing the initial and final diagrams). The initial diagram is denoted by $D$.

Call $\alpha$ the totally virtual arc that is moved by the detour move. By definition, $d(\alpha)$ is boundary-fixing homotopic to $\alpha$, and is totally virtual too. Thus, $h\left( d\left( \alpha\right) \right) $ and $h(\alpha)$ are totally virtual and boundary-fixing homotopic to each other. Since $h\left(d\left(D\right) \right) $ and $h(D)$ differ only by these two arcs, it follows that there is a detour move taking $h(D)$ to $h\left( d\left( D\right) \right) $.
\end{proof}

Now an interesting question about diagram isotopies is when two of them lead to diagrams that are equivalent \textit{under detour moves}.  Here is a quite useful sufficient condition.

\begin{definition}\label{def:generalizedbraids}
Let $X$ and $Y$ be two finite subsets of $\Sigma$ with the same (positive) cardinality $n$. A \textit{generalized braid} in $\Sigma\times \left[ 0,1\right]$ based on the sets $X$ and $Y$ is an embedding $\beta$ of a disjoint union of segments, such that $\operatorname{Im}\beta\cap\left( \Sigma\times \left\lbrace t\right\rbrace\right) $ has cardinality $n$ for each $t$, coincides with $X$ at $t=0$ and with $Y$ at $t=1$.
\end{definition}

Let $D$ be a knot diagram and $H$ a diagram isotopy. Let $p_1\in P_1,\ldots ,p_n\in P_n$ denote little neighborhoods of the real crossings of $D$,
and set $\mathcal{P}=\cup P_i$. Then, $\coprod H(p_i,\cdot)$ defines a generalized braid $\prescript{H\!\!}{}{\beta}$ in $\Sigma\times \left[ 0,1\right] $ with $n$ strands based on the sets $\left\lbrace p_1,\ldots ,p_n \right\rbrace$ and $\left\lbrace h(p_1),\ldots ,h(p_n) \right\rbrace$. The strand of a braid $\beta$ that intersects $\Sigma\times \left\lbrace 0\right\rbrace$ at $p_i$ is denoted by $\beta_i$.

\begin{proposition}
\label{prop:braid_homotopy}
Let $D$ and $H$ be as above. Then, up to detour moves, $h(D)$ only depends on $D$ and the boundary fixing homotopy class of $\prescript{H\!\!}{}{\beta}$.
\end{proposition}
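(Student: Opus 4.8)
The plan is to prove the equivalent well‑definedness statement: if $H=(h,(h_t)_{t\in[0,1]})$ and $H'=(h',(h'_t)_{t\in[0,1]})$ are diagram isotopies (so $h_0=h'_0=\operatorname{Id}_\Sigma$) whose braids $\prescript{H\!\!}{}{\beta}$ and $\prescript{H'\!\!}{}{\beta}$ are boundary‑fixing homotopic, then $h(D)$ and $h'(D)$ are related by detour moves. Having a braid at all presupposes that $D$ has at least one real crossing (Definition~\ref{def:generalizedbraids}); when $D$ has none the claim has no content. Such a homotopy forces $h(p_i)=h'(p_i)=:q_i$ for every $i$, so that $\phi:=h'\circ h^{-1}$ fixes each $q_i$ and satisfies $\phi(h(D))=h'(D)$. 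I would split the argument into two ingredients: (i) the homotopy‑theoretic statement that $\phi$ is isotopic to $\operatorname{Id}_\Sigma$ through diffeomorphisms fixing $q_1,\dots,q_n$; and (ii) the geometric statement that for \emph{any} diffeomorphism $\phi$ with this property one has $\phi(h(D))\sim h(D)$ up to detour moves.

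For (i) I would use the Fadell--Neuwirth fibration $\operatorname{ev}\colon\operatorname{Diff}(\Sigma)\to\operatorname{Conf}_n(\Sigma)$, $f\mapsto(f(p_1),\dots,f(p_n))$, into the space of $n$ ordered points of $\Sigma$; it is a fibration by the isotopy extension theorem, with fibre $\operatorname{Diff}(\Sigma;p_1,\dots,p_n)$ over the basepoint $(p_1,\dots,p_n)$, and $\operatorname{ev}\circ H=\prescript{H\!\!}{}{\beta}$, $\operatorname{ev}\circ H'=\prescript{H'\!\!}{}{\beta}$. The loop $\mu(t):=\bigl(h_t^{-1}(h'_t(p_i))\bigr)_{i}$ at the basepoint lifts tautologically to the path $t\mapsto h_t^{-1}\circ h'_t$ in $\operatorname{Diff}(\Sigma)$, which runs from $\operatorname{Id}$ to $h^{-1}h'$; hence in the homotopy exact sequence of the fibration the connecting map sends $[\mu]$ to the class of $h^{-1}h'$ in $\pi_0\operatorname{Diff}(\Sigma;p_1,\dots,p_n)$. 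Two elementary reparametrizations — first deforming $\mu$ rel endpoints into the concatenation of $\prescript{H'\!\!}{}{\beta}$ with the path $a\mapsto(h_a^{-1}(q_i))_{i}$, then deforming that last path rel endpoints (through $a\mapsto(h_{(1-r)a}^{-1}(h_{1-ra}(p_i)))_{i}$) onto the reverse of $\prescript{H\!\!}{}{\beta}$ — identify $[\mu]$ with the class of $\prescript{H'\!\!}{}{\beta}$ followed by the reverse of $\prescript{H\!\!}{}{\beta}$. That is the $\pi_1$‑inverse of the class of $\prescript{H\!\!}{}{\beta}$ followed by the reverse of $\prescript{H'\!\!}{}{\beta}$, which is trivial precisely because the braids are boundary‑fixing homotopic. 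Therefore $h^{-1}h'$, and — conjugating by $h$ — also $\phi$, is isotopic to the identity through diffeomorphisms fixing the marked points. (In the non‑orientable case one first lifts all the diffeomorphisms in sight to the orientable total space $M$, which is possible because they are isotopic to the identity and so pull the line bundle back to itself.)

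For (ii), given an isotopy $(\phi_r)$ from $\operatorname{Id}$ to $\phi$ fixing $q_1,\dots,q_n$, apply it to $h(D)$: the resulting path of knot diagrams keeps its $n$ real crossings pinned at $q_1,\dots,q_n$, hence passes through no Reidemeister move. Fragmenting this isotopy in the time parameter, and then — by the fragmentation lemma for isotopies, relative to the finite set $\{q_1,\dots,q_n\}$ — in space, one writes it as a finite composition of ambient isotopies each supported in a small ball that meets at most one of the $q_i$. A piece supported in a ball disjoint from all the crossings only moves sub‑arcs of the curve carrying no crossing — that is, totally virtual arcs — rel their endpoints, and so is realized by detour moves (such an arc is even allowed to be swept across some $q_j$, remaining totally virtual throughout). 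A piece supported in a ball around a single $q_i$ and fixing $q_i$ is, away from $q_i$, again a composition of detour moves, while near $q_i$ it is an isotopy of the crossing germ, which changes nothing: by Definition~\ref{def:virtualknotdiags} a real crossing is recorded only up to local isotopy, and local isotopy preserves both the over/under datum and the writhe. Chaining these moves gives $h(D)\sim h'(D)$.

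The step I expect to be the main obstacle is ingredient (ii): setting up the relative fragmentation carefully and checking rigorously that an isotopy confined to a neighbourhood of a single real crossing is invisible (in particular that the winding such an isotopy may create in the virtual arcs is genuinely undone by detour moves). Ingredient (i) is short, but the explicit reparametrizations really are needed, since one wants the \emph{precise} value of the boundary class rather than merely its vanishing modulo the image of $\pi_1\operatorname{Diff}(\Sigma)$, whose size one would otherwise have to control.
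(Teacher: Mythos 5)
Your strategy is viable but takes a much longer road than the paper, which disposes of the whole proposition in a few lines: for a maximal totally virtual arc $\gamma$ of $D$ running from $P_i$ to $P_j$, the concatenation $\prescript{H\!\!}{}{\beta}_i^{-1}\gamma\,\prescript{H\!\!}{}{\beta}_j$ is a path in $\Sigma\times[0,1]$ whose class in $\pi_1(\Sigma\times[0,1],h(\mathcal{P})\times\{1\})$ visibly depends only on the boundary-fixing homotopy classes of $\gamma$ and of the braid, and the retraction onto $\Sigma\times\{1\}$ sends that class to $[h(\gamma)]$; since detour equivalence of diagrams with identical crossing data amounts exactly to boundary-fixing homotopy of the connecting totally virtual arcs, this finishes the proof. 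Your ingredient (i) --- the Fadell--Neuwirth fibration argument showing that $\phi=h'\circ h^{-1}$ is isotopic to the identity rel $q_1,\dots,q_n$ --- is correct (the two reparametrizations of $\mu$ are right, and triviality of $[\mu]$ does place the endpoint of the tautological lift in the identity component of the fibre), but it establishes an ambient statement that is stronger than what is needed: the paper only ever tracks the homotopy class of each connecting arc.

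The genuine gap is in ingredient (ii). Fragmentation writes $\phi$ as a composition of diffeomorphisms each supported in a small ball, but gives no control over whether the \emph{individual} pieces, or the intermediate partial compositions, fix the points $q_i$: only the total composition does. A piece whose support meets the current position of a real crossing moves that crossing, which is neither a detour move nor invisible; the crossing may wander along a loop before a later piece brings it back, and that loop is precisely the monodromy the proposition is meant to control (it is what produces $w$-moves in the proof of Theorem~\ref{thm:Th1}). A ``relative fragmentation'' whose pieces each fix all the $q_i$ is not a standard lemma and would itself require proof; and even granting it, a piece supported near $q_i$ and fixing $q_i$ may rotate the crossing germ, which is not undone by detour moves alone, since those only displace totally virtual arcs. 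All of this evaporates if you drop the fragmentation: once ingredient (i) hands you the isotopy $(\phi_r)$ rel $\{q_1,\dots,q_n\}$, observe that for each connecting arc $\gamma'$ of $h(D)$, extended by little arcs so as to run from $q_i$ to $q_j$, the family $\phi_r(\gamma')$ is a boundary-fixing homotopy from $\gamma'$ to $\phi(\gamma')$ between totally virtual arcs --- literally a detour move. I recommend restructuring ingredient (ii) around that observation, or adopting the paper's one-step computation, which renders ingredient (i) unnecessary as well.
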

\begin{proof}
Let $\gamma$ be a maximal smooth arc of $D$ outside $\mathcal{P}$ (thus totally virtual). It begins at some $P_i$ and ends at some $P_j$ (of course it may happen that $j=i$). Using little arcs inside of $P_i$ and $P_j$ to join the endpoints of $\gamma$ with $p_i$ and $p_j$, one obtains an oriented path $\prescript{H\!\!}{}{\beta}_i^{-1}\gamma\prescript{H\!\!}{}{\beta}_j$. 

The obvious retraction of $\Sigma\times\left[ 0,1\right]$ onto $\Sigma\times\left\lbrace 1\right\rbrace$ induces a map
$$\pi_1(\Sigma\times\left[ 0,1\right],h(\mathcal{P})\times\left\lbrace 1\right\rbrace)\longrightarrow \pi_1(\Sigma,h(\mathcal{P}))$$
that sends the class $\left[ \prescript{H\!\!}{}{\beta}_i^{-1}\gamma\prescript{H\!\!}{}{\beta}_j\right]$
to $\left[ h(\gamma)\right]$. Since the former class is unchanged under boundary-fixing homotopy of $\gamma$ and $\prescript{H\!\!}{}{\beta}$, so is the latter, which proves the result.
\end{proof}

This proposition states that the only relevant datum in a diagram isotopy of a virtual knot is the path followed by the real crossings along the isotopy, \textit{up to homotopy}: the entanglement of these paths with each other or themselves does not matter. It follows that the crossings may be moved one at a time:

\begin{corollary}\label{1by1}
Let $D$ be a knot diagram with its real crossings numbered from $1$ to $n$, and let $H:\operatorname{Id}\rightarrow h$ be a diagram isotopy. Then there is a sequence of diagram isotopies $H_1,\ldots,H_n$, such that $h_n\ldots h_1(D)$ coincides with $h(D)$ up to detour moves, and such that $H_i$ is the identity on a neighborhood of each real crossing but the $i$-th one.
\end{corollary}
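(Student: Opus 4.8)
The plan is to reduce the statement to Proposition~\ref{prop:braid_homotopy}, by constructing the $H_i$ so that their composite traces a generalized braid with the same strands as $\prescript{H\!\!}{}{\beta}$, one strand at a time. Write $q_i = h(p_i)$, and let $\beta_1,\ldots,\beta_n$ be the strands of $\prescript{H\!\!}{}{\beta}$; composing with the retraction $\Sigma\times[0,1]\to\Sigma$, regard each $\beta_i$ also as a path in $\Sigma$ from $p_i$ to $q_i$. The proof of Proposition~\ref{prop:braid_homotopy} reconstructs $h(D)$ up to detour moves from $D$ together with the classes $\left[\prescript{H\!\!}{}{\beta}_i^{-1}\gamma\prescript{H\!\!}{}{\beta}_j\right]$, one for each maximal totally virtual arc $\gamma$ of $D$; each such class depends only on the fixed arc $\gamma$ and on the path-homotopy classes in $\Sigma$ of the two strands $\beta_i,\beta_j$. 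Hence it suffices to produce diagram isotopies $H_1,\ldots,H_n$, each the identity near every real crossing but one, such that $H_n\cdots H_1$ sends each crossing to $q_i$ and its $i$-th braid strand is path-homotopic in $\Sigma$ to $\beta_i$: then the proof of Proposition~\ref{prop:braid_homotopy} gives $h_n\cdots h_1(D)=h(D)$ up to detour moves.

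I would build the $H_i$ inductively. After applying $H_1,\ldots,H_{i-1}$, the real crossings other than the $i$-th sit at a finite set $Y_i=\{q_1,\ldots,q_{i-1},p_{i+1},\ldots,p_n\}$, while the $i$-th is still at $p_i$. Since $\Sigma$ is a surface, any path between points of $\Sigma\smallsetminus Y_i$ can be pushed off $Y_i$ by an arbitrarily small homotopy rel endpoints; so I may choose a path $\gamma_i$ from $p_i$ to $q_i$ with interior disjoint from $Y_i$ and homotopic rel endpoints in $\Sigma$ to $\beta_i$ (in the exceptional case $q_i\in Y_i$, route $\gamma_i$ instead to a nearby generic point and absorb the discrepancy by a short isotopy supported in a tiny disk around $q_i$ at the very end — each of these corrective isotopies is again of the required form). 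A thin regular neighborhood $N_i$ of $\gamma_i$ is disjoint from fixed small neighborhoods of the points of $Y_i$, and isotopy extension provides an ambient isotopy $H_i:\operatorname{Id}\to h_i$, supported in $N_i$, that drags the $i$-th crossing along $\gamma_i$ to $q_i$. By construction $H_i$ is the identity near every crossing but the $i$-th.

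For the composite $\tilde H=H_n\cdots H_1$, the $i$-th crossing stays at $p_i$ during $H_1,\ldots,H_{i-1}$, travels along $\gamma_i$ during $H_i$, and stays at $q_i$ afterwards; so the $i$-th strand of $\prescript{\tilde H\!\!}{}{\beta}$ is, up to reparametrization, the concatenation of a constant path at $p_i$, the path $\gamma_i$, and a constant path at $q_i$, hence path-homotopic in $\Sigma$ to $\beta_i$. Since $h_n\cdots h_1$ also sends each crossing to $q_i$, the reduction in the first paragraph applies and yields the claim.

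The steps invoking isotopy extension (realizing ``drag one point along a path'' as an ambient isotopy supported near that path) and general position (pushing a path off a finite set, choosing thin enough neighborhoods) are standard; the only thing that genuinely needs attention is the bookkeeping that prevents the crossing being moved from colliding with the others, which is exactly why the construction moves them in a fixed order and perturbs the target point when $q_i$ happens to coincide with another crossing's current location. I expect this to be the main — though minor — technical obstacle; all the conceptual weight rests on Proposition~\ref{prop:braid_homotopy}, which guarantees that the strands may be treated independently.
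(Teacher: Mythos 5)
Your proposal is correct and follows essentially the same route as the paper: both reduce to Proposition~\ref{prop:braid_homotopy} by replacing $\prescript{H\!\!}{}{\beta}$ with a homotopic generalized braid whose strands move one at a time, and then realizing that braid by a product of diagram isotopies each supported away from all but one crossing. You simply make explicit the general-position and isotopy-extension details that the paper leaves implicit in the sentence ``any diagram isotopy $H'$ with $\beta=\prescript{H'\!\!}{}{\beta}$ factorizes into a product $H_n\ldots H_1$.''
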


\begin{remark}It is to be understood that the $i$-th crossing of $h_k\ldots h_1(D)$ is $h_k\ldots h_1(p_i)$.
\end{remark}

\begin{proof}
Any generalized braid is (boundary-fixing) homotopic to a braid $\beta\subset \Sigma\times\left[ 0,1\right]$ such that the $i$-th strand is vertical before the time $\frac{i-1}{n}$ and vertical again after the time $\frac{i}{n}$. Take such a braid $\beta$ that is homotopic to $\prescript{H\!\!}{}{\beta}$. Any diagram isotopy $H^\prime$ such that $\beta=\prescript{H^\prime\!\!}{}{\beta}$ factorizes into a product $H_n\ldots H_1$ satisfying the last required condition. The fact that $h_n\ldots h_1(D)$ and $h(D)$ coincide up to detour moves is a consequence of Proposition~\ref{prop:braid_homotopy}.
\end{proof}

\section{Virtual knot theory on a weighted group}\label{sec:VKTG}

In this section, we define a new Gauss diagram theory, that depends on an arbitrary group $\pi$ and a homomorphism $w:\pi\rightarrow\FF_2\simeq\ZZ/2\ZZ$. These two data together are called a \textit{weighted group}. When $(\pi,w)$ is the weighted fundamental group of a surface (Definition~\ref{def:wfg}), this theory encodes, fully and faithfully, virtual knot diagrams on that surface (Definition~\ref{def:virtualknotdiags}). 
\subsection{General settings and the main theorem}

\begin{definition}\label{def:GammaRm}
Let $\pi$ be an arbitrary group and $w$ a homomorphism from $\pi$ to $\FF_2.$ A \textit{Gauss diagram on $\pi$} is a classical Gauss diagram decorated with

\begin{itemize}
\item an element of $\pi$ on each edge if the diagram has at least one arrow.
\item a single element of $\pi$ \textit{up to conjugacy} if the diagram is empty.\end{itemize}

Such diagrams are subject to the usual types of R-moves, plus an additional \textit{conjugacy move}, or \textit{$w$-move} -- the dependence on $w$ arises only there. An equivalence class modulo all these moves is called a \textit{virtual knot type on $(\pi,w)$}.

A \textit{subdiagram} of a Gauss diagram on $\pi$ is the result of removing some of its arrows. Removing an arrow involves a merging of its ($2$, $3$, or $4$) adjacent edges, and each edge resulting from this merging should be marked with the product in $\pi$ of the former markings. If all the arrows have been removed, this product is not well-defined, but its conjugacy class is. 
\end{definition}

The notion of subdiagrams is useful to construct finite-type invariants (see Section~\ref{FTI}), but it already allows explicit understanding of
\begin{enumerate}
\item The distinction between empty and non empty diagrams in the definition above.
\item The \enquote{merge $\rightsquigarrow$ multiply} principle, which is omnipresent, in particular in R-moves.
\end{enumerate}

An \textbf{$\mathrm{R}_1$-move} is the local addition or removal of an isolated arrow, surrounding an edge marked with the unit $1\in\pi$. The markings of the affected edges must satisfy the rule indicated on Fig.\ref{pic:GammaRm} (top-left). There are no conditions on the decorations of the arrows.

\textit{Exceptional case:} If the isolated arrow is the only one in the diagram on the left, then the markings $a$ and $b$ on the picture actually correspond to the same edge, and the diagram on the right, with no arrow, must be decorated by $\left[ a\right] $, the conjugacy class of $a$.

\begin{figure}[h!]
\centering 
\psfig{file=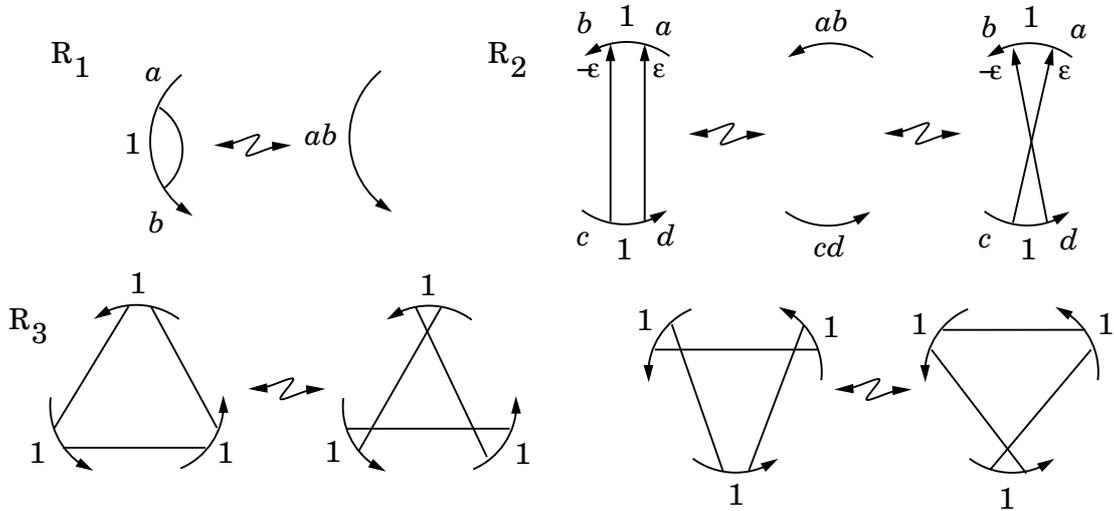
,scale=0.75}
\caption{The $\mathrm{R}$-moves for Gauss diagrams on a group -- the exceptional cases and the rules for the missing decorations are made precise in Definition~\ref{def:GammaRm}.}\label{pic:GammaRm}
\end{figure}

\begin{figure}[h!]
\centering 
\psfig{file=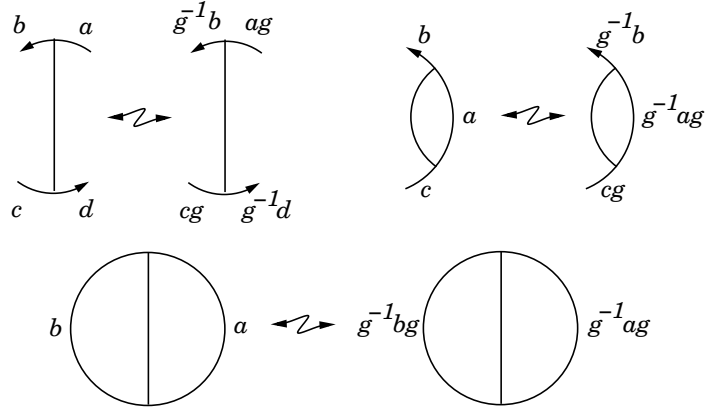
,scale=0.7}
\caption{The general conjugacy move (top-left) and its two exceptional cases -- in every case the orientation of the arrow switches if and only if $w(g)=-1$.}\label{pic:conjugacy}
\end{figure}

An \textbf{$\mathrm{R}_2$-move} is the addition or removal of two arrows with opposite writhes and matching orientations as shown on Fig.\ref{pic:GammaRm} (top-right). The surrounded edges must be decorated with $1$, and the \enquote{merge $\rightsquigarrow$ multiply} rule should be satisfied.\newline
\textit{Exceptional case of type 1:} If the markings $a$ and $d$ (\resp $b$ and $c$) correspond to the same edge, then the resulting marking shall be $cab$ (\resp $abd$).\newline
\textit{Exceptional case of type 2:} If the middle diagram contains no arrow at all, \ie $a$ and $d$ match and so do $b$ and $c$, then the (only) marking of the middle diagram shall be $\left[ ab\right] $.

An \textbf{$\mathrm{R}_3$-move} may be of the two types shown on Fig.\ref{pic:GammaRm} (bottom left and right). The surrounded edges must be decorated by $1$, the value of $\mathrm{w}(\cdot)\varepsilon(\cdot)$ must be the same for all three of them, and the values of $\uparrow\!\!(\cdot)$ must be pairwise distinct (see Definition~\ref{def_epsilon}).

A \textbf{conjugacy move} depends on an element $g\in\pi$. It changes the markings of the adjacent edges to an arbitrary arrow as indicated on Fig.\ref{pic:conjugacy}. Besides, if $w(g)=-1$ then the orientation of the arrow is reversed -- though its sign remains the same.

\begin{remark}
By composing R-moves and $w$-moves, it is possible to perform \textit{generalized moves}, which look like R-moves but depend on $w$. Fig.\ref{pic:generalizedGamma} shows some of them.
\end{remark}

\begin{figure}[h!]
\centering 
\psfig{file=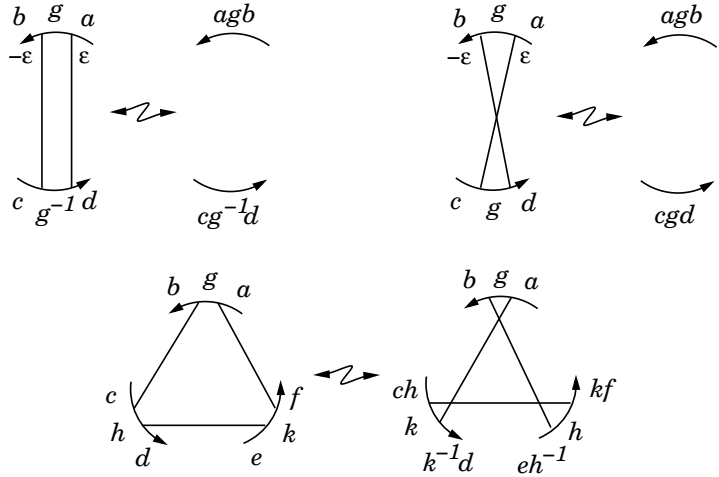
,scale=0.65}
\caption{Some generalized moves -- for the $\mathrm{R}_3$ picture, it is assumed that $ghk=1$. \textbf{Warning:} the rules for the arrow orientations here depend on the value of $w(g)$.}\label{pic:generalizedGamma}
\end{figure}

\begin{theorem}\label{thm:Th1}
Let $(\Sigma,x)$ be an arbitrary surface with a base point, and denote by $(\pi,w)$ the weighted fundamental group of $(\Sigma,x)$ (see Definition~\ref{def:wfg}). There is a $1-1$ correspondence $\Phi$ between Gauss diagrams on $\pi$ up to R-moves and $w$-moves (\ie virtual knot types on $(\pi,w)$), and virtual knot diagrams on $\Sigma$ up to diagram isotopy, Reidemeister moves and detour moves \mbox{(\ie virtual knot types on $\Sigma$).}
\end{theorem}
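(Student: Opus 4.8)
The plan is to realise $\Phi$ as an explicit assignment from knot diagrams to Gauss diagrams, to show it descends to equivalence classes, and then to prove bijectivity through a realisation procedure together with a faithfulness statement. Fix once and for all an orientation of the total space $M$ of the thickening of $\Sigma$, which makes writhes well defined (see \cite{FiedlerSSS}), and an orientation of the fibre of $M\to\Sigma$ over $x$. Given a virtual knot diagram $D$ on $\Sigma$, choose an auxiliary \emph{combing}: a path $\delta$ from $x$ to a fixed non-crossing point of the underlying circle, extended by travelling along the circle, so that every point of the immersed curve --- in particular every real crossing --- comes with a preferred homotopy class of path to $x$. Then the underlying classical Gauss diagram of $\Phi(D)$ records the real crossings in cyclic order; their writhes are read off from the orientation of $M$; their arrow orientations are read off from the fibre orientation at $x$ transported along $\delta$ to each crossing; and the edge lying between consecutive crossing-ends $q_{i-1},q_i$ is decorated by the class in $\pi=\pi_1(\Sigma,x)$ of the loop $\delta_{i-1}\cdot\gamma_i\cdot\bar\delta_i$, where $\gamma_i$ is the corresponding arc of $D$ and $\delta_j$ is the combing path to $q_j$. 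If $D$ has no real crossing one records instead the free homotopy class of the immersed circle, a well-defined conjugacy class in $\pi$, matching the exceptional case of Definition~\ref{def:GammaRm}.

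The next step is to check that $\Phi$ descends to virtual knot types. The telescoping identity $(\delta_{i-1}\gamma_i\bar\delta_i)(\delta_i\gamma_{i+1}\bar\delta_{i+1})=\delta_{i-1}(\gamma_i\gamma_{i+1})\bar\delta_{i+1}$ gives the \enquote{merge $\rightsquigarrow$ multiply} rule for edge decorations; a Reidemeister move on $D$, being supported in an embedded disc, then induces the corresponding R-move on $\Phi(D)$, with the \enquote{surrounded} edges carrying the unit because the disc is simply connected, with the R$_3$ conditions on $\mathrm w(\cdot)\varepsilon(\cdot)$ and $\uparrow\!\!(\cdot)$ of Definition~\ref{def_epsilon} holding because the three strands fit in the disc without virtual crossings, and with the R$_1$ exceptional case reproducing the conjugacy-class rule. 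A detour move leaves $\Phi(D)$ unchanged, since by definition it replaces a totally virtual arc by another one homotopic to it rel endpoints, affecting neither the classical Gauss diagram nor the decorations. For diagram isotopies, Lemma~\ref{lem:iso_detour}, Proposition~\ref{prop:braid_homotopy} and Corollary~\ref{1by1} reduce matters to moving a single real crossing along a path $c$; transporting the combing along the isotopy shows $\Phi$ is unchanged, whereas keeping the old combing conjugates by $[c]$ the two edges adjacent to that crossing and flips its arrow orientation exactly when the fibre orientation is not preserved along $c$, that is --- by Definition-Lemma~\ref{def:wfg} --- exactly when $w([c])=-1$. Hence changing the combing changes $\Phi(D)$ only by $w$-moves, so $\Phi$ is a well-defined map from virtual knot types on $\Sigma$ to virtual knot types on $(\pi,w)$.

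For bijectivity one argues surjectivity and injectivity separately. For surjectivity (realisation), given a Gauss diagram on $\pi$ one immerses its underlying tetravalent graph generically in $\Sigma$: place the real crossings as little discs carrying the local data (handedness giving the prescribed writhe, choice of over-branch giving the prescribed arrow orientation relative to the transported fibre orientation), and join the successive arc-ends by arcs realising the prescribed homotopy classes rel endpoints, declaring all new intersections virtual; this is possible because an arc in a surface realises any homotopy class rel endpoints and the total class around $\SS^1$ is unconstrained, and with a compatible combing $\Phi$ of this diagram is the given one. For injectivity (faithfulness), suppose $\Phi(D)$ and $\Phi(D')$ are related by R-moves and $w$-moves. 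If they differ only by $w$-moves, the combing of $D$ may be changed so that $\Phi(D)=\Phi(D')$ on the nose; then the real crossings biject with equal writhes and over/under data, so by Corollary~\ref{1by1} and the isotopy extension theorem a diagram isotopy of $D$ carries its real crossings, as little discs in $M$, onto those of $D'$, and choosing the two combings to agree there, equality of edge decorations forces each totally virtual arc of $D$ to be homotopic rel endpoints to the corresponding one of $D'$ --- hence they differ by detour moves. An R-move $G\to G'$ with $\Phi(D)=G$ can in turn be performed, after a detour move bringing the relevant arcs into an embedded disc, as a genuine Reidemeister move on $D$ ---the conditions of Definition~\ref{def:GammaRm} (edges marked $1$, and the $\mathrm w\varepsilon$ and $\uparrow$ conditions for R$_3$) being exactly what lets this be done without virtual crossings--- producing a $D'$ with $\Phi(D')=G'$. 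Combining, $\Phi$ is injective, hence bijective; the realisation procedure supplies an explicit inverse.

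The main obstacle I expect is the interplay between the arrow orientations and the possible non-triviality of $M\to\Sigma$: the entire correspondence between \enquote{change of combing} on the diagram side and \enquote{conjugacy move} on the Gauss-diagram side --- the orientation twist included --- rests on identifying the monodromy of the transported fibre orientation with the class $w_1(\Sigma)=w$ of Definition-Lemma~\ref{def:wfg}. The remaining difficulty is bookkeeping, concentrated in the faithfulness step, where one must coordinate the isotopy aligning the real crossings with a compatible choice of combings for $D$ and $D'$ before reading off the homotopy classes of the totally virtual arcs.
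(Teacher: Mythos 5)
Your proposal is correct and follows the same overall architecture as the paper's proof: define $\Phi$ on diagrams via an auxiliary choice, show that changing that choice produces exactly the $w$-moves (through the identification of the monodromy of the fibre orientation with $w_1(\Sigma)$ from Definition-Lemma~\ref{def:wfg}), match Reidemeister moves with R-moves, and invert by a realisation map together with a faithfulness statement. Two localized differences are worth recording. First, where the paper fixes a base disc $X\ni x$ and isotopes all real crossings into $X$ (so that the arcs between crossings directly define elements of $\pi_1(\Sigma,X)$), you keep the crossings in place and introduce a combing of basepoint paths. The devices are interchangeable, but with your combing the assertion that the \enquote{surrounded} edges of a Reidemeister configuration are marked by $1$ is not automatic: for an $\mathrm{R}_2$ or $\mathrm{R}_3$ move the short arcs join \emph{distinct} crossings, and if the combing paths to those crossings enter the supporting disc incoherently, the surrounded edges carry nontrivial elements and one only obtains a generalized move as in Fig.\ref{pic:generalizedGamma}. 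You should therefore say explicitly that the combing is first adjusted, using the $w$-move freedom you have already established, so that all paths to the crossings involved agree up to the supporting disc --- this is precisely the role played in the paper by requiring the Reidemeister move to happen inside $X$. Second, for the invariance of the inverse map under $w$-moves the paper uses the railway trick (a detour move followed by mixed $\mathrm{R}_3$ moves sliding the real crossing along a virtual four-lane track, Fig.\ref{pic:wmove}), whereas you absorb the $w$-move into a change of combing and then invoke faithfulness; your route is a little more economical and is available exactly because your $\Phi$ is defined for crossings in arbitrary position. Neither difference affects correctness.
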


\begin{proof}
Fix a subset $X$ of $\Sigma$ homeomorphic to a closed $2$-dimensional disc and containing the base point $x$ -- so that $\pi=\pi_1(\Sigma,X)$. Also, $X$ being contractible allows one to fix a trivialization of the thickening of $\Sigma$ over $X$: this gives meaning to the locally \textit{over} and \textit{under} branches when a knot diagram has a real crossing in $X$.\newline

\textbf{Construction of the bijection.} Pick a knot diagram $D\in \Sigma$ and assume that  every real crossing of $D$ lies over $X$. Then $D$ defines a Gauss diagram on $\pi$, denoted by $\varphi (D)$: the signs of the arrows are given by the writhes, their orientation is defined by the trivialization of $M\rightarrow\Sigma$ over $X$, and each edge is decorated by the class in $\pi$ of the corresponding arc in $D$. This defines $\varphi (D)$ without ambiguity if $D$ has at least one real crossing. If it does not, then define $\varphi (D)$ as a Gauss diagram without arrows, decorated with the conjugacy class corresponding to the free homotopy class of $D$. Finally, put
$$\Phi (D):=\left[ \varphi (D)\right] \text{ $\operatorname{mod}$ R-moves and $w$-moves.}$$

\textbf{Invariance of $\Phi$ under diagram isotopy and detour moves.} It is clear from the definitions that $\varphi(D)$ is strictly unchanged under detour moves on $D$. Now assume that $D_1$ and $D_2$ are equivalent under \textit{usual} diagram isotopy -- that is, diagram isotopy that may take real crossings out of $X$ for some time. By Corollary~\ref{1by1}, it is enough to understand what happens for a diagram isotopy along which only one crossing goes out of $X$. In that case, $\varphi (D)$ is changed by a $w$-move performed on the arrow corresponding to that crossing, where the conjugating element $g$ is the loop followed by the crossing along the isotopy. Indeed, since the first Stiefel-Whitney class of the thickening of $\Sigma$ coincides with that of its tangent bundle, it follows that:
\begin{enumerate}
\item The orientation of the fibre (and thus the notions of \enquote{over} and \enquote{under}) is reversed along $g$ if and only if $w(g)=-1$, which actually corresponds to the rule for arrow orientations in a $w$-move.
\item The orientation of the fibre over the crossing is reversed along $g$ if and only if a given local orientation of $\Sigma$ is reversed along $g$, so that the writhe of the crossing never changes.
\end{enumerate}

\textbf{Invariance of $\Phi$ under Reidemeister moves.} Up to conjugacy by a diagram isotopy, it can always be assumed that a Reidemeister move happens inside $X$. In that case, at the level of $\varphi (D)$, it clearly corresponds to an R-move as described in Definition~\ref{def:GammaRm}.\newline

So far, $\Phi$ is a well-defined map from the set of virtual knot types on $\Sigma$ to the set of virtual knot types on $(\pi,w)$.

\textbf{Construction of an inverse map $\Psi$.} If $G$ is a Gauss diagram without arrows, then define $\psi (G)$ as the totally virtual knot with free homotopy class equal to the marking of $G$ -- it is well-defined up to detour moves. If $G$ has arrows, then for each of them draw a crossing inside $X$ with the required writhe, and then join these by totally virtual arcs with the required homotopy classes. The resulting diagram $\psi (G)$ is well-defined up to diagram isotopy and detour moves by this construction. In both cases, put
$$\Psi (D):=\text{virtual knot type of $\psi (D)$}.$$

Let us prove that $\varphi$ and $\psi$ are inverse maps, so that $\Psi$ will be the inverse of $\Phi$ as soon as it is invariant under R-moves and $w$-moves.

It is clear from the definitions that $\varphi\circ\psi$ coincides with the identity. It is also clear that $\psi\circ\varphi$ is the identity, up to detour moves, for \textit{totally virtual} knot diagrams. 

Now fix a knot diagram $D$ with at least one real crossing (and all real crossings inside $X$). Recall that $\psi\circ\varphi (D)$ is defined up to diagram isotopy and detour moves, so fix a diagram $D^\prime$ in that class. There is a natural correspondence between the set of real crossings of $D$ and those of $D^\prime$, due to the fact that both identify by construction with the set of arrows of 
$\varphi (D)$. Pick a diagram isotopy $h$ that takes each real crossing of $D$ to meet its match in $D^\prime$, \textit{without leaving $X$}. Then clearly $\varphi (h(D))=\varphi (D)$, and because $\varphi\circ\psi$ is the identity, one gets
\begin{equation}\label{proofmain}
\varphi(h(D))=\varphi(D^\prime).
\end{equation}
The choice of $h$ ensures that $h(D)$ and $D^\prime$ differ only by totally virtual arcs, and \eqref{proofmain} implies that each of these, in $h(D)$, has the same class in $\pi_1(\Sigma,X)$ as its match in $D^\prime$, which means by definition that $h(D)$ and $D^\prime$ are equivalent up to detour moves. Thus $\psi\circ\varphi$ is the identity up to diagram isotopy and detour moves.\newline

\textbf{Invariance of $\Psi$ under R-moves.}
Let us treat only the case of $\mathrm{R}_2$-moves, which contains all the ideas. Let $G_1$ and $G_2$ differ by an $\mathrm{R}_2$-move, and assume that $G_1$ is the one with more arrows. By appropriate diagram isotopy and detour moves \textit{inside} $X$, performed on $\psi(G_1)$, it is possible to make the two concerned crossings \enquote{face} each other, as in Fig.\ref{pic:R2} (left). The paths $\alpha_1$ and $\alpha_2$  from this picture are totally virtual and  trivial in $\pi_1 (\Sigma,X)$, thus $\psi(G_1)$ is equivalent to the second diagram of Fig.\ref{pic:R2} up to detour moves. The fact that at this point, an R-II move is actually possible is a consequence of (in fact equivalent to) the combinatorial conditions defining the R-moves. Denote by $D$ the third diagram of the picture. The \enquote{merge $\rightsquigarrow$ multiply} principle that rules $\mathrm{R}_2$-moves implies that $\varphi(D)=G_2$, so that
\begin{equation}\label{proofmain2}
\psi(G_1)\sim D\sim\psi\circ
\varphi(D)=\psi(G_2),\end{equation}
where $\sim$ is the equivalence under diagram isotopy, detour moves and Reidemeister moves. It follows that $\psi(G_1)$ and $\psi(G_2)$ have the same knot type.
 
\begin{figure}[h!]
\centering 
\psfig{file=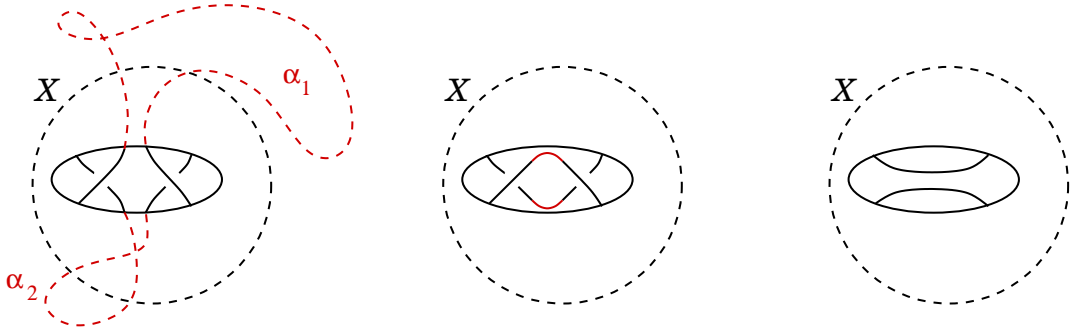}
\caption{$\mathrm{R}_2$-moves actually correspond to Reidemeister moves}\label{pic:R2}
\end{figure}

\begin{figure}[h!]
\centering 
\psfig{file=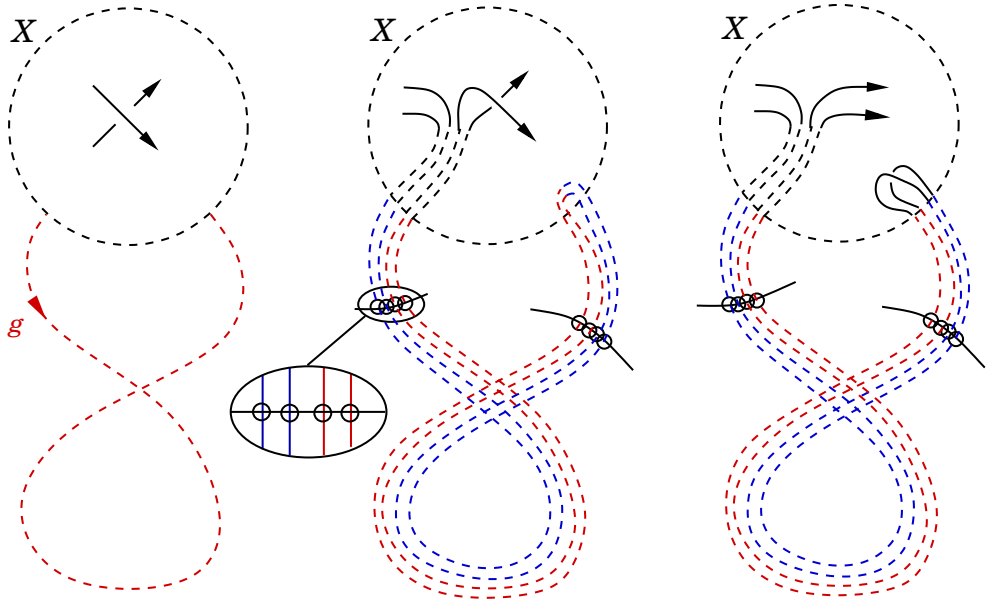}
\caption{Performing a $w$-move -- the railway trick}\label{pic:wmove}
\end{figure}

\textbf{Invariance of $\Psi$ under $w$-moves.} Let $G_1$ and $G_2$ differ by a $w$-move on $g\in\pi$. Call $c$ the corresponding crossing on the diagram $\psi(G_1)$. Then, pick two little arcs right before $c$, one on each branch, and make them follow $g$ by a detour move. At the end, one shall see a totally virtual $4$-lane railway as pictured on Fig.\ref{pic:wmove} (middle): the strands are made parallel, \ie any (virtual) crossing met by either of them is part of a larger picture as indicated by the zoom. This ensures that, using the mixed version of Reidemeister III moves, one can slide the real crossing all along the red part of the railway, ending with the diagram on the right of the picture -- let us call it $D$. The conclusion is identical to that for R-moves: again $\varphi(D)=G_2$ and  \eqref{proofmain2} holds, whence $\psi(G_1)$ and $\psi(G_2)$ have the same knot type.

\end{proof}

\subsubsection{About the orbits of $w$-moves}\label{orbits}

It could feel natural to try to get rid of $w$-moves by understanding their orbits in a synthetic combinatorial way. This is what is done in Section~\ref{subsec:abelian} in the particular case of an abelian group $\pi$ endowed with the trivial homomorphism $\pi\rightarrow\FF_2$.

In general, for a Gauss diagram on $\pi$, $G$, denote by $h_1(G)$ the \textit{set} of free homotopy classes of loops in the underlying topological space of $G$ (it is the set of conjugacy classes in a free group on $deg(G)+1$ generators). Also, denote by $h_1(\pi)$ the set of conjugacy classes in $\pi$. Then the $\pi$-markings of $G$ define a map
$$F_G:h_1(G)\rightarrow h_1(\pi).$$
Observe that the map $G\mapsto F_G$ is invariant under $w$-moves. This raises a number of questions that amout to technical group theoretic problems, and which will not be answered here ($G^w$ denotes the orbit of $G$ under $w$-moves):

\begin{enumerate}
\item Is the map $G^w\mapsto F_G$ injective?
\item If the answer to $1.$ is yes, then is $G^w$ determined by a finite number of values of $F_G$, for instance its values on the free homotopy classes of \textit{simple loops}?
\item Is it possible to detect in a simple manner what maps $h_1(G)\rightarrow h_1(\pi)$ lie in the image of $G^w\mapsto F_G$?
\end{enumerate}

\begin{remark}
Gauss diagrams with decorations in $h_1(\Sigma)$ can be met for example in \cite{Grishanov}, where they are used to construct knot invariants in a thickened oriented surface $\Sigma$ -- see also Section~\ref{sec:Grishanov}. If the answer to Question $1.$ above is no, then such invariants, which factor through $F_G$, stand no chance to be complete.
\end{remark}

\begin{remark}Even for diagrams with only one arrow, it still does not seem easy to answer the \enquote{simple loop} version of Question $2.$ Given $x,y,h,k$ in a finite type free group, is it true that $$\begin{array}{cccc}
hxh^{-1}kyk^{-1}=xy
& \Longrightarrow & \exists l, \,  \left\lbrace \begin{array}{ccc}
hxh^{-1} & = & lxl^{-1} \\ kyk^{-1} & = & lyl^{-1}
\end{array}\right. & ?\end{array}$$
\end{remark}

Let us end with an example that shows that the values of $F_G$ on the (finite) set of simple loops running along at most one arrow is not enough (cf. Question $2.$). Fig.\ref{exWP1} shows a Gauss diagram with such decorations -- $\left\lbrace a,b \right\rbrace$ is a set of generators for the free group $\pi_1(\Sigma)\simeq \FF (a,b)$, where $\Sigma$ is a $2$-punctured disc.  These particular values of $F_G$ do not determine the free homotopy class of the red loop $\gamma$, as it is shown in Fig.\ref{exWP2}.

\begin{figure}[h!]
\centering 
\psfig{file=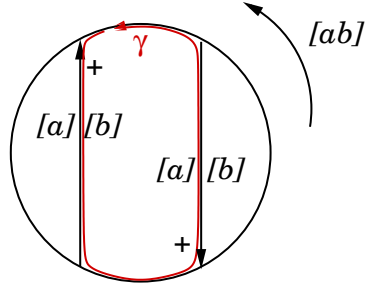,scale=0.78}
\caption{A Gauss diagram with $h_1$-decorations that does not define a unique virtual knot}\label{exWP1}
\end{figure}

In fact, these two virtual knots are even distinguished by Vassiliev-Grishanov's planar chain invariants, which means they represent different virtual knot types.

\begin{figure}[h!]
\centering 
\psfig{file=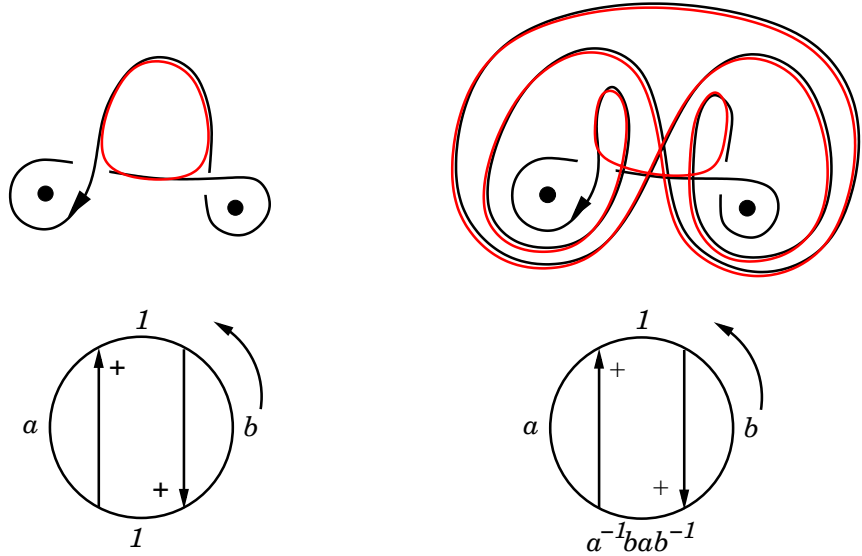,scale=1}
\caption{One red loop is trivial, while the other is a commutator}\label{exWP2}
\end{figure}

\subsection{Abelian Gauss diagrams}
\label{subsec:abelian}

In this subsection, $\pi$ is assumed to be abelian, and $w_0$ denotes the trivial homomorphism $\pi\rightarrow\FF_2$. We describe a version of Gauss diagrams that carries as much information as the previously introduced virtual knot types on $(\pi,w_0)$, with two improvements:
\begin{itemize}
\item The diagrams are made of less data than in the general version.
\item This version is free from conjugacy moves.
\end{itemize}
It is inspired from the decorated diagrams introduced by T. Fiedler to study combinatorial invariants for knots in thickened surfaces (see \cite{Fiedler,Fiedlerbraids} and also \cite{MortierPolyakEquations}). 

We use the same notation $G$ for a Gauss diagram and its underlying topological space, which has a $1$-dimensional complex structure with edges and arrows as oriented $1$-cells. $H_1(G)$ denotes its first integral homology group.

\begin{deflemma}[fundamental loops]\label{def:distinguished}
Let $G$ be a classical Gauss diagram of degree $n$. There are exactly $n+1$ simple loops in $G$ respecting the local orientations of edges and arrows, and going along at most one arrow. They are called the \textit{fundamental loops} of $G$ and their homology classes form a basis of $H_1(G)$.
\end{deflemma}

\begin{definition}[abelian Gauss diagram]\label{def:ab}
Let $\pi$ be an abelian group. An \textit{abelian Gauss diagram on $\pi$} is a classical Gauss diagram $G$ decorated with a group homomorphism $\mu:H_1(G)\rightarrow\pi$. It is usually represented by its values on the basis of fundamental loops, that is, one decoration in $\pi$ for each arrow, and one for the base circle -- that last one is called the \textit{global marking of $G$}.

A Gauss diagram on $\pi$ determines an abelian Gauss diagram as follows:
\begin{itemize}
\item The underlying classical Gauss diagram is the same.
\item Each fundamental loop is decorated by the sum of the markings of the edges that it meets (see Fig~\ref{pic:ab}).
\end{itemize}

\begin{figure}[h!]
\centering 
\psfig{file=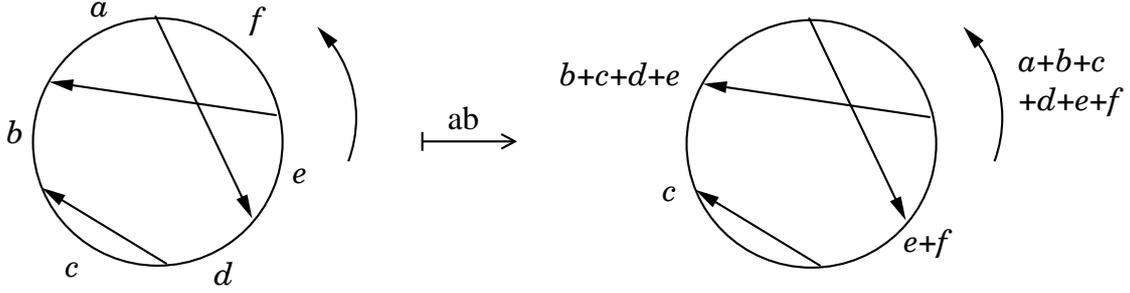,scale=0.85}
\caption{Abelianizing a Gauss diagram on an abelian group}\label{pic:ab}
\end{figure}

This defines an \textit{abelianization map} $\ab$.

\end{definition}

\begin{proposition}\label{prop:abelian}
The map $\ab$ induces a natural $1-1$ correspondence between abelian Gauss diagrams on $\pi$ and equivalence classes of Gauss diagrams on $\pi$ up to $w_0$-moves. Moreover, if $\pi=\pi_1(\Sigma)$ is the fundamental group of a surface, then these sets are in $1-1$ correspondence with the set of virtual knot diagrams on $\Sigma$ up to diagram isotopy and detour moves. 
\end{proposition}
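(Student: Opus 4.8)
The plan is to reduce everything to a fixed underlying classical Gauss diagram $G$ of degree $n$, with edge set $E$ (so $\#E=2n$), and to view the objects involved as abelian groups: the Gauss diagrams on $\pi$ with underlying diagram $G$ form the group $\pi^{E}$ of $\pi$-markings, while the abelian Gauss diagrams on $\pi$ with underlying diagram $G$ form the group $\operatorname{Hom}(H_1(G),\pi)$ --- this is exactly the content of Definition~\ref{def:ab} together with Definition-Lemma~\ref{def:distinguished}, the values on the fundamental loops serving as coordinates. Under these identifications, $\ab$ sends a marking $\phi$ to the homomorphism whose value on a fundamental loop is the \emph{sum} of the $\phi$-markings of the edges it runs through (arrows carrying no marking); this is well defined precisely because $\pi$ is abelian, so the cyclic order in which those edges are met is irrelevant (cf.\ Fig.~\ref{pic:ab}). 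Since $\ab(\phi+\phi')=\ab(\phi)+\ab(\phi')$, the map $\ab\colon\pi^{E}\to\operatorname{Hom}(H_1(G),\pi)$ is a homomorphism of abelian groups, and the first assertion of the proposition becomes the statement that the $w_0$-orbits in $\pi^{E}$ are exactly the cosets of $\ker\ab$.

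Next I would record the effect of a conjugacy move. With $w=w_0$ the arrow's orientation is never reversed, so a $w_0$-move on the arrow $A_j$ with parameter $g\in\pi$ replaces $\phi$ by $\phi+\delta_j^{g}$, where (reading off Fig.~\ref{pic:conjugacy}) $\delta_j^{g}$ is the marking equal to $+g$ on each edge leaving an endpoint of $A_j$ and to $-g$ on each edge entering an endpoint of $A_j$, with the obvious cancellations in the exceptional configurations (e.g.\ an edge that both leaves one endpoint of $A_j$ and enters the other is left untouched). A short verification --- a fundamental loop, at each endpoint of $A_j$, either runs along $A_j$ itself (only the loop $\ell_j$ does, contributing $+g-g=0$) or passes through the two edges at that endpoint, entering one and leaving the other (contributing $-g+g=0$) --- shows $\delta_j^{g}\in\ker\ab$. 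Hence $\ab$ is $w_0$-invariant and descends to $\overline{\ab}$, the $w_0$-orbits lie inside the cosets of $\ker\ab$, and --- since $\delta_j^{g}$ depends only on $(j,g)$ --- the $w_0$-moves generate exactly the subgroup $\langle\delta_j^{g}\colon 1\le j\le n,\ g\in\pi\rangle\subseteq\ker\ab$. For surjectivity of $\overline{\ab}$ (equivalently of $\ab$), contract the arrows: they form a forest, so $G$ collapses onto the graph $\bar G$ with one vertex per arrow and one edge per element of $E$ (the edge $e$ joining the arrows at its two endpoints), with $H_1(G)\cong H_1(\bar G)$ and each fundamental loop corresponding to a cycle of $\bar G$ on the same edges; then $\ab$ is identified with the restriction map $\pi^{E}=\operatorname{Hom}(C_1(\bar G),\pi)\to\operatorname{Hom}(Z_1(\bar G),\pi)=\operatorname{Hom}(H_1(\bar G),\pi)$, which is onto because $Z_1(\bar G)$ is a direct summand of the free abelian group $C_1(\bar G)$.

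The heart of the matter is injectivity of $\overline{\ab}$, i.e.\ the reverse inclusion $\ker\ab\subseteq\langle\delta_j^{g}\rangle$: given $\chi\in\ker\ab$ (think of $\chi=\phi'-\phi$ for two markings with the same $\ab$) one must write $\chi=\sum_{j}\delta_j^{g_j}$ for suitable $g_j\in\pi$. Evaluating edge by edge, and writing $\lambda(e)$ (resp.\ $\rho(e)$) for the arrow having an endpoint at the tail (resp.\ head) of the oriented edge $e$, this is the system $\chi(e)=g_{\lambda(e)}-g_{\rho(e)}$ over $e\in E$; that is, $\chi$ should be the coboundary of a $0$-cochain $g$ on the graph $\bar G$, which is connected because the base circle is a closed walk meeting every vertex. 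Such a $\chi$ is a coboundary if and only if it pairs trivially with every $1$-cycle of $\bar G$, i.e.\ if and only if the edge-sum of $\chi$ around every fundamental loop of $G$ vanishes --- which is precisely $\chi\in\ker\ab$. Translating a solution back, $\phi'=\phi+\sum_j\delta_j^{g_j}$ exhibits $\phi'$ as obtained from $\phi$ by the $w_0$-moves on $A_1,\dots,A_n$ with parameters $g_1,\dots,g_n$, which finishes the first half. I expect the exceptional configurations of Fig.~\ref{pic:conjugacy} to be the only genuine nuisance: one must check that the degenerate forms of $\delta_j^{g}$ still obey $\sum_i\delta_i^{g_i}(e)=g_{\lambda(e)}-g_{\rho(e)}$ with the convention that a loop-edge of $\bar G$ (i.e.\ $\lambda(e)=\rho(e)$) contributes $0$, a brief case check paralleling the exceptional cases of Definition~\ref{def:GammaRm}.

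For the final assertion, suppose $\pi=\pi_1(\Sigma)$ (so $\Sigma$ is orientable and $w_1(\Sigma)=w_0$). The proof of Theorem~\ref{thm:Th1} in fact establishes more than its statement: the assignment $D\mapsto\varphi(D)$ is strictly invariant under detour moves, is altered only by $w_0$-moves under diagram isotopy, and is invertible up to diagram isotopy and detour moves. Hence it already induces a $1$--$1$ correspondence between virtual knot diagrams on $\Sigma$ up to diagram isotopy and detour moves and Gauss diagrams on $\pi$ up to $w_0$-moves; composing with $\overline{\ab}$ gives the asserted correspondence with abelian Gauss diagrams on $\pi$, naturality being clear from the constructions.
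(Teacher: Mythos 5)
Your proof is correct, and for the main (first) assertion it takes a genuinely different route from the paper. The paper argues by induction on the number of arrows: it removes an arrow $\alpha$, applies the induction hypothesis to match all markings away from $\alpha$, and then uses the linear relations forced by $\ab(G_1)=\ab(G_2)$ to exhibit a single correcting $w_0$-move on $\alpha$, with a case distinction according to whether $\alpha$ crosses another arrow (this is where the degenerate edge configurations are handled). You instead linearize globally: markings form $\pi^{E}$, $w_0$-moves are the coboundaries $\delta_j^{g}$ on the graph $\bar G$ obtained by contracting the arrows, and $\ker\ab$ is the annihilator of $Z_1(\bar G)$, so the statement reduces to $B^1=Z_1^{\perp}$ for a connected graph (which holds because $B_0$ is a direct summand of the free group $C_0$). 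Your approach buys three things: it dispenses with the induction and the crossing/non-crossing case split, it treats the exceptional configurations of Fig.\ref{pic:conjugacy} uniformly as loop-edges of $\bar G$, and it makes surjectivity of $\ab$ explicit (via $Z_1$ being a direct summand of $C_1$), a point the paper's proof passes over silently. The paper's induction, on the other hand, is more elementary and stays closer to the combinatorics of Figs.\ref{pic:proofab}--\ref{pic:proofabis}. For the second assertion both arguments are the same appeal to the proof of Theorem~\ref{thm:Th1}; just note that your phrase \enquote{altered only by $w_0$-moves under diagram isotopy} must be read in both directions --- every $w_0$-move on $\varphi(D)$ is realized by an isotopy of $D$ (equivalently, by the railway trick of Fig.\ref{pic:wmove}) --- since that converse is what makes $\psi$ descend to the quotient by $w_0$-moves and hence gives injectivity.
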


\begin{proof}

The proof of the last statement is contained in that of Theorem~\ref{thm:Th1} -- through the facts that $\phi$ and $\psi$ are inverse maps up to detour moves and diagram isotopy, and that $w$-moves at the level of knot diagrams can be performed using only detour moves and diagram isotopies, by the railway trick (Fig.\ref{pic:wmove}).

As for the first statement, one easily sees that $\ab$ is invariant under $w_0$-moves. We have to show that conversely, if $\ab(G_1)=\ab(G_2)$, then $G_1$ and $G_2$ are equivalent under $w_0$-moves.

This is clear if $G_1$ has no arrows, since then $\ab(G_1)=G_1$. Now proceed by induction. Since $G_1$ and $G_2$ have the same abelianization, they have in particular the same underlying classical Gauss diagram, and there is a natural correspondence between their arrows.

\textbf{Case 1:} No two arrows in $G_1$ cross each other. Then at least one arrow surrounds a single isolated edge on one side (as in an $\mathrm{R}_1$-move). Choose such an arrow $\alpha$ and remove it, as well as its match in $G_2$. By induction, there is a sequence of $w_0$-moves on the resulting diagram $G_1^\prime$ that turns it into $G_2^\prime$.
Since the arrows of $G_1^\prime$ have a natural match in $G_1$, those $w_0$-moves make sense there, and take every marking of $G_1$ to be equal to its match in $G_2$, except for those in the neighborhood of $\alpha$. So we may assume that $G_1$ and $G_2$ only differ near $\alpha$ as in Fig.\ref{pic:proofab}. Since all the unseen markings coincide in $G_1$ and $G_2$, and since $\ab(G_1)$ and $\ab(G_2)$ have the same global marking, it follows that $$a+b+c=a^\prime+
b^\prime+c^\prime.$$ Thus a $w_0$-move on $\alpha$ with conjugating element $g=a^\prime-a$ turns $G_1$ into $G_2$.

\begin{figure}[h!]
\centering 
\psfig{file=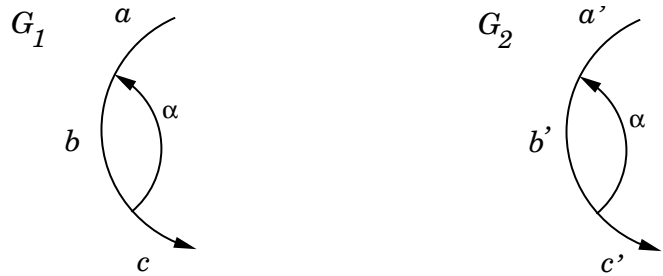
,scale=0.8}
\caption{Notations for case $1$}\label{pic:proofab}
\end{figure}

\textbf{Case 2:} There is at least one arrow $\alpha$ in $G_1$ that intersects another arrow. By the same process as in case $1$, one may assume that $G_1$ and $G_2$ only differ near $\alpha$ -- see Fig.\ref{pic:proofabis}, where $a$, $b$, $c$ and $d$ actually correspond to pairwise distinct edges since $\alpha$ intersects an arrow. Again, since all the unseen markings coincide in $G_1$ and $G_2$, one obtains
$$a+d=a^\prime+d^\prime,$$ and $$b+c=b^\prime+c^\prime,$$ by considering the global marking, and the marking of $\alpha$, in $\ab(G_1)$ and $\ab(G_2)$. Moreover, there is at least one arrow intersecting $\alpha$: considering the marking of that arrow gives $$a+b=a^\prime+b^\prime.$$ The last three equations may be written as $$a^\prime-a=b-b^\prime=c^\prime-c=d-d^\prime,$$ so that, again, a $w_0$-move on $\alpha$ with conjugating element $g=a^\prime-a$ turns $G_1$ into $G_2$.

\begin{figure}[h!]
\centering 
\psfig{file=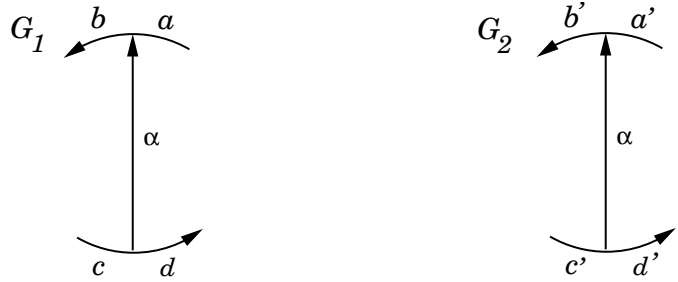
,scale=0.8}
\caption{Notations for case $2$}\label{pic:proofabis}
\end{figure}

\end{proof}

\begin{remark}
A different proof of this proposition was given in a draft paper, in the special case $\pi=\ZZ$ (\cite{MortierGaussDiagrams}, Proposition $2.2$). As an exercise, one can show that this proof extends to the case of an arbitrary abelian group.
\end{remark}

To make the picture complete, it only remains to understand R-moves in this context.

\begin{definition}[obstruction loops]
Within any local Reidemeister picture like those shown on Fig.\ref{Rmoves} featuring at least one arrow, there is exactly one (unoriented) simple loop. We call it the \textit{obstruction loop}. 
Fig.\ref{pic:loops} shows typical examples.
\end{definition}

\begin{definition}[R-moves]\label{lem:obstruction}
A move from Fig.\ref{Rmoves} is likely to define an R-move only if the obstruction loop lies in the kernel of the decorating map $H_1(G)\rightarrow \pi$ (which makes sense even though the loop is unoriented). Under that assumption, the \emph{R-moves for abelian Gauss diagrams} are defined by the usual conditions:

\begin{itemize}
\item $i=1.$ No additional condition.
\item $i=2.$ The arrows head to the same edge, and have opposite signs.

\item $i=3.$ The value of $\mathrm{w}(e)\varepsilon(e)$ is the same for all three visible edges $e$, and the values of  $\uparrow\!\!(e)$ are pairwise different (see Definition~\ref{def_epsilon}).
\end{itemize}

\end{definition}

\begin{figure}[h!]
\centering 
\psfig{file=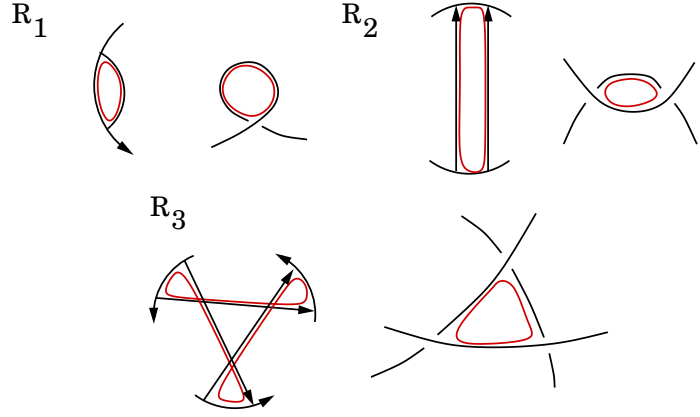,scale=0.70}
\caption{Homological obstruction to $\mathrm{R}$-moves}\label{pic:loops}
\end{figure}

\begin{theorem}\label{thm:abelian}
The map $\ab$ induces a natural $1-1$ correspondence between equivalence classes of abelian Gauss diagrams on $\pi$ up to R-moves and virtual knot types on $(\pi,w_0)$.
\end{theorem}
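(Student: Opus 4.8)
The plan is to leverage Proposition~\ref{prop:abelian}, which already provides a bijection $\overline{\ab}$ between abelian Gauss diagrams on $\pi$ and $w_0$-equivalence classes of Gauss diagrams on $\pi$. Since that bijection is in hand, the theorem reduces to the statement that $\overline{\ab}$ intertwines the two equivalence relations generated by R-moves, i.e.\ that R-moves on the two sides correspond. Concretely, I would prove two things: (i) if $G\to G'$ is an R-move between Gauss diagrams on $\pi$, then $\ab(G)$ and $\ab(G')$ are either equal or related by an R-move for abelian Gauss diagrams in the sense of Definition~\ref{lem:obstruction}; and (ii) conversely, every R-move $H\to H'$ between abelian Gauss diagrams arises as $\ab$ applied to an R-move between Gauss diagrams on $\pi$. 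Together with Proposition~\ref{prop:abelian}, these give the claimed correspondence.

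Step (i) is largely bookkeeping. The combinatorial data entering the $\mathrm{R}_i$-move conditions --- the integers $\uparrow(e)$, the signs $\mathrm{w}(e)$ and $\varepsilon(e)$, and, for $\mathrm{R}_3$, the permutation of the arrow ends --- depend only on the underlying classical Gauss diagram, hence coincide for $G$ and for $\ab(G)$. The one genuine point is that the R-move hypothesis on $G$, namely that the surrounded edges all carry the unit $1_\pi$, implies the hypothesis on $\ab(G)$, namely that the obstruction loop lies in $\ker\mu$: this is because the obstruction loop (Fig.~\ref{pic:loops}) runs precisely along the surrounded edges and along arrows, and arrows carry no $\pi$-marking, so its image under $\mu$ is the signed sum of the markings of the surrounded edges, which is $0$. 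That $\ab(G')$ is then obtained from $\ab(G)$ by the corresponding abelian move is a routine verification, since the way $\mu$ is transported across an abelian R-move (restriction along the evident map between the first homology groups of the before- and after-diagrams) is exactly the homological shadow of the \enquote{merge $\rightsquigarrow$ multiply} rule for R-moves on $\pi$; the exceptional small-diagram cases are handled the same way.

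Step (ii) is the substantive part. Given an abelian R-move $H\to H'$, choose any Gauss diagram $G$ on $\pi$ with $\ab(G)=H$ --- such a lift exists by Proposition~\ref{prop:abelian}. The local Reidemeister picture occurs verbatim in $G$, but the R-move may not yet be performable there because the surrounded edges need not carry the unit. I would fix this by a sequence of $w_0$-moves on the arrows of the local picture: such moves do not alter the underlying classical diagram, and they leave $\ab(G)=H$ unchanged, while each one can be used to set one more surrounded edge to $1_\pi$. For $\mathrm{R}_1$ nothing is needed: the obstruction loop is $e\cdot\bar\alpha$, so the kernel hypothesis says directly that the single surrounded edge $e$ already carries $1_\pi$. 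For $\mathrm{R}_2$ and $\mathrm{R}_3$ one performs the $w_0$-moves one arrow at a time; since a $w_0$-move on an arrow affects only the few edges incident to that arrow, the moves can be ordered so as not to disturb the edges already set to $1_\pi$, and the relation $\mu([\ell])=0$ --- which persists because $\mu$ is $w_0$-invariant --- forces the last surrounded edge to become $1_\pi$ automatically. With all surrounded edges of the modified diagram $G_1'$ now equal to $1_\pi$, the R-move is performable, yielding $G_2'$; applying step (i) to $G_1'\to G_2'$ identifies $\ab(G_2')$ with the uniquely determined result $H'$ of the abelian move. This proves (ii), and with (i) the theorem; naturality is inherited from Proposition~\ref{prop:abelian}.

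The step I expect to be the main obstacle is the $w_0$-gauge fixing in (ii): one must check that the local $w_0$-moves can be sequenced so that they do not undo one another, and that the single relation coming from the obstruction loop is precisely the linear dependence that makes the final surrounded edge fall into place. In each of the cases $\mathrm{R}_2$ and $\mathrm{R}_3$ this amounts to solving a small triangular system over $\pi$, using the cyclic incidence pattern between the new arrows and the surrounded edges in the Reidemeister picture.
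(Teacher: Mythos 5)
Your proposal is correct and follows essentially the same route as the paper, whose proof is just a two-line version of your argument: the forward direction is the bookkeeping you describe, and your $w_0$-gauge-fixing in step (ii) is exactly what the paper means when it says the vanishing homological obstruction puts the lifted diagrams \enquote{in a position to perform a generalized R-move} (an R-move composed with $w$-moves, as in Fig.~\ref{pic:generalizedGamma}). Your write-up simply makes explicit the reduction to Proposition~\ref{prop:abelian} and the solvability of the small linear system over $\pi$ that the paper leaves implicit.
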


\begin{proof}
$\ab$ clearly maps an R-move in the non commutative sense to an R-move in the abelian sense. Conversely, if $\ab(G_1)$ and $\ab(G_2)$ differ from an (abelian) R-move, then the vanishing homological obstruction implies that $G_1$ and $G_2$  are in a position to perform a \enquote{generalized R-move} like the examples pictured on Fig.\ref{pic:generalizedGamma}.
\end{proof}

Theorems~\ref{thm:Th1}~and ~\ref{thm:abelian} together imply the following

\begin{corollary}\label{cor:orientable}
If $\Sigma$ is an orientable surface with abelian fundamental group, then there is a $1-1$ correspondence between abelian Gauss diagrams on $\pi_1(\Sigma)$ up to R-moves, and virtual knot types on $\Sigma$.
\end{corollary}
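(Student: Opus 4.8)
The plan is to obtain the statement purely by composing the two bijections already established, once one notices that orientability makes the weight homomorphism disappear; no new geometric or combinatorial work should be needed.

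First I would record that, by Definition-Lemma~\ref{def:wfg}, an orientable $\Sigma$ has trivial thickening $\Sigma\times\RR$, hence $w_1(\Sigma)$ is the trivial morphism; thus the weighted fundamental group of $\Sigma$ is exactly $(\pi,w_0)$, where $\pi=\pi_1(\Sigma)$ and $w_0$ is the trivial homomorphism considered in Subsection~\ref{subsec:abelian}. Since $\pi$ is abelian by hypothesis, the weighted group $(\pi,w_0)$ is simultaneously within the scope of Theorem~\ref{thm:Th1} (which requires only a surface) and of Theorem~\ref{thm:abelian} (which requires $\pi$ abelian).

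Then I would simply chain the correspondences. Theorem~\ref{thm:abelian} gives a natural $1-1$ correspondence between abelian Gauss diagrams on $\pi$ up to R-moves and virtual knot types on $(\pi,w_0)$; Theorem~\ref{thm:Th1}, specialized to $w=w_0$, gives a natural $1-1$ correspondence between virtual knot types on $(\pi,w_0)$ --- that is, Gauss diagrams on $\pi$ modulo R-moves and $w_0$-moves --- and virtual knot types on $\Sigma$. Composing $\Phi$ with the inverse of the bijection induced by $\ab$ produces the desired map, and it is natural because each factor is.

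The "hard part" here is essentially nonexistent: both theorems describe the very same intermediate object, so there is no compatibility condition to verify and the composition is literally well defined. The only point worth a sentence is the bookkeeping check that the phrase "virtual knot types on $(\pi,w_0)$" denotes the same equivalence relation in both theorems, which is immediate from Definition~\ref{def:GammaRm}.
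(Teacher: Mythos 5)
Your proposal is correct and is exactly the paper's argument: the paper derives the corollary by observing that Theorems~\ref{thm:Th1} and~\ref{thm:abelian} compose, with orientability of $\Sigma$ forcing $w_1(\Sigma)=w_0$ so that both theorems refer to the same intermediate set of virtual knot types on $(\pi,w_0)$. No further comment is needed.
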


\subsection{Homological formulas}
\label{subsec:homform}

It may seem not easy to compute an arbitrary value of the linear map decorating an abelian Gauss diagram, given only its values on the fundamental loops. To end this section, we give two formulas to fill this gap, by understanding the coordinates of an arbitrary loop in the basis of fundamental loops.

\subsubsection{The energy formula}

Fix an abelian Gauss diagram $G$. Observe that as a cellular complex, $G$ has no $2$-cells, thus every $1$-homology class has a unique set of \enquote{coordinates} along the family of edges and arrows. For each $1$-cell $c$ (which may be an arrow or an edge), we denote by $\left<\cdot,c\right>:H_1(G)\rightarrow\ZZ$ the coordinate function along $c$. It is a group homomorphism.

Let us denote by $\left[ A\right] \in H_1(G)$ the class of the fundamental loop associated with an arrow $A$ (Fig.\ref{pic:energy} left).

\begin{deflemma}[Energy of a loop]\label{def:energy}
Fix an edge $e$ in $G$, and a class $\gamma\in H_1(G)$. The value of 
\begin{equation}\label{eq:energy} 
E_e (\gamma) = \left<\gamma,e\right> -\sum_{\left<\left[ A\right],e\right>=1} \left<\gamma,A\right>
\end{equation}
is independent of $e$. This defines a group homomorphism  $E:H_1(G) \rightarrow \ZZ$.
\end{deflemma}

\begin{proof}
Let us compare the values of $E_\cdot (\gamma)$ for an edge $e$ and the edge $e^\prime$ right after it. $e$ and $e^\prime$ are separated by a vertex $P$, which is the endpoint of an arrow $A$. There are two possible situations (Fig.\ref{pic:energy}):
\begin{enumerate}
\item $P$ is the tail of $A$. Then $\left<\left[ A\right],e\right>=1$ and $\left<\left[ A\right],e^\prime\right>=0$, so that
$$E_e (\gamma)-E_{e^\prime} (\gamma)=\left<\gamma,e\right>
-\left<\gamma,A\right>
-\left<\gamma,e^\prime\right>.$$

\item $P$ is the head of $A$. Then $\left<\left[ A\right],e\right>=0$ and $\left<\left[ A\right],e^\prime\right>=1$, so that
$$E_e (\gamma)-E_{e^\prime} (\gamma)=\left<\gamma,e\right>
+\left<\gamma,A\right>
-\left<\gamma,e^\prime\right>.$$
\end{enumerate}
In both cases, $E_e (\gamma)-E_{e^\prime}(\gamma)$ is equal to $\left<\partial
\gamma,P\right>$, which is $0$ since $\gamma$ is a cycle.
\end{proof}

\begin{figure}[h!]
\centering 
\psfig{file=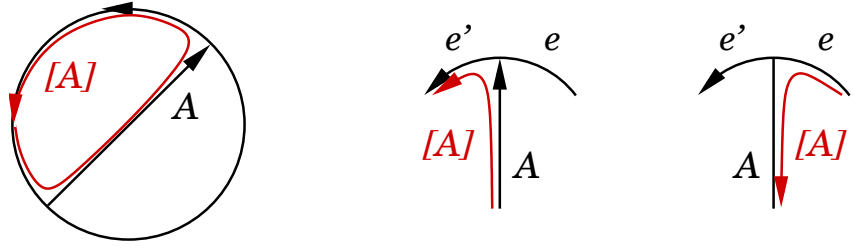,
scale=1}
\caption{The fundamental loop of an arrow and the two cases in the proof of Lemma~\ref{def:energy}}\label{pic:energy}
\end{figure}

\begin{theorem}\label{thm:formula2}
For any $\gamma\in H_1(G)$, one has the decomposition

\begin{equation}\label{eq:formula2} \gamma =\sum_A\left<\gamma,A\right> \left[ A\right] +E(\gamma)  \left[K \right] .\end{equation}
\end{theorem}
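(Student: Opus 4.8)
The plan is to exploit linearity. Both sides of \eqref{eq:formula2} are group homomorphisms in the variable $\gamma$: the left-hand side tautologically, and the right-hand side because every coordinate map $\left<\cdot,A\right>$ and the energy $E$ (Definition-Lemma~\ref{def:energy}) are homomorphisms $H_1(G)\rightarrow\ZZ$. So it suffices to check \eqref{eq:formula2} when $\gamma$ runs over a basis of $H_1(G)$, and the natural choice is the basis of fundamental loops from Definition-Lemma~\ref{def:distinguished}: the classes $\left[ A\right]$, one for each arrow $A$, together with the class $\left[ K\right]$ of the base circle.

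First I would pin down the arrow-coordinates of these basis elements. The base circle traverses no arrow, so $\left<\left[ K\right],A\right>=0$ for every arrow $A$; and the fundamental loop of an arrow $B$ runs along $B$ and then closes up along an arc of the base circle, hence traverses $B$ exactly once and no other arrow, giving $\left<\left[ B\right],A\right>=\delta_{A,B}$. Consequently the arrow-part $\sum_A\left<\gamma,A\right>\left[ A\right]$ of the right-hand side reproduces precisely the arrow-components of $\gamma$ in the fundamental basis: it vanishes for $\gamma=\left[ K\right]$ and equals $\left[ B\right]$ for $\gamma=\left[ B\right]$.

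Next I would evaluate $E$ on the basis. For $\gamma=\left[ K\right]$, every edge $e$ satisfies $\left<\left[ K\right],e\right>=1$ while $\left<\left[ K\right],A\right>=0$, so \eqref{eq:energy} gives $E(\left[ K\right])=1$ at once. For $\gamma=\left[ B\right]$ with $B$ an arrow, I would argue that the subtracted sum in \eqref{eq:energy} equals $\left<\left[ B\right],e\right>$ for \emph{every} edge $e$: indeed $\left<\left[ B\right],A\right>=\delta_{A,B}$, so $\sum_{\left<\left[ A\right],e\right>=1}\left<\left[ B\right],A\right>$ simply records whether $B$ belongs to the index set, which happens exactly when $e$ lies on the sub-arc traversed by $\left[ B\right]$, i.e. exactly when $\left<\left[ B\right],e\right>=1$. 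Hence the two terms of \eqref{eq:energy} cancel and $E(\left[ B\right])=0$. Putting this together, the right-hand side of \eqref{eq:formula2} equals $0+1\cdot\left[ K\right]=\left[ K\right]$ when $\gamma=\left[ K\right]$ and $\left[ B\right]+0\cdot\left[ K\right]=\left[ B\right]$ when $\gamma=\left[ B\right]$; in both cases it equals $\gamma$, so by linearity \eqref{eq:formula2} holds for all $\gamma$.

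I expect the only delicate point to be the bookkeeping of the fundamental loop $\left[ B\right]$ of an arrow — knowing exactly that its support among the $1$-cells consists of $B$ itself together with the edges of one of the two arcs of the base circle joining the ends of $B$ — since this description of $\left[ B\right]$ (already fixed by Definition-Lemma~\ref{def:distinguished} and Fig.~\ref{pic:energy}) is what makes the cancellation $E(\left[ B\right])=0$ work. Everything else is formal, resting on the facts, established above, that $E$ is a well-defined homomorphism and that the fundamental loops form a basis of $H_1(G)$.
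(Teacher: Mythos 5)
Your proposal is correct and follows exactly the paper's argument: the paper also observes that both sides are homomorphisms and checks the identity on the basis of fundamental loops, merely calling the verification "immediate" where you spell out the computations $E([K])=1$ and $E([B])=0$.
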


\begin{proof}
This formula is an identity between two group homomorphisms, so it suffices to check it on the basis of fundamental loops, which is immediate.
\end{proof}

\begin{remark}
The existence of a map $E$ such that Theorem~\ref{thm:formula2} holds was clear, since for each arrow $A$ considered as a $1$-cell, $[A]$ is the only fundamental loop that involves $A$. With that in mind, one may read into \eqref{eq:energy} as follows: $E(\gamma)$ counts the (algebraic) number of times that $\gamma$ goes through an edge, minus the number of those times that are already taken care of by the fundamental loops of the arrows. This number has to be the same for all edges, so that one recovers a multiple of $\left[K \right]$. \end{remark}

\subsubsection{The torsion formula}

Looking at \eqref{eq:formula2} and Fig.\ref{pic:energy}, one may feel that it would be more natural to have $\left[K \right]-\left[A \right]$ involved in the formula, instead of $\left[A \right]$, for all arrows $A$ such that $\left<\gamma,A\right>$ is negative -- that is, when $\gamma$ runs along $A$ with the wrong orientation more often than not. The formula then becomes
\begin{equation}\label{eq:torsion}
 \gamma=\sum_{\left<\gamma,A\right> >0}\left<\gamma,A\right> \left[ A\right] 
+ \sum_{\left<\gamma,A\right> <0}\left<\gamma,A\right> \left(\left[ A\right]- \left[ K\right]\right)\,-\mathcal{T}(\gamma)\left[ K\right],
\end{equation}
where 

\begin{equation}\label{TvsE}
-\mathcal{T}(\gamma)= E(\gamma)+ \sum_{\left<\gamma,A\right> <0}\left<\gamma,A\right>.
\end{equation}

\begin{definition}
$\mathcal{T}(\gamma)$ is called the \textit{torsion} of $\gamma$.
\end{definition}

How is \eqref{eq:torsion} different from \eqref{eq:formula2}?\newline
$\ominus$ On the negative side, unlike the energy, $\mathcal{T}$ is not a group homomorphism. But it actually behaves almost like one:

\begin{lemma}\label{lem:torsionmorphism}
Let $\gamma_1$ and $ \gamma_2$ be two homology classes such that
$$\forall A,\,\, \left<\gamma_1,A\right>
\left<\gamma_2,A\right> \geq 0.$$
Then
$$\mathcal{T}(\gamma_1+\gamma_2)=\mathcal{T}(\gamma_1)+\mathcal{T}(\gamma_2).$$
\end{lemma}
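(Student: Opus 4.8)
The plan is to reduce everything to the formula \eqref{TvsE} relating $\mathcal{T}$ to the energy $E$, and then use the fact that $E$ is a genuine group homomorphism (Definition-Lemma~\ref{def:energy}). Write $\mathcal{T}(\gamma)=-E(\gamma)-\sum_{\left<\gamma,A\right><0}\left<\gamma,A\right>$ for every class $\gamma$. Applying this to $\gamma_1$, $\gamma_2$, and $\gamma_1+\gamma_2$, and using additivity of $E$, the desired identity $\mathcal{T}(\gamma_1+\gamma_2)=\mathcal{T}(\gamma_1)+\mathcal{T}(\gamma_2)$ becomes equivalent to the single scalar identity
\begin{equation}\label{eq:torsionplan}
\sum_{\left<\gamma_1+\gamma_2,A\right><0}\left<\gamma_1+\gamma_2,A\right>
= \sum_{\left<\gamma_1,A\right><0}\left<\gamma_1,A\right>
+ \sum_{\left<\gamma_2,A\right><0}\left<\gamma_2,A\right>.
\end{equation}
So the whole lemma is reduced to checking \eqref{eq:torsionplan} under the hypothesis $\left<\gamma_1,A\right>\left<\gamma_2,A\right>\geq 0$ for all arrows $A$.

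The verification of \eqref{eq:torsionplan} is a term-by-term comparison over the arrows $A$. Since $\left<\cdot,A\right>$ is a homomorphism, $\left<\gamma_1+\gamma_2,A\right>=\left<\gamma_1,A\right>+\left<\gamma_2,A\right>$; write $m=\left<\gamma_1,A\right>$ and $n=\left<\gamma_2,A\right>$. The hypothesis says $mn\geq 0$, i.e. $m$ and $n$ have the same sign or at least one of them is zero, which is exactly the situation in which $\min(m+n,0)=\min(m,0)+\min(n,0)$: if both are $\geq 0$ all three terms are $0$; if both are $\leq 0$ then $m+n\leq 0$ and the contributions are $m+n$, $m$, $n$ respectively. (The only case this fails is $m>0>n$ or $n>0>m$, which the hypothesis excludes.) Summing over all arrows $A$ gives \eqref{eq:torsionplan}, hence the lemma.

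The main — and really only — obstacle is bookkeeping: one must be careful that the index sets of the three sums in \eqref{eq:torsionplan} are different (they are governed by the sign of $\left<\gamma_i,A\right>$ for the respective $i$), so the comparison cannot be done naively sum-against-sum but must be done arrow-by-arrow, rewriting each sum as $\sum_A \min(\langle\gamma,A\rangle,0)$ over the full set of arrows. Once the sums are put in this uniform shape the hypothesis does all the work, and the rest is the elementary identity $\min(m+n,0)=\min(m,0)+\min(n,0)$ valid precisely when $mn\geq 0$. No further input beyond \eqref{TvsE} and the additivity of $E$ is needed.
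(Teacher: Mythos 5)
Your proof is correct and follows exactly the route the paper intends: the paper's own proof is the one-line remark that the lemma ``follows from the definition and the fact that $E(\gamma)$ is a homomorphism,'' and your argument simply spells out the bookkeeping, rewriting each correction term as $\sum_A \min(\left<\gamma,A\right>,0)$ and using $\min(m+n,0)=\min(m,0)+\min(n,0)$ when $mn\geq 0$. Nothing further is needed.
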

\begin{proof}
It follows from the definition and the fact that $E(\gamma)$ is a homomorphism.
\end{proof}

\noindent $\oplus$ On the positive side:
\begin{lemma}\label{lem:torsionindep}
The torsion of a loop in a Gauss diagram $G$ does not depend on the orientations of the arrows of $G$.
\end{lemma}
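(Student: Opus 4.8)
The plan is to rewrite the right‑hand side of the decomposition \eqref{eq:torsion} so that the coefficient $-\mathcal{T}(\gamma)$ of $[K]$ is exhibited as a coordinate of $\gamma$ in a basis of $H_1(G)$ that visibly does not involve any choice of orientation for the arrows.

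The only geometric input required — and the step I expect to carry essentially all the difficulty — is understanding how a fundamental loop changes when a single arrow is reversed. Fixing an arrow $A$ and letting $A^{op}$ denote the same arrow with the opposite orientation, I would first record that, as a $1$‑cycle, $[A]=A+\rho_A$, where $\rho_A$ is the arc of the base circle run \emph{forward} from the head of $A$ to its tail — this is exactly the loop pictured in Fig.~\ref{pic:energy} and implicit in the case analysis of the proof of Lemma~\ref{def:energy}. Reversing $A$ exchanges its head and tail and replaces the $1$‑cell $A$ by $-A$, so the return arc becomes the complementary arc $\rho_{A^{op}}=K-\rho_A$; therefore $[A^{op}]=-A+(K-\rho_A)=[K]-[A]$, the coordinate function along the reversed cell becomes $\langle\,\cdot\,,A^{op}\rangle=-\langle\,\cdot\,,A\rangle$, and every other fundamental loop and every other coordinate function is literally unchanged. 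Getting this identity and its signs right is the crux; everything after it is formal.

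With this in hand I would feed it into \eqref{eq:torsion}. For an arrow with $\langle\gamma,A\rangle<0$, the identities $[A]-[K]=-[A^{op}]$ and $\langle\gamma,A\rangle=-\langle\gamma,A^{op}\rangle$ turn the summand $\langle\gamma,A\rangle([A]-[K])$ into $\langle\gamma,A^{op}\rangle[A^{op}]$ with a \emph{positive} coefficient, while summands with $\langle\gamma,A\rangle>0$ are left alone and those with $\langle\gamma,A\rangle=0$ are absent. Writing $\overline A$ for the orientation of the arrow for which $\langle\gamma,\overline A\rangle\ge 0$, formula \eqref{eq:torsion} then reads $\gamma=\sum_A\langle\gamma,\overline A\rangle[\overline A]-\mathcal{T}(\gamma)[K]$. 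Here $\{[K]\}\cup\{[\overline A]\}_A$ is precisely the basis of fundamental loops (Definition-Lemma~\ref{def:distinguished}) of the Gauss diagram obtained from $G$ by orienting every arrow the $\gamma$‑nonnegative way, hence $\ZZ$‑linearly independent, so $-\mathcal{T}(\gamma)$ is the unique $[K]$‑coordinate of $\gamma$ in this family. Since this family and the integers $\langle\gamma,\overline A\rangle$ depend only on $\gamma$ and on $G$ viewed as an unoriented diagram, $\mathcal{T}(\gamma)$ does too, which is the claim.

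As a cross‑check (and an alternative route) one can instead reduce to reversing a single arrow $A_0$ — every reorientation being a product of such moves — and argue numerically: the identity $[A_0^{op}]=[K]-[A_0]$ combined with Theorem~\ref{thm:formula2} and uniqueness of coordinates yields $E_{G'}(\gamma)=E_G(\gamma)+\langle\gamma,A_0\rangle$, while a three‑way case check on the sign of $\langle\gamma,A_0\rangle$ shows $\sum_{\langle\gamma,A\rangle<0}\langle\gamma,A\rangle$ changes by exactly $-\langle\gamma,A_0\rangle$; the two variations cancel in \eqref{TvsE}.
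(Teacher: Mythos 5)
Your proposal is correct, but it is organized differently from the paper's proof. The paper argues by brute expansion: substituting the definition of $E_e(\gamma)$ into \eqref{TvsE} for one fixed edge $e$ gives
$$\mathcal{T}(\gamma)=
-\left<\gamma,e\right>
+ \sum_{\substack{\left<\gamma,A\right> <0\\
 \left<[A],e\right> =0}}\left<\gamma,A\right> - \sum_{\substack{\left<\gamma,A\right> >0\\
 \left<[A],e\right> =1}}
 \left<\gamma,A\right>,$$
and one checks that reversing an arrow moves its (nonzero) contribution from one sum to the other while $\left<\gamma,A\right>$ changes sign, so each arrow's net contribution is untouched. Your ``cross-check'' route is essentially this same computation, repackaged as the cancellation of the variations of $E$ and of $\sum_{\left<\gamma,A\right><0}\left<\gamma,A\right>$ under a single reversal. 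Your main route, however, is genuinely different and more structural: the identity $[A^{op}]=[K]-[A]$ (which you derive correctly, and which is the real content) lets you rewrite \eqref{eq:torsion} as $\gamma=\sum_A\left<\gamma,\overline A\right>[\overline A]-\mathcal{T}(\gamma)[K]$ in the fundamental-loop basis of the $\gamma$-adapted reorientation of $G$, so that $-\mathcal{T}(\gamma)$ is exhibited as a coordinate in a basis built from the unoriented diagram. This buys a conceptual explanation of why the torsion formula was set up with $[A]-[K]$ on the negatively-traversed arrows, at the price of one small wrinkle you should acknowledge: for arrows with $\left<\gamma,A\right>=0$ the orientation $\overline A$ is not determined by $\gamma$, so the ``canonical'' basis is not quite canonical; but since such arrows carry coefficient $0$ and replacing $[A]$ by $[K]-[A]$ in a basis does not change the $[K]$-coordinate of a class whose $[A]$-coordinate vanishes, the $[K]$-coordinate, hence $\mathcal{T}(\gamma)$, is still well defined independently of that choice.
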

\begin{proof}
By expanding the defining formula,
$$\mathcal{T}(\gamma)=
-\left<\gamma,e\right>
\hspace*{0.7cm}+ \sum_{\begin{array}{c}
\left<\gamma,A\right> <0\\
 \left<[A],e\right> =0
\end{array}}\left<\gamma,A\right> \hspace*{0.7cm}- \sum_{\begin{array}{c}
\left<\gamma,A\right> >0\\
 \left<[A],e\right> =1
\end{array}}
 \left<\gamma,A\right>,$$
one sees that reversing an arrow makes its contribution (if non zero) switch from one sum to the other, while $\left<\gamma,A\right>$ also changes signs.
\end{proof}

This lemma allows one to expect that $\mathcal{T}(\gamma)$ should admit a very simple combinatorial interpretation. It actually does, but only for a certain family of loops -- the ERS loops defined below. Fortunately enough, this family happens to positively generate $H_1(G)$, which allows one to compute the torsion of any loop by using Lemma~\ref{lem:torsionmorphism}.

\begin{definition}\label{def:ER}
The notation $\gamma$ is used for loops as well as $1$-homology classes.
A homology class $\gamma\in H_1(G)$ is said to be
\begin{itemize}

\item \textit{ER} (for \enquote{edge-respecting}), if for every edge $e$, $\left<\gamma,e\right> \geq 0.$

\item \textit{simple} if it is the class of a simple (injective) loop, that is, $\abs{\left<\gamma,c\right>} \leq 1$ for every $1$-cell $c$ (edge or arrow).

\item \textit{ERS} if it is ER and simple.

\item \textit{proper} if it runs along at least one arrow.

\end{itemize}
\end{definition}

\begin{figure}[h!]
\centering 
\psfig{file=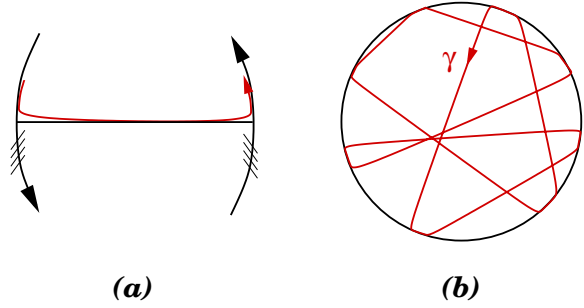,scale=0.72}
\caption{The local and global look of a proper ERS loop}\label{simpleloop}
\end{figure}

Consider a permutation $\sigma\in\mathfrak{S}\left( \llbracket 1,n\rrbracket\right)$, and set
$$\,\nearrow \!(\sigma):=\sharp\left\lbrace i\in\llbracket 1,n\rrbracket\mid\sigma(i)>i\right\rbrace.$$
It is easy to check that if $\sigma_0$ is the circular permutation $(1\,2\,\ldots\,n)$, then
$$\forall \sigma\in\mathfrak{S}, \,\nearrow \!(\sigma)=\,\nearrow \!(\sigma_0\sigma\sigma_0^{-1}).$$

\begin{definition}
The invariance property from above means that $\mathcal{T}$ is well-defined for permutations of a set of $n$ points lying in an abstract oriented circle.
We still denote this function by $\mathcal{T}$, and call it the \textit{torsion} of a permutation.
\end{definition}

Let $\gamma$ be a proper simple loop, then the set of edges $e$ such that $\left<\gamma,e\right> \neq 0$ can be naturally assimiliated to a finite subset of an oriented circle, and $\gamma$ induces a permutation of this set. Let us denote it by $\sigma_\gamma$.

\begin{theorem}\label{thm:torsion}
For all proper ERS loops $\gamma$,
$$\mathcal{T}(\gamma)=\,\nearrow \!(\sigma_\gamma).$$
\end{theorem}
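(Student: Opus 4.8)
The plan is to use the rigid shape of a proper ERS loop $\gamma$ to turn the energy decomposition of Theorem~\ref{thm:formula2} into an explicit arc–count, and then to recognise that count as $\nearrow(\sigma_\gamma)$. By Lemma~\ref{lem:torsionindep} I may first reverse every arrow along which $\gamma$ runs backwards; this does not change the runs of $\gamma$, nor $\mathcal{T}(\gamma)$, nor $\sigma_\gamma$, so I assume $\gamma$ crosses each arrow it uses from tail to head. Since $\gamma$ is simple, never traverses an edge backwards (ER), and no vertex of $G$ lies on two arrows, $\gamma$ reads cyclically as $R_1A_1R_2A_2\cdots R_rA_r$, where $A_1,\dots,A_r$ are the $r$ arrows it uses (each once, tail to head) and each $R_i$ is a maximal forward arc of the base circle $K$ run through between two hops. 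Writing $S$ for the set of edges used by $\gamma$, the $R_i$ partition $S$; they are pairwise disjoint arcs of $K$ and no two are adjacent on $K$ (again by the vertex count), so in the cyclic order $Q_1,\dots,Q_r$ in which the runs sit on $K$ there are $r$ nonempty gaps $g_1,\dots,g_r$ of omitted edges, with $g_j$ the block lying between $Q_j$ and $Q_{j+1}$; in particular $\gamma$ omits at least one edge. Let $\pi$ be the permutation with $R_i=Q_{\pi(i)}$. The permutation $\sigma_\gamma$ is the one induced by $\gamma$ on its runs (it carries $R_i$ to $R_{i+1}$, read via the circle positions $Q_1,\dots,Q_r$, and fixes the remaining edges of $S$), so $\nearrow(\sigma_\gamma)=\#\{i:\pi(i+1)>\pi(i)\}$, the number of cyclic ascents of $\pi$.

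Now I would apply Theorem~\ref{thm:formula2}. Because $\langle\gamma,A_i\rangle=1$ and $\langle\gamma,A\rangle=0$ for every unused arrow, \eqref{eq:formula2} reads $\gamma=\sum_{i=1}^{r}[A_i]+E(\gamma)[K]$, and since no arrow coefficient is negative, \eqref{TvsE} gives $\mathcal{T}(\gamma)=-E(\gamma)$. Each fundamental loop is $[A_i]=A_i+\sum_{e\in G_i}e$, where $G_i\subset K$ is the arc from $\operatorname{head}(A_i)$ to $\operatorname{tail}(A_i)$ taken forwards; its complement $H_i=K\smallsetminus G_i$, the forward arc from $\operatorname{tail}(A_i)$ to $\operatorname{head}(A_i)$, is exactly the stretch of $K$ that the $i$-th hop flies over, and by the anatomy above $H_i$ meets the set of omitted edges precisely in the gaps $g_{\pi(i)},g_{\pi(i)+1},\dots,g_{\pi(i+1)-1}$. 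Comparing the coefficient of a single edge $e$ on the two sides of $\gamma=\sum_i[A_i]+E(\gamma)[K]$ gives $E(\gamma)=\mathbbm{1}_{e\in S}-\#\{i:e\in G_i\}$ (which re–derives the edge–independence of $E$), so choosing $e$ to be an omitted edge, say $e\in g_k$,
\[
\mathcal{T}(\gamma)=\#\{i:e\in G_i\}=r-\#\{i:e\in H_i\}=r-\#\bigl\{i:\ k\in[\pi(i),\pi(i+1)-1]\bigr\},
\]
the last brackets denoting cyclic intervals in $\mathbb{Z}/r$.

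It remains to count: $\#\{i:k\in[\pi(i),\pi(i+1)-1]\}$ is independent of $k$ and equals $\#\{i:\pi(i+1)<\pi(i)\}$. Indeed, as $i$ ranges over $\{1,\dots,r\}$ each residue $p$ is the left endpoint of exactly one of the intervals $[\pi(i),\pi(i+1)-1]$ (namely $i=\pi^{-1}(p)$) and is one step past the right endpoint of exactly one other (namely $i=\pi^{-1}(p)-1$), so the covering multiplicity of these $r$ intervals has vanishing increments around $\mathbb{Z}/r$ and is therefore constant, equal to $\tfrac1r\sum_i\bigl((\pi(i+1)-\pi(i))\bmod r\bigr)=\#\{i:\pi(i+1)<\pi(i)\}$. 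Hence $\mathcal{T}(\gamma)=r-\#\{i:\pi(i+1)<\pi(i)\}=\#\{i:\pi(i+1)>\pi(i)\}=\nearrow(\sigma_\gamma)$.

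I expect the genuine work to be in the first paragraph: keeping the three interleaved cyclic orders straight (the edges of $K$, the runs $R_i$, the gaps $g_k$), verifying that $H_i$ is exactly the union of the gaps $g_{\pi(i)},\dots,g_{\pi(i+1)-1}$ together with the runs caught between them, and confirming that the opening reorientation of arrows is harmless — in particular that after it the loops $[A_i]$ appearing in \eqref{eq:formula2} are the fundamental loops of the reoriented arrows and that the induced permutation $\sigma_\gamma$ is unchanged. With the anatomy settled, the remaining two steps are short arithmetic.
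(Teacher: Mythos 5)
Your proof is correct, and it reaches the identity by a genuinely different route from the paper's. The paper first deletes every arrow avoided by $\gamma$ and reverses the misoriented ones, so that each run of $\gamma$ collapses to a single ``red'' edge; the statement then reduces to the combinatorial identity $\lambda(\text{blue})=\lambda(\text{red})-1=\,\nearrow\!(\sigma_\gamma)$ for $\lambda(e)=\sum_A\left<[A],e\right>$ (Lemma~\ref{l1}), which is proved by checking the base case $\sigma_0$ and propagating along the six twist moves of Fig.~\ref{h2} with a region-by-region bookkeeping. You instead keep the unused arrows, evaluate the energy decomposition \eqref{eq:formula2} at a single omitted edge to get $\mathcal{T}(\gamma)=r-\#\{i:e\in H_i\}$, and replace the entire induction by the elementary fact that the $r$ cyclic intervals $[\pi(i),\pi(i+1)-1]$ cover $\ZZ/r$ with constant multiplicity equal to the number of cyclic descents of $\pi$. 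This is shorter and eliminates the case analysis; the real work is the run-and-gap anatomy of your first paragraph, which does check out (ER plus simplicity forces the edge just after the end of a run, and just before its start, to be omitted, so the $r$ gaps are nonempty and the coefficient comparison at an omitted edge is available). Two small points. Because you do not delete unused arrows, you must read $\sigma_\gamma$ as a permutation of the $r$ runs rather than of the individual used edges; this is the reading the paper itself relies on (its reduction ``has no effect on either side'' only under that reading -- with the traversal-successor permutation on edges the claim would already fail for $\gamma=[A]$ in a diagram containing a second, unused arrow), but your parenthetical ``fixes the remaining edges of $S$'' does not literally define a permutation of $S$, so it is cleaner to say outright that $\sigma_\gamma$ permutes the runs. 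Finally, the identity $\#\{\text{ascents}\}+\#\{\text{descents}\}=r$ used in your last line degenerates at $r=1$, but there $H_1$ is the unique gap and both sides of the theorem are $0$ by inspection.
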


This theorem can be useful in practice, since the torsion of a permutation can be computed at a glance on the braid-like presentation. Observe that
\begin{enumerate}
\item Every non proper loop is homologous to a multiple of $[K]$, easy to determine.
\item For every proper loop $\gamma$, there is an integer $n$ such that $\tilde\gamma=\gamma+n[K]$ is proper, ER, and has zero coordinate along at least one edge. Namely, $n=-\operatorname{min}_e
\left<\gamma,e\right>.$
\item Every class $\tilde\gamma$ as above may be decomposed as a sum $\tilde\gamma=\sum_i\gamma_i$ such that 
\begin{itemize}
\item all the $\gamma_i$'s are proper and ERS
\item $\forall i,j, A, \left<\gamma_i,A\right>
\left<\gamma_j,A\right> \geq 0$
\end{itemize}
\item By Lemma~\ref{lem:torsionmorphism}, $\mathcal{T}(\tilde{\gamma})=\sum_i
\mathcal{T}(\gamma_i)$, and the $\mathcal{T}(\gamma_i)$'s are given by Theorem~\ref{thm:torsion}.
\end{enumerate}

This shows that it is possible to compute any homology class by using the torsion formula. Whether it is more interesting than the energy formula depends on the context.

\begin{proof}[Proof of Theorem~\ref{thm:torsion}]
One may assume that for every arrow $A$, $\left< \gamma,A \right>=1.$
Indeed, deleting an arrow avoided by $\gamma$, or reversing the orientation of an arrow that $\gamma$ runs in the wrong direction, have no effect on either side of the formula (notably because of Lemma~\ref{lem:torsionindep}). Under this assumption, half of the edges of $G$ are run by $\gamma$: call them the \textit{red edges of $G$}, while the other half are called the \textit{blue edges}. Red and blue edges alternate along the orientation of the circle.

If $e$ is any (red or blue) edge, we define:
$$\lambda(e):=\sum_A \left<[A],e\right>. $$

\begin{lemma}\label{l1}
Under the assumption that $\left< \gamma,A \right>=1$ for all $A$, the value of $\lambda(e)$ only depends on the color of the edge $e$. Moreover, $$\lambda(blue)=\lambda(red)-1=\,\nearrow \!(\sigma_\gamma).$$
\end{lemma}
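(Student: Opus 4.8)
The plan is to separate the three claims: the colour-dependence of $\lambda$ and the relation $\lambda(blue)=\lambda(red)-1$ will follow almost for free from the energy homomorphism, and the genuine work is the identification $\lambda(blue)=\,\nearrow\!(\sigma_\gamma)$. \emph{Step 1: the colouring.} At each vertex $P$, separating the incoming edge $e^-$ from the outgoing edge $e^+$, the cycle condition $\langle\partial\gamma,P\rangle=0$ relates $\langle\gamma,e^-\rangle$, $\langle\gamma,e^+\rangle$ and $\langle\gamma,A\rangle$ for the arrow $A$ ending at $P$; exactly as in the proof of Definition-Lemma~\ref{def:energy}, the hypothesis $\langle\gamma,A\rangle=1$ forces $\langle\gamma,e^-\rangle-\langle\gamma,e^+\rangle$ to be $+1$ if $P$ is a tail and $-1$ if $P$ is a head. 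Hence $e^-$ and $e^+$ always receive opposite colours (confirming that red and blue alternate, $n$ of each), the edge just before a tail is red and just after it blue, and symmetrically for heads. In particular every tail is the vertex immediately following some red edge and every head the vertex immediately preceding some red edge.

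\emph{Step 2: colour-dependence and the shift by $1$.} Each fundamental loop $[A]$ traverses the base circle positively, so $\langle[A],e\rangle\in\{0,1\}$ and $\lambda(e)=\#\{A\mid\langle[A],e\rangle=1\}$. Substituting $\langle\gamma,A\rangle=1$ into \eqref{eq:energy} yields $E_e(\gamma)=\langle\gamma,e\rangle-\lambda(e)$, that is $\lambda(e)=\langle\gamma,e\rangle-E(\gamma)$, where $E(\gamma)$ is independent of $e$ by Definition-Lemma~\ref{def:energy}. Since $\langle\gamma,e\rangle$ is $1$ on red edges and $0$ on blue ones, $\lambda$ depends only on the colour of $e$, and $\lambda(blue)=\lambda(red)-1=-E(\gamma)$.

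\emph{Step 3: $\lambda(blue)=\,\nearrow\!(\sigma_\gamma)$.} Since $\langle[A],e\rangle=1$ precisely when $e$ lies on the positive arc from $\mathrm{head}(A)$ to $\mathrm{tail}(A)$, $\lambda(e)$ counts the arrows $A$ such that, walking from $e$ positively, one meets $\mathrm{tail}(A)$ before $\mathrm{head}(A)$. Label the red edges $r_1,\dots,r_n$ in cyclic order; by Step~1 the vertices between $r_i$ and $r_{i+1}$ are exactly one tail $T_i$ (just after $r_i$) and one head $H_i$ (just before $r_{i+1}$). Writing $A_i$ for the arrow with tail $T_i$ and $H_{p(i)}$ for its head, the red edge right after $H_{p(i)}$ is $r_{p(i)+1}$, so $\sigma_\gamma(r_i)=r_{p(i)+1}$. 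Now fix a blue edge $b$; this is harmless because $\lambda(blue)$ is colour-determined, and $\,\nearrow\!(\sigma_\gamma)$ is independent of the chosen cut of the circle of red edges since $\,\nearrow\!(\sigma)=\,\nearrow\!(\sigma_0\sigma\sigma_0^{-1})$. Cut just before $b$, identify the red edges with $\{1,\dots,n\}$ accordingly, and list the tails and heads in the order they are met from $b$: then $A_i$ is counted iff $T_i$ precedes $H_{p(i)}$ in that order, and a direct inspection — translating through $\sigma_\gamma(i)=p(i)+1$ — turns this into the condition $i<\sigma_\gamma(i)$, the two arrows adjacent to $b$ (tail just before $b$, head just after $b$) being automatically excluded on the left and automatically failing $i<\sigma_\gamma(i)$ on the right. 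Therefore $\lambda(b)=\#\{i\mid\sigma_\gamma(i)>i\}=\,\nearrow\!(\sigma_\gamma)$, and combined with Step~2 this gives $\lambda(blue)=\lambda(red)-1=\,\nearrow\!(\sigma_\gamma)$.

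\emph{Expected main obstacle.} Everything except Step~3 is immediate once Definition-Lemma~\ref{def:energy} is in hand. The delicate point is the bookkeeping in Step~3: the colouring is not symmetric in heads and tails (a tail follows its red edge, a head precedes its), which is precisely what produces the shift $\sigma_\gamma(i)=p(i)+1$ between the incidence data of the arrows and the permutation $\sigma_\gamma$; one must choose the reference blue edge with some care so that the two exceptional arrows next to it drop out of both counts, after which the count collapses to the definition of $\,\nearrow$.
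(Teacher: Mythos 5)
Your proof is correct, but it takes a genuinely different route from the paper's. The paper establishes Lemma~\ref{l1} by induction over permutations: it verifies the base case $\sigma_0=(1\,2\,\ldots\,n)$ and then checks that both sides of the identity vary in the same way under the six elementary \enquote{twist moves} of Fig.~\ref{h2}, which realize adjacent transpositions in the cycle notation and connect all diagrams. You instead argue directly: your Step~2 extracts colour-constancy and the shift $\lambda(\text{red})=\lambda(\text{blue})+1$ as an immediate consequence of the already-proved energy homomorphism, by substituting $\left<\gamma,A\right>=1$ into \eqref{eq:energy} to get $\lambda(e)=\left<\gamma,e\right>-E(\gamma)$; and your Step~3 identifies $\lambda(\text{blue})$ with $\,\nearrow\!(\sigma_\gamma)$ by an explicit positional count after cutting the circle at the chosen blue edge, using the conjugation-invariance $\,\nearrow\!(\sigma)=\,\nearrow\!(\sigma_0\sigma\sigma_0^{-1})$ to justify the choice of cut. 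I checked the bookkeeping in Step~3: with vertices listed from $b$ as $H_n,T_1,H_1,\ldots,T_n$, the arrow $A_i$ contributes iff $2i-1<2p(i)$, i.e.\ $i\leq p(i)$, i.e.\ $i<\sigma_\gamma(i)$, and the two boundary arrows are correctly discarded on both sides — so the count is right. Your approach buys a self-contained computation that reuses Definition-Lemma~\ref{def:energy}, avoids the case analysis of twist moves, and avoids having to argue that such moves reach every diagram and every proper ERS loop; the paper's inductive argument is more pictorial and simultaneously tracks the variation of the torsion $\mathcal{T}(\gamma)$, which it records in the accompanying table.
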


Let us temporarily admit this result. By the definition of $\lambda$,
$$\begin{array}{rcl}
\sum_A\left[ A\right] & = & \sum \text{arrows} +\lambda(\text{red}) \sum ( \text{red edges}) 
+\lambda(\text{blue}) \sum ( \text{blue edges})\\

& \stackrel{\text{Lemma}~
\ref{l1}}{=} & \sum \text{arrows} + \sum \left(\text{red edges}  \right) + \lambda(\text{blue}) \sum \left(\text{red and blue edges} \right)\\
& \stackrel{\phantom{\text{Lemma}~
\ref{l1}}}{=} & \gamma + \lambda(\text{blue}) [K]\\
& \stackrel{\text{Lemma}~
\ref{l1}}{=} & \gamma +\,\nearrow \!(\sigma_\gamma) [K].
\end{array}$$
Since it was assumed that $\left< \gamma,A \right>=1$ for every arrow, the definition of $\mathcal{T}$ \eqref{eq:torsion} reads
$$\gamma=\sum_A\left[ A\right]-\mathcal{T}(\gamma) [K],$$ which terminates the proof of the theorem, up to Lemma~\ref{l1}.

\end{proof}

\begin{proof}[Proof of Lemma \ref{l1}]
In the case of $\sigma_0=(1\,2\,\ldots\,n)$ depicted on Fig.\ref{h1}, it is easy to see that $\lambda(red)=n$ and $\lambda(blue)=n-1$, while $\,\nearrow \!(\sigma_0)=n-1$.
The lemma being true for one diagram, let us show that it survives elementary changes that cover all the diagrams.

\begin{figure}[h!]
\centering 
\psfig{file=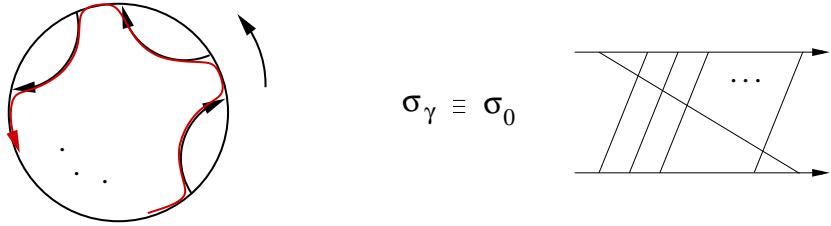,scale=0.8}
\caption{Braid-like representations of permutations are to be read from bottom to top}\label{h1}
\end{figure}

Notice that for every proper ERS loop $\gamma$, $\sigma_\gamma$ is a \textit{cycle}, and conversely a permutation that is a cycle uniquely defines an undecorated Gauss diagram \textit{and} a proper ERS loop $\gamma$ such that for every arrow $A$, $\left< \gamma,A \right>=1$. Thus, covering all possible permutations implies covering all possible diagrams and proper ERS loops. So all we have to check is that the formula survives an operation on $\sigma_\gamma$, of the form: 
$$\left(\, \ldots \, i\, j \, \ldots\,\right) \longrightarrow 
\left(\, \ldots \, j\, i \, \ldots\,\right)$$
The corresponding move at the level of Gauss diagrams may be of six different types, grouped in three pairs of reverse operations (Fig.\ref{h2}).

\begin{figure}[h!]
\centering 
\psfig{file=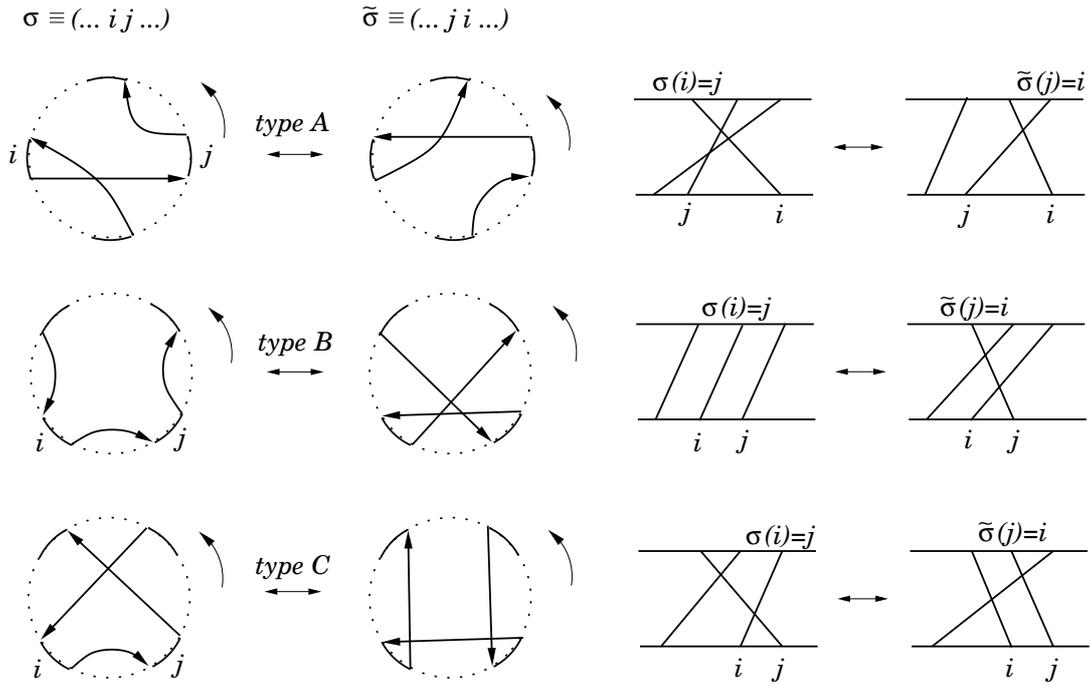,scale=1}
\caption{Twist moves on Gauss diagrams}\label{h2}
\end{figure}

On each diagram in Fig.\ref{h2}, the three moving arrows split the base circle into six regions. One computes the variation of $\lambda$ separately for each of these regions, and sees that it is the same for each of them. The results are gathered in the following table, proving the lemma.

$$
\begin{array}{c|c|c}
\text{type of move}     & \text{variation of }\lambda    & \text{variation of }\mathcal{T}(\gamma)\\ 
\hline A                               & \text{unchanged}       & \text{unchanged}      \\
B \text{ (from left to right)}  & \text{decreases by }1  & \text{decreases by }1  \\
C \text{ (from left to right)}  & \text{decreases by }1  & \text{decreases by }1 \\ \end{array}
$$

\end{proof}

\section{Finite-type invariants}\label{FTI}

One of the main points of using Gauss diagrams is their ability to describe finite-type invariants by simple formulas \citep{PolyakViro, Fiedler,  ChmutovKhouryRossi, PolyakChmutovHOMFLYPT}. In the case of classical long knots in $3$-space, such formulas actually cover all Vassiliev invariants as was shown by M.Goussarov \citep{Goussarov}. In the virtual case, the two notions actually differ (see \citep{KauffmanVKT99} and also \citep{ChrismanGPV, ChrismanLattices}. Finite type invariants for virtual knots that do admit Gauss diagram formulas shall be called GPV invariants \citep{GPV}. 

In \citep{MortierPolyakEquations}, a simple set of criteria was given to detect a particular family of those formulas, called \textit{virtual arrow diagram formulas}. Most of the examples that are known belong to this family. That includes Chmutov-Khoury-Rossi's formulas for the coefficients of the Conway polynomial \citep{ChmutovKhouryRossi} (and their generalization by M. Brandenbursky \citep{Brandenbursky}), as well as the formulas from \citep{Fiedler, Fiedlerbraids, Grishanov} where different kinds of decorated diagrams are used. Note however that the formulas for the invariants extracted from the HOMFLYPT polynomial \citep{PolyakChmutovHOMFLYPT} are arrow diagram formulas only if the variable $a$ is specialized to $1$ (which yields back the result of \citep{ChmutovKhouryRossi}).

In this section, we extend the results from \citep{MortierPolyakEquations} to an arbitrary surface. Then we show how to apply them to any other kind of decorated diagrams found in the literature, by defining \textit{symmetry-preserving maps} which enable one to jump from one theory to another.

\subsection{General algebraic settings}\label{sec:GDspaces}
We denote by $\mathfrak{G}_n$ (\resp $\mathfrak{G}_{\leq n}$) the $\QQ$-vector space freely generated by Gauss diagrams on $\pi$ of degree $n$ (\resp ${\leq n}$), and set $\mathfrak{G}= \varinjlim\mathfrak{G}_{\leq n}$. Unless $\pi$ is a finite group, these spaces are not finitely generated, and we define their hat versions $\widehat{\mathfrak{G}}_n$ (\resp $\widehat{\mathfrak{G}}_{\leq n}$) as the $\QQ$-spaces of formal series of Gauss diagrams of degree $n$ (\resp $\leq n$). Finally, set $\widehat{\mathfrak{G}}= \varinjlim\widehat{\mathfrak{G}}_{\leq n}.$ An arbitrary element of $\widehat{\mathfrak{G}}$ is usually denoted by $\mathcal{G}$ and called a \textit{Gauss series}, of degree $n$ if it is represented in $\widehat{\mathfrak{G}}_{\leq n}$ but not in $\widehat{\mathfrak{G}}_{\leq n-1}$. The notation $G$ is saved for single Gauss diagrams. 

A Gauss diagram $G$ of degree $n$ has a \textit{group of symmetries} $\operatorname{Aut}(G)$, which is a subgroup of $\ZZ/2n$, made of the rotations of the circle that leave unchanged a given representative of $G$ (see Subsection~\ref{sec:Sinj}).
$\mathfrak{G}$ is endowed with the orthonormal scalar product with respect to its canonical basis, denoted by $(,)$, and its normalized version $\left\langle  ,\right\rangle$, defined by
\begin{eqnarray}\label{eq2}
\left\langle   G,G^\prime \right\rangle :=\abs{\operatorname{Aut}(G)}
(G,G^\prime).
\end{eqnarray}

%

There is a linear isomorphism $I: \mathfrak{G}_{\leq n}\rightarrow\mathfrak{G}_{\leq n}$, the keystone to the theory, which maps a Gauss diagram of degree $n$ to the formal sum of its $2^n$ subdiagrams:
\begin{equation}\label{def:I}
I(G)=\sum_{\sigma\in \left\lbrace \pm 1\right\rbrace ^{n}}G_{(\sigma)},\end{equation}
where $G_{(\sigma)}$ is $G$ deprived from the arrows that $\sigma$ maps to $-1$ (see Definition\ref{def:GammaRm} for subdiagrams). 
The inverse map of $I$ is given by
\begin{equation}\label{def:I-1}
I^{-1}(G)=\sum_{\sigma\in \left\lbrace \pm 1\right\rbrace ^{n}} \operatorname{sign}(\sigma)G_{(\sigma)}.
\end{equation}
\begin{definition}
 A finite-type invariant for virtual knots in the sense of Goussarov-Polyak-Viro is a virtual knot invariant given by a \textit{Gauss diagram formula}
\begin{equation}\label{eq1}
\nu_\mathcal{G}: G \mapsto \left\langle  \mathcal{G},I(G)
\right\rangle,  
\end{equation}
where $\mathcal{G}\in \widehat{\mathfrak{G}}$. Such a formula \enquote{counts} the subdiagrams of $G$, with weights given by the coefficients of $\mathcal{G}$. Notice that only one of the two arguments of $\left\langle,
\right\rangle$ needs to be a finite sum for the expression to make sense. We do not make a distinction between a virtual knot invariant and the linear form induced on $\mathfrak{G}$.
\end{definition}

\subsubsection{The Polyak algebra}

A Gauss series $\mathcal{G}\in \widehat{\mathfrak{G}}$ defines a virtual knot invariant if and only if the function $\left\langle  \mathcal{G},I(.)\right\rangle$ is zero on the subspace spanned by $\mathrm{R}$-moves and $w$-moves relators. Hence one has to understand the image of that subspace under $I$ with a simple family of generators. This is the idea of the construction of the Polyak algebra (\citep{P1,GPV}).

\begin{figure}[h!]
\centering
\psfig{file=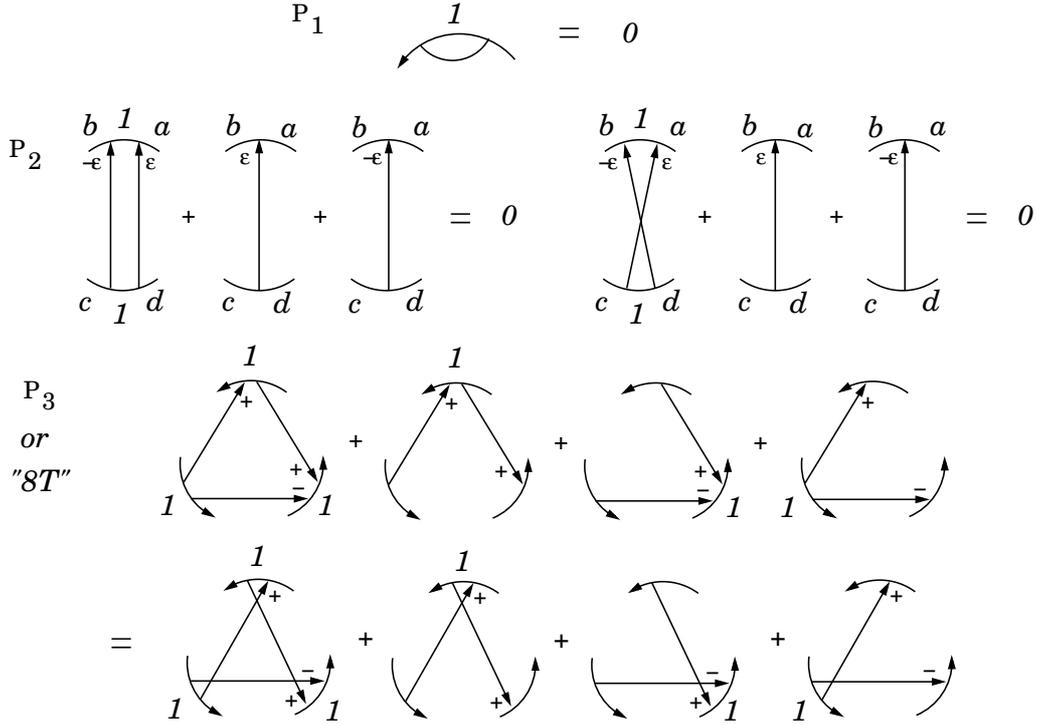,
scale=0.62}
\caption{The three kinds of Polyak relations -- only one $\mathrm{P}_3$ relation is shown, there is a second one obtained by reversing all the arrow orientations.}\label{pic:Pmoves}
\end{figure}

In the present case, $\mathcal{P}$ is defined as the quotient of $\mathfrak{G}$ by
\begin{itemize}
\item the relations shown in Fig.\ref{pic:Pmoves}, which we call $\mathrm{P}_1$, $\mathrm{P}_2$, $\mathrm{P}_3$ (or \textit{$8T$ relation}),
\item the $\mathrm{W}$ relation, which is simply the linear match of $w$-moves (\ie just replace the \enquote{$\leftrightsquigarrow$} with a \enquote{$=$} in all the relations from Fig.\ref{pic:conjugacy}).\end{itemize}

Be careful that unlike $\mathrm{R}_1$-moves, where an isolated arrow surrounding an edge marked with $1$ simply disappears, in a $\mathrm{P}_1$-move the presence of such an arrow completely kills the diagram. Fig.\ref{pic:Pmoves} does not feature the $\pi$-markings for $\mathrm{P}_3$ to lighten the picture, but they have to follow the usual \enquote{merge $\rightsquigarrow$ multiply} rule (see Definition~\ref{def:GammaRm}).

The following proposition extends Theorem $2.D$ from \citep{GPV}.

\begin{proposition}\label{thm:PolyakAlg}
The map $I$ induces an isomorphism $\mathfrak{G}/\RW
\rightarrow \mathfrak{G}/
\PW=:\mathcal{P}$. More precisely, $I$ induces an isomorphism between $\operatorname{Span}(\mathrm{R}_i)$ and $\operatorname{Span}(\mathrm{P}_i)$, for $i=1,2,3$, and between $\operatorname{Span}(\mathrm{W})$ and itself. It follows that the map $G\rightarrow I(G)\in \mathcal{P}$ defines a complete invariant for virtual knots.
\end{proposition}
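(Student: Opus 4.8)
The plan is to follow closely the strategy of \citep{GPV}, working one Reidemeister type at a time and tracking explicitly what $I$ does to each local relator. The key algebraic fact to exploit is that $I$ is an upper-triangular isomorphism with respect to the degree filtration: for a Gauss diagram $G$ of degree $n$, $I(G) = G + (\text{lower-degree terms})$, so $I$ preserves $\mathfrak{G}_{\leq n}$ and is invertible on it with inverse given by \eqref{def:I-1}. Consequently, to show $I(\operatorname{Span}(\mathrm{R}_i)) = \operatorname{Span}(\mathrm{P}_i)$ it suffices to show one inclusion together with a dimension/generation count, or — more robustly — to compute $I$ directly on a generating relator of each type and recognize the output, then compute $I^{-1}$ on a generating $\mathrm{P}_i$-relator and recognize an $\mathrm{R}_i$-relator. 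Since $I$ and $I^{-1}$ have the same shape, it is enough in practice to expand $I$ on the chosen generators.

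First I would handle the $\mathrm{W}$ relation, which is the easiest: a $w$-move relator is a difference $G - G'$ where $G$ and $G'$ have the same underlying arrows and differ only by the $\pi$-markings around one arrow (and possibly that arrow's orientation). Since $I$ is defined arrow-subset by arrow-subset and the merging rule is "merge $\rightsquigarrow$ multiply", each subdiagram $G_{(\sigma)}$ maps to the subdiagram $G'_{(\sigma)}$ obtained by the corresponding $w$-move (on the surviving arrow, if it survives, or else the markings simply multiply out to a conjugate global class). Hence $I(G - G') = \sum_\sigma (G_{(\sigma)} - G'_{(\sigma)})$ is again a sum of $\mathrm{W}$-relators, and symmetrically for $I^{-1}$; so $I$ restricts to an automorphism of $\operatorname{Span}(\mathrm{W})$. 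Next, $\mathrm{R}_1$: the relator is $G - G_0$ where $G$ has an extra isolated arrow around an edge marked $1$ and $G_0$ is $G$ with that arrow deleted. Expanding $I(G)$ over subsets $\sigma$, split according to whether the isolated arrow is kept or deleted: the "deleted" half reassembles exactly to $I(G_0)$, and the "kept" half is, by definition, the $\mathrm{P}_1$-relator (an isolated arrow on a $1$-marked edge, with the rest a subdiagram of $G_0$) — this is precisely why a $\mathrm{P}_1$ configuration "kills" the diagram. So $I(G - G_0) = \sum (\text{isolated-arrow-on-}1\text{ diagrams}) \in \operatorname{Span}(\mathrm{P}_1)$, and running the same bookkeeping with $I^{-1}$ and a $\mathrm{P}_1$-generator yields an $\mathrm{R}_1$-relator.

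The $\mathrm{R}_2$ and $\mathrm{R}_3$ cases are the substance, and the $8T$ relation for $\mathrm{R}_3$ is where I expect the real work. For $\mathrm{R}_2$: the relator is $G_1 - G_2$ with $G_1$ carrying two extra arrows of opposite sign and matching orientations around $1$-marked edges. Expanding $I(G_1)$ over the $2^2$ choices for this pair of arrows: keeping neither gives $I(G_2)$; keeping exactly one gives, after the forced cancellation coming from the opposite signs and the "merge $\rightsquigarrow$ multiply" rule (the two single-arrow subdiagrams obtained are identical but the signs are opposite in $I^{-1}$, or here they are genuinely equal Gauss diagrams and subtract against their twins from $G_2$-side expansions — this is the standard $\mathrm{R}_2$ bookkeeping), and keeping both gives the $\mathrm{P}_2$-generator; the upshot is $I(G_1 - G_2) \in \operatorname{Span}(\mathrm{P}_2)$, and conversely. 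For $\mathrm{R}_3$: one computes $I$ on an $\mathrm{R}_3$-relator $G_1 - G_2$ with three extra arrows; expanding over the $2^3$ subsets, the subsets of size $\leq 2$ collect into a combination that vanishes or reassembles using $\mathrm{R}_1$- and $\mathrm{R}_2$-type identities already handled (here one must be careful that the $1$-markings on the three "surrounded" edges, together with the condition that $\mathrm{w}(\cdot)\varepsilon(\cdot)$ be constant and $\uparrow\!\!(\cdot)$ pairwise distinct, make the relevant pairs of size-$\leq 2$ subdiagrams literally equal so they cancel across the $G_1$/$G_2$ difference), and the unique size-$3$ term is the $8T$-relator of Fig.~\ref{pic:Pmoves}. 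The main obstacle is precisely this $\mathrm{R}_3$ computation: verifying that all $2^3 - 1$ lower terms cancel correctly requires carefully matching subdiagrams of $G_1$ with subdiagrams of $G_2$ under the natural arrow-correspondence, checking that the $\pi$-markings agree after merging (they do, because the three edges in question are marked $1$, so merging across them does not disturb the products), and checking that the combinatorial decorations $\eta$, $\uparrow$, $\mathrm{w}$ are compatible so that no spurious virtual-crossing obstruction appears — this is the $8T$ bookkeeping of \citep{GPV} now carried in the $\pi$-decorated setting. Once the equality $I(\operatorname{Span}(\mathrm{R}_i)) = \operatorname{Span}(\mathrm{P}_i)$ is established for $i=1,2,3$ and $I(\operatorname{Span}(\mathrm{W})) = \operatorname{Span}(\mathrm{W})$, summing gives $I(\operatorname{Span}(\RW)) = \operatorname{Span}(\PW)$, hence the induced isomorphism $\mathfrak{G}/\RW \xrightarrow{\ \sim\ } \mathfrak{G}/\PW = \mathcal{P}$. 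Finally, completeness of $G \mapsto I(G) \in \mathcal{P}$ as a virtual knot invariant is then formal: by Theorem~\ref{thm:Th1} the virtual knot type of $G$ is exactly its class in $\mathfrak{G}/\RW$ (more precisely, two diagrams have the same image iff they are $\RW$-equivalent, which by Theorem~\ref{thm:Th1} means they represent the same virtual knot type on $(\pi,w)$), and $I$ being an isomorphism transports this faithfully to $\mathcal{P}$.
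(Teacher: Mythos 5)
Your overall strategy is the right one and is what the paper intends (it simply defers to Theorem~2.D of \citep{GPV}): compute $I$ on a relator of each type, use that $I$ is triangular with respect to the degree filtration, check the $\mathrm{W}$ case by the \enquote{merge $\rightsquigarrow$ multiply} compatibility, and deduce completeness from the fact that all relators are differences of basis vectors. The $\mathrm{W}$ and $\mathrm{R}_1$ steps are essentially correct as you describe them.

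However, there is a genuine error in your handling of the lower-degree terms for $\mathrm{R}_2$ and $\mathrm{R}_3$. You claim that the \enquote{keep exactly one} terms in the $\mathrm{R}_2$ expansion cancel (\enquote{forced cancellation coming from the opposite signs}) and that the size-$\leq 2$ subsets in the $\mathrm{R}_3$ expansion \enquote{vanish or reassemble}, leaving only the top-degree configurations as the Polyak relators. This is false, and no correct bookkeeping will make it true. In the $\mathrm{R}_2$ case, the two subdiagrams obtained by keeping exactly one of the two new arrows are \emph{distinct} Gauss diagrams (the surviving arrow carries writhe $+$ in one and $-$ in the other), they both appear with coefficient $+1$ in $I(G_1)$, and there is nothing in $I(G_2)$ for them to cancel against, since $G_2$ does not contain those arrows at all. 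They survive and are constituents of the relator: $\mathrm{P}_2$ is the inhomogeneous three-term relation mixing degrees $n$ and $n-1$. Likewise for $\mathrm{R}_3$: only the subsets keeping at most one of the three moved arrows give literally equal subdiagrams of $G_1$ and $G_2$; the six terms keeping exactly two of them do \emph{not} cancel, and together with the two top terms they constitute the eight-term relator --- this is precisely why $\mathrm{P}_3$ is called the $8T$ relation. Your reading would make $\mathrm{P}_2$ and $\mathrm{P}_3$ homogeneous, which contradicts both the name $8T$ and the paper's later decomposition of these relations into homogeneous pieces $\mathrm{P}_2^{(n-1),1}$, $\mathrm{P}_2^{(n-2),2}$, $\mathrm{P}_3^{(n-2),2}$, $\mathrm{P}_3^{(n-3),3}$. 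The fix is simply to accept the inhomogeneous relators as what $I$ produces: for each subset $S$ of the unseen arrows, the sum of the surviving terms over the $2^2$ (resp. $2^3$) local choices is exactly one $\mathrm{P}_2$ (resp. $8T$) relator, with the $\pi$-markings matching because the surrounded edges carry $1$; the reverse inclusion is then checked with $I^{-1}$ in the same way.
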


\subsubsection{The symmetry-preserving injections}\label{sec:Sinj}
Depending on the context, one may have to consider simultaneously different types of Gauss diagrams, with more or less decorations. This subsection presents a natural way to do it, convenient from the viewpoint of Gauss diagram invariants. The construction requires one to choose a kind of combinatorial objects that is the \enquote{father} of all other kinds, in the sense of quotienting. We present the construction by taking as the father type that of Gauss diagrams on a group.

In first place, we do not regard Gauss diagrams up to homeomorphisms of the circle: the base circle is assumed to be the unit circle in $\CC$, the endpoints of the arrows are assumed to be located at the $2n$-th roots of unity, and the arrows are straight line segments. Such a diagram is called \textit{rigid}.

By a \enquote{type of rigid Gauss diagrams} we mean an equivalence relation on the set of rigid Gauss diagrams on $\pi$, which is required to satisfy two properties:\\
\textbf{1.} (Degree property) All diagrams in a given equivalence class shall have the same degree.\\
\textbf{2.} (Stability property) The action of $\ZZ/2n$ on the set of Gauss diagrams of degree $n$ shall induce an action on the set of degree $n$ equivalence classes.\\

Since every construction in this subsection is therefore destined to be homogeneous, the degree of all Gauss diagrams is once and for all set equal to $n$. A \textit{rigid Gauss diagram of type $\sim$} is an equivalence class under the relation $\sim$. A \textit{Gauss diagram (of type $\sim$)} is the orbit of a rigid diagram of type $\sim$ under the action of $\ZZ/2n$. The corresponding $\QQ$-spaces are respectively denoted by $\mathfrak{G}_\sim^\text{rigid}$ and $\mathfrak{G}_\sim$.

Since $\ZZ/2n$ is abelian, two elements from the same orbit have the same stabilizer, hence a Gauss diagram $G$ has a well-defined \textit{group of symmetries} $\operatorname{Aut}(G)$, which is the stabilizer of any of its rigid representatives under the action of $\ZZ/2n$.
Consequently, the space $\mathfrak{G}_\sim$ is endowed with a pairing $\left\langle  ,\right\rangle$ defined by \eqref{eq2}.

Now consider two types of rigid Gauss diagrams, say $1$ and $2$, such that relation $1$ is finer than relation $2$ (\enquote{$1\prec 2$}).

\begin{definition}[Forgetful projections]\label{def:proj}
A $1$-rigid diagram $G_1$ determines a unique $2$-rigid diagram whose $\ZZ/2n$-orbit only depends on that of $G_1$. This induces a natural surjective map at the level of Gauss diagram spaces, denoted by $$\operatorname{T_2^1}:\mathfrak{G}_{(1)}\twoheadrightarrow
\mathfrak{G}_{(2)}.$$
\end{definition}

Note that this map may be not well-defined on the spaces of formal series of Gauss diagrams, if some $2$-equivalence class contains infinitely many $1$-classes.

Example: the abelianization map $\operatorname{ab}$ (Definition~\ref{def:ab}) induces by linearity a forgetful projection from Gauss diagrams on $\pi$ to abelian diagrams on $\pi$, when $\pi$ is abelian.

\begin{definition}[Symmetry-preserving injections]\label{def:Sinj}
In the opposite way, there is a map $\mathfrak{G}_{(2)}^\text{rigid}
\rightarrow
\mathfrak{G}_{(1)}^\text{rigid}$ that sends a $2$-rigid diagram $G_2$ to the formal sum of all $1$-classes that it contains. When this sum is pushed in $\mathfrak{G}_{(1)}$, the result
\begin{itemize}
\item is well-defined: a $2$-rigid diagram cannot contain infinitely many rigid representatives of a given Gauss diagram of type $1$, since the orbits are finite ($\ZZ/2n$ is finite).
\item only depends on the $\ZZ/2n$-orbit of $G_2$. 
\end{itemize}
This induces an injective \textit{symmetry-preserving map}  at the level of formal series,
$$\operatorname{S_2^1}:\widehat{\mathfrak{G}_{(2)}}
\hookrightarrow
\widehat{\mathfrak{G}_{(1)}}.$$
$\operatorname{S_2^1}$ is well-defined, componentwise, since $2$-rigid diagrams from different $\ZZ/2n$-orbits contain $1$-rigid diagrams from disjoint sets of $\ZZ/2n$-orbits (the images of two different Gauss diagrams do not overlap). It is injective for the same reason. 
\end{definition}

The terminology is explained by the following fundamental formula:

\begin{lemma}\label{Sfund}
With notations as above, for any Gauss diagram $G_2$ of type $2$,
\begin{equation}
\S_2^1(G_2)=\sum_{\operatorname{T_2^1}\left( G_1\right)=G_2} \frac{\abs{\operatorname{Aut}(G_2)}}{\abs{\operatorname{Aut}(G_1)}}G_1.
\end{equation}
\end{lemma}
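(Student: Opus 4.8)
The formula relates the symmetry-preserving injection $\S_2^1$ to a sum over preimages of the forgetful projection $\T_2^1$, with coefficients given by ratios of symmetry-group orders. I would prove it by unwinding the two definitions carefully at the level of \emph{rigid} diagrams and then pushing to $\ZZ/2n$-orbits, keeping track of multiplicities via the orbit–stabilizer theorem. The key combinatorial fact is that if $G_2$ is a Gauss diagram of type $2$ and $\widetilde{G_2}$ is a chosen rigid representative, then the $1$-rigid diagrams lying in the $2$-equivalence class $\widetilde{G_2}$, once grouped into their $\ZZ/2n$-orbits, are exactly the rigid representatives of the Gauss diagrams $G_1$ of type $1$ with $\T_2^1(G_1)=G_2$ — but each such $G_1$ may be represented several times, and the number of times is $\abs{\operatorname{Aut}(G_2)}/\abs{\operatorname{Aut}(G_1)}$.

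First I would fix a rigid representative $\widetilde{G_2}$ of $G_2$ and spell out that, by Definition~\ref{def:Sinj}, $\S_2^1(G_2)$ is (the image in $\widehat{\mathfrak{G}_{(1)}}$ of) the formal sum of all $1$-equivalence classes contained in $\widetilde{G_2}$, and that this is independent of the choice of $\widetilde{G_2}$. Next, for a fixed Gauss diagram $G_1$ of type $1$ with $\T_2^1(G_1)=G_2$, I would count how many of the $1$-rigid classes inside $\widetilde{G_2}$ represent $G_1$. The stabilizer $\operatorname{Aut}(G_2)\subseteq\ZZ/2n$ acts on the set of $1$-rigid classes contained in $\widetilde{G_2}$ (this uses the Stability property and the fact that $\operatorname{Aut}(G_2)$ fixes the $2$-class $\widetilde{G_2}$). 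The $1$-rigid classes inside $\widetilde{G_2}$ that represent $G_1$ form a single orbit under this action: any two rigid representatives of $G_1$ inside $\widetilde{G_2}$ differ by an element of $\ZZ/2n$ that, since it preserves $\widetilde{G_2}$, lies in $\operatorname{Aut}(G_2)$. The stabilizer of one such rigid class under the $\operatorname{Aut}(G_2)$-action is exactly $\operatorname{Aut}(G_1)$ (the rotations fixing the $1$-rigid diagram). By orbit–stabilizer, the number of these classes is $\abs{\operatorname{Aut}(G_2)}/\abs{\operatorname{Aut}(G_1)}$, which is the asserted coefficient. Summing over all $G_1$ with $\T_2^1(G_1)=G_2$ gives the formula.

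The main obstacle, I expect, is purely bookkeeping: carefully distinguishing the three layers of objects — rigid diagrams, $1$- or $2$-equivalence classes of rigid diagrams, and $\ZZ/2n$-orbits thereof — and making sure the $\operatorname{Aut}(G_2)$-action on "$1$-classes inside $\widetilde{G_2}$" is well-defined and transitive on the fibre over each $G_1$. Care is also needed to check that $\operatorname{Aut}(G_1)$, as a subgroup of $\ZZ/2n$, is automatically contained in $\operatorname{Aut}(G_2)$ when $\T_2^1(G_1)=G_2$ (so that the quotient of orders makes sense as a count), which follows because a rotation fixing a $1$-rigid representative of $G_1$ also fixes the $2$-rigid diagram it determines, hence fixes $\widetilde{G_2}$ up to the $\ZZ/2n$-action and lies in $\operatorname{Aut}(G_2)$. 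Once this scaffolding is in place the computation is a one-line application of orbit–stabilizer, and the independence of all choices follows because both sides are manifestly functions of the Gauss diagram $G_2$ only.
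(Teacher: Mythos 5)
Your proof is correct and follows essentially the same route as the paper's: fix a rigid representative of $G_2$, let $\operatorname{Aut}(G_2)$ act on the $1$-classes it contains, identify the orbits with the preimages of $G_2$ under $\T_2^1$, and apply orbit–stabilizer with stabilizer $\operatorname{Aut}(G_1)$ to obtain the coefficient $\abs{\operatorname{Aut}(G_2)}/\abs{\operatorname{Aut}(G_1)}$. The transitivity-on-the-fibre argument you spell out is exactly the paper's observation that distinct $\operatorname{Aut}(G_2)$-orbits remain distinct under the full $\ZZ/2n$-action, stated in contrapositive form.
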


\textit{Informally, the weight given to a preimage of $G_2$ under $\operatorname{T_2^1}$ is the amount of symmetry that it has lost by the gain of more information. Note that the weights are integers, since $\operatorname{Aut}(G_1)$ identifies with a subgroup of $\operatorname{Aut}(G_2)$.}

\begin{proof}
Fix a representative $G_2^\text{rigid}$ of $G_2$. By the stability property, $\operatorname{Aut}(G_2)$ acts on the set of $1$-classes contained in $G_2^\text{rigid}$. Moreover, by definition of $\operatorname{Aut}(G_2)$, two different orbits under that action still lie in different orbits under the action of $\ZZ/2n$ itself. Therefore there is a $1-1$ correspondence between the $\operatorname{Aut}(G_2)$-orbits and the Gauss diagrams that happen in the sum $\S_2^1(G_2)$. The stabilizer of a given $1$-class $G_1^\text{rigid}$ is by definition $\operatorname{Aut}(G_1)$, whence the cardinality of the corresponding orbit, which is also the coefficient of $G_1$ in $\S_2^1(G_2)$, is $\frac{\abs{\operatorname{Aut}(G_2)}}{\abs{\operatorname{Aut}(G_1)}}$.
\end{proof}

\begin{proposition}\label{prop:ST}
\ 
\begin{enumerate}

\item For any three relations such that $1\prec 2\prec 3$, the following diagrams commute:
\begin{center}
\begin{tikzpicture}
\node (0) at (0.45,0.45) {$\circlearrowleft$};
\node (.) at (5.5,0) {.};
\node (1) at (0,0) {$\mathfrak{G}_{(2)}$};
\node (2) at (1.5,0) {$\mathfrak{G}_{(3)}$};
\node (3) at (0,1.5) {$\mathfrak{G}_{(1)}$};
\draw[->>,>=latex] (1) -- (2) node[midway,below] {$\operatorname{T_3^2}$};
\draw[->>,>=latex] (3) -- (1) node[midway,left] {$\operatorname{T_2^1}$};
\draw[->>,>=latex] (3) -- (2) node[midway,above right] {$\operatorname{T_3^1}$};

\node (10) at (3,1.5) {$\widehat{\mathfrak{G}_{(3)}}$};
\node (20) at (4.5,1.5) {$\widehat{\mathfrak{G}_{(2)}}$};
\node (30) at (4.5,0) {$\widehat{\mathfrak{G}_{(1)}}$};
\node (40) at (4.05,1.05) {$\circlearrowleft$};
\draw[right hook->,>=latex] (10) -- (20) node[midway,above] {$\S_3^2$};
\draw[right hook->,>=latex] (20) -- (30) node[midway,right] {$\S_2^1$};
\draw[right hook->,>=latex] (10) -- (30) node[midway,below left] {$\S_3^1$};

\end{tikzpicture}
\end{center}

\item Injections and projections are pairwise $\left\langle  ,\right\rangle$-adjoint, in the sense that 
$$
\begin{array}{cc}
\forall \,\, \mathcal{G}_1\in
\mathfrak{G}_{(1)},\, \mathcal{G}_2\in\widehat{
\mathfrak{G}_{(2)}},& \left\langle  \S_2^1\left(\mathcal{G}_2
\right),\mathcal{G}_1\right\rangle =\left\langle  \mathcal{G}_2,
\T_2^1\left(
\mathcal{G}_1\right)\right\rangle.
\end{array}
$$
\item 
$\operatorname{Im} \S_2^1 = \operatorname{Ker}^\bot \operatorname{T_2^1}.
$

\end{enumerate}
\end{proposition}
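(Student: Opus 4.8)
The plan is to prove the three parts in order, each being a routine verification once the right bookkeeping is in place, but with the adjointness in part~2 being the conceptual heart.

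\textbf{Part 1 (commutativity of the triangles).} For the left triangle, I would argue purely at the level of rigid diagrams: a $1$-rigid diagram $G_1$ determines a $2$-rigid diagram (by coarsening the equivalence) which in turn determines a $3$-rigid diagram, and this two-step coarsening agrees with the one-step coarsening $1\prec 3$ because the relation $3$ is coarser than $2$ is coarser than $1$ as equivalence relations on the same set. Passing to $\ZZ/2n$-orbits gives $\operatorname{T}_3^2\circ\operatorname{T}_2^1=\operatorname{T}_3^1$. For the right triangle, I would use Lemma~\ref{Sfund}: applying $\S_2^1$ then $\S_3^2$... wait, the arrows go $\widehat{\mathfrak{G}_{(3)}}\to\widehat{\mathfrak{G}_{(2)}}\to\widehat{\mathfrak{G}_{(1)}}$, so I compute $\S_2^1\circ\S_3^2$ on a type-$3$ diagram $G_3$ and compare with $\S_3^1(G_3)$. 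Both expand $G_3$ as a weighted sum of the type-$1$ diagrams it contains; by Lemma~\ref{Sfund} the coefficient of $G_1$ on the left is $\sum_{G_2}\frac{|\operatorname{Aut}(G_3)|}{|\operatorname{Aut}(G_2)|}\cdot\frac{|\operatorname{Aut}(G_2)|}{|\operatorname{Aut}(G_1)|}$ over the (unique, since $\operatorname{T}_3^2\operatorname{T}_2^1=\operatorname{T}_3^1$) intermediate $G_2$, which telescopes to $\frac{|\operatorname{Aut}(G_3)|}{|\operatorname{Aut}(G_1)|}$, the coefficient on the right.

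\textbf{Part 2 (adjointness).} By bilinearity it suffices to check the identity when $\mathcal{G}_1=G_1$ and $\mathcal{G}_2=G_2$ run over the canonical bases. Using Lemma~\ref{Sfund} on the left-hand side and the definition \eqref{eq2} of $\langle,\rangle$, I get
\[
\langle\S_2^1(G_2),G_1\rangle=|\operatorname{Aut}(G_1)|\cdot\bigl(\S_2^1(G_2),G_1\bigr)=|\operatorname{Aut}(G_1)|\cdot\frac{|\operatorname{Aut}(G_2)|}{|\operatorname{Aut}(G_1)|}[\operatorname{T}_2^1(G_1)=G_2]=|\operatorname{Aut}(G_2)|\,[\operatorname{T}_2^1(G_1)=G_2],
\]
while on the right $\langle G_2,\operatorname{T}_2^1(G_1)\rangle=|\operatorname{Aut}(G_2)|\cdot(G_2,\operatorname{T}_2^1(G_1))=|\operatorname{Aut}(G_2)|\,[\operatorname{T}_2^1(G_1)=G_2]$, since $\operatorname{T}_2^1(G_1)$ is a single basis element. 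The two agree. The one subtlety to flag is the finiteness issue already noted after Definition~\ref{def:proj}: the pairing $\langle\S_2^1(\mathcal{G}_2),\mathcal{G}_1\rangle$ makes sense because $\mathcal{G}_1$ is a finite sum and $\S_2^1(\mathcal{G}_2)$ pairs against each $G_1$ through only finitely many terms, and $\langle\mathcal{G}_2,\operatorname{T}_2^1(\mathcal{G}_1)\rangle$ makes sense because $\operatorname{T}_2^1(\mathcal{G}_1)$ is again a finite sum; so no convergence problem arises.

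\textbf{Part 3 ($\operatorname{Im}\S_2^1=\operatorname{Ker}^\bot\operatorname{T}_2^1$).} The inclusion $\subseteq$ is immediate from part~2: if $\mathcal{G}=\S_2^1(\mathcal{G}_2)$ and $\operatorname{T}_2^1(\kappa)=0$, then $\langle\mathcal{G},\kappa\rangle=\langle\mathcal{G}_2,\operatorname{T}_2^1(\kappa)\rangle=0$. For $\supseteq$, I would describe $\operatorname{Ker}\operatorname{T}_2^1$ explicitly: since $\operatorname{T}_2^1$ is the linear map sending each type-$1$ basis vector $G_1$ to the type-$2$ basis vector $\operatorname{T}_2^1(G_1)$, its kernel is spanned by the differences $G_1-G_1'$ with $\operatorname{T}_2^1(G_1)=\operatorname{T}_2^1(G_1')$, i.e. the kernel has a basis indexed by "all but one $1$-class in each fibre". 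Hence a functional $\mathcal{G}$ vanishes on $\operatorname{Ker}\operatorname{T}_2^1$ iff, within each fibre $\operatorname{T}_2^1{}^{-1}(G_2)$, the quantity $\frac{1}{|\operatorname{Aut}(G_1)|}\times(\text{coefficient of }G_1\text{ in }\mathcal{G})$... more precisely iff the coefficients $(\mathcal{G},G_1)$ are all equal across each fibre for the pairing $(,)$; but because $\langle,\rangle$ rescales by $|\operatorname{Aut}(G_1)|$ I must be careful and instead phrase orthogonality with respect to $\langle,\rangle$. Working it out: $\mathcal{G}\in\operatorname{Ker}^\bot\operatorname{T}_2^1$ means $\langle\mathcal{G},G_1-G_1'\rangle=0$ whenever $\operatorname{T}_2^1(G_1)=\operatorname{T}_2^1(G_1')$, i.e. $|\operatorname{Aut}(G_1)|(\mathcal{G},G_1)=|\operatorname{Aut}(G_1')|(\mathcal{G},G_1')$, i.e. the number $|\operatorname{Aut}(G_1)|(\mathcal{G},G_1)$ depends only on $G_2=\operatorname{T}_2^1(G_1)$; call it $c(G_2)$. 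Then setting $\mathcal{G}_2:=\sum_{G_2}\frac{c(G_2)}{|\operatorname{Aut}(G_2)|}G_2$ and applying Lemma~\ref{Sfund} shows $\S_2^1(\mathcal{G}_2)$ has coefficient $\frac{c(G_2)}{|\operatorname{Aut}(G_2)|}\cdot\frac{|\operatorname{Aut}(G_2)|}{|\operatorname{Aut}(G_1)|}=\frac{c(G_2)}{|\operatorname{Aut}(G_1)|}=(\mathcal{G},G_1)$ on every $G_1$, so $\S_2^1(\mathcal{G}_2)=\mathcal{G}$.

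The main obstacle I anticipate is purely organizational rather than mathematical: keeping the two pairings $(,)$ and $\langle,\rangle$ straight throughout, since the symmetry factors $|\operatorname{Aut}(\cdot)|$ are exactly what make $\S$ and $\operatorname{T}$ adjoint for $\langle,\rangle$ (and \emph{not} for $(,)$), and making sure in part~3 that the functional $\mathcal{G}_2$ reconstructed from $\mathcal{G}$ genuinely lies in $\widehat{\mathfrak{G}_{(2)}}$ (a formal series, no finiteness needed here) and that $\S_2^1$ is defined on it. No step requires more than the definitions and Lemma~\ref{Sfund}.
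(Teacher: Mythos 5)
Your proposal is correct and follows essentially the same route as the paper: commutativity of the $\T$ triangle from the definitions, the $\S$ triangle via the telescoping of automorphism ratios from Lemma~\ref{Sfund}, adjointness by reduction to single basis diagrams with both sides equal to $\abs{\operatorname{Aut}(G_2)}$, and the reverse inclusion in part~3 by reconstructing $\mathcal{G}_2=\sum\frac{c(G_2)}{\abs{\operatorname{Aut}(G_2)}}G_2$ from the fibrewise-constant values of $\left\langle \mathcal{G},\cdot\right\rangle$. The only differences are expository (you spell out the uniqueness of the intermediate $G_2$ and the finiteness caveats more explicitly than the paper does).
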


\begin{proof}

$1.$ The first diagram commutes directly from the definition of the maps $\T_i^j$. As for the maps $\S_i^j$, since they are defined componentwise it is enough to check it for a single diagram $G_3$. In that case, it is a consequence of Lemma~\ref{Sfund} and the relation $\operatorname{T_3^1}=\operatorname{T_3^2}\circ \operatorname{T_2^1}.$

$2.$ In both sides, it is clear that only a finite number of terms in $\mathcal{G}_2$ are relevant, namely those that are projections of some terms of $\mathcal{G}_1$ under $\operatorname{T_2^1}$. Thus, by bilinearity, it is enough to consider single diagrams $G_1$ and $G_2$. If $G_2\neq \operatorname{T_2^1}(G_1)$, then both sides are $0$. 
If $G_2= \operatorname{T_2^1}(G_1)$, then $$\begin{array}{ccl}
\left\langle  \S_2^1\left(G_2\right),G_1\right\rangle & = & 
\left\langle  
\frac{\abs{\operatorname{Aut}(G_2)}}{\abs{\operatorname{Aut}(G_1)}}
G_1
,G_1\right\rangle \\
 & = & \abs{\operatorname{Aut}(G_2)},
\end{array}$$
while
$$\begin{array}{ccl}
\left\langle   G_2,
\T_2^1\left(
G_1\right)\right\rangle & = & \left\langle   G_2,G_2\right\rangle\\
& = & \abs{\operatorname{Aut}(G_2)}.\end{array}
$$

$3.$ The inclusion $\operatorname{Im} \S_2^1 \subset \operatorname{Ker}^\bot \operatorname{T_2^1}$ follows immediately from $2.$ For the converse, pick a Gauss diagram series $\mathcal{G}_1$ in $\operatorname{Ker}^\bot \operatorname{T_2^1}$. For any two $2$-related Gauss diagrams of type $1$, $G_1$ and $G_1^\prime$, one has
$$\left\langle   \mathcal{G}_1,G_1-G_1^\prime \right\rangle=0.$$
Thus, if $G_2$ is a Gauss diagram of type $2$, one can define $\phi(G_2)$ to be the value of $\left\langle   \mathcal{G}_1,G_1\right\rangle$ for any preimage $G_1$ of $G_2$ under $\operatorname{T_2^1}$, and set
$$\mathcal{G}_2=\sum \frac{\phi(G_2)}{\abs{\operatorname{Aut}(G_2)}}G_2,$$
where the sum runs over all Gauss diagrams of type $2$. Finally,
$$
\begin{array}{ccl}
\S_2^1(\mathcal{G}_2) & = & \sum \frac{\abs{\operatorname{Aut}(\operatorname{T_2^1}\left( G_1\right))}}{\abs{\operatorname{Aut}(G_1)}}
\frac{\phi(\operatorname{T_2^1}\left( G_1\right))}{\abs{\operatorname{Aut}(\operatorname{T_2^1}\left( G_1\right))}}G_1\\
 & = & \sum \frac{\left\langle  \mathcal{G}_1, G_1\right\rangle}{\abs{\operatorname{Aut}(G_1)}} G_1\\
 & = & \mathcal{G}_1
\end{array}.$$

\end{proof}

In practice, point $3$ is useful in both directions: whether one needs a characterization of the series that lie in the image of some map $\S$ (Lemma~\ref{ImS}), or of the series that define invariants under some kind of moves (Propoosition~\ref{prop:omega}). Point $2$ states that symmetry-preserving maps are the good dictionary to understand invariants that were defined via forgetful projections.

\begin{remark}\label{openbar}
Every construction and result in this subsection can be repeated by replacing the set of rigid Gauss diagrams of degree $n$ with any set endowed with the action of an abelian finite group.
\end{remark}

\subsubsection{Arrow diagrams and homogeneous invariants}\label{sec:arrows}

\begin{definition}[see \citep{P1,PolyakViro}]\label{def:arrowdiag}
An \textit{arrow diagram (on $\pi$)} is a Gauss diagram $G$ (on $\pi$) of which the signs decorating the arrows have been forgotten. As usual, it is considered up to homeomorphisms of the circle.\end{definition}

Arrow diagram spaces $\mathfrak{A}_{n}$, $\mathfrak{A}_{\leq n}$, $\mathfrak{A}$, the hat versions, and the pairings $(,)$ and $\left\langle  ,\right\rangle$ are defined similarly to their signed versions (Subsection \ref{sec:GDspaces}). We use notations $A$
for an arrow diagram and $\mathcal{A}$ for an \textit{arrow diagram series} -- \ie an element of $\widehat{\mathfrak{A}}$.

\subsubsection*{Arrow diagram formulas}

In the language of Subsection~\ref{sec:Sinj}, arrow diagrams are a kind of Gauss diagrams satisfying the degree and stability properties -- the equivalence relation on rigid Gauss diagrams is given by $G\sim G^\prime \Leftrightarrow$ one may pass from $G$ to $G^\prime$ by writhe changes. Therefore, an arrow diagram $A$ has a well-defined symmetry group $\operatorname{Aut}(A)$, and there are a symmetry-preserving map $\S_a:\widehat{\mathfrak{A}}\hookrightarrow \widehat{\mathfrak{G}}$ and a projection $\T_a:\mathfrak{G}\twoheadrightarrow\mathfrak{A}$. 
However, for the purpose of defining arrow diagram invariants, we are going to twist these maps a little, by pushing an additional sign into the weights:

\begin{definition}\label{twistedS}
Define the linear maps $S:\widehat{\mathfrak{A}}
\rightarrow
\widehat{\mathfrak{G}}$
and $T:\mathfrak{G}\rightarrow
\mathfrak{A}$ componentwise by
\begin{eqnarray}
S(A) & := & \sum_{\T_a(G)=A}\operatorname{sign}(G)\frac{\operatorname{Aut}(A)}{\operatorname{Aut}(G)}G
\\
T(G) & := & \operatorname{sign}(G)\T_a(G)\label{defT}
\end{eqnarray}
\end{definition}

\begin{proposition}\label{lem:ST} \ 
\begin{enumerate}
\item $S$ and $T$ are $\left\langle  ,\right\rangle$-adjoint, in the sense that 
$$
\begin{array}{cc}
\forall \,\, \mathcal{G}\in
\mathfrak{G},\, \mathcal{A}\in\widehat{
\mathfrak{A}},& \left\langle   S\left(\mathcal{A}
\right),\mathcal{G}\right\rangle =\left\langle  \mathcal{A},
T\left(
\mathcal{G}\right)\right\rangle.
\end{array}
$$
\item 
$\operatorname{Im} S = \operatorname{Ker}^\bot T.$

\end{enumerate}
\end{proposition}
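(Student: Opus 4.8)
The plan is to mirror the proof of Proposition~\ref{prop:ST}, whose parts $2.$ and $3.$ are the untwisted analogues of the two claims here; the only new ingredient is that the sign $\operatorname{sign}(G)$ has been absorbed into both maps, and one must check that it causes no harm. First I would establish part $1.$ by reducing to single diagrams. By bilinearity and the fact that for a fixed finite sum $\mathcal{G}$ only finitely many terms of $\mathcal{A}$ are relevant (those of the form $\T_a(G)$ for a term $G$ of $\mathcal{G}$), it suffices to compare $\left\langle S(A),G\right\rangle$ and $\left\langle A,T(G)\right\rangle$ for a single arrow diagram $A$ and a single Gauss diagram $G$. If $A\neq\T_a(G)$ both sides vanish. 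If $A=\T_a(G)$, then on the left only the $G$-component of $S(A)$ contributes, giving $\operatorname{sign}(G)\frac{\abs{\operatorname{Aut}(A)}}{\abs{\operatorname{Aut}(G)}}\left\langle G,G\right\rangle=\operatorname{sign}(G)\abs{\operatorname{Aut}(A)}$, using $\left\langle G,G\right\rangle=\abs{\operatorname{Aut}(G)}$ from \eqref{eq2}; on the right, $T(G)=\operatorname{sign}(G)A$, so $\left\langle A,T(G)\right\rangle=\operatorname{sign}(G)\left\langle A,A\right\rangle=\operatorname{sign}(G)\abs{\operatorname{Aut}(A)}$. The two agree. (One should note in passing that $\operatorname{Aut}(G)\subset\operatorname{Aut}(A)$, so the weights in $S$ are integers, and that $\operatorname{sign}$ is constant on each $\ZZ/2n$-orbit so both maps are well-defined on Gauss diagrams.)

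For part $2.$, the inclusion $\operatorname{Im}S\subset\operatorname{Ker}^\bot T$ is immediate from part $1.$: if $\mathcal{A}\in\widehat{\mathfrak{A}}$ and $\mathcal{G}\in\operatorname{Ker}T$, then $\left\langle S(\mathcal{A}),\mathcal{G}\right\rangle=\left\langle\mathcal{A},T(\mathcal{G})\right\rangle=0$. For the reverse inclusion I would imitate the argument in part $3.$ of Proposition~\ref{prop:ST}. Given $\mathcal{G}\in\operatorname{Ker}^\bot T$, I first observe that the kernel of $T$ is spanned by the elements $G-\operatorname{sign}(G)\operatorname{sign}(G')G'$ whenever $\T_a(G)=\T_a(G')$ — equivalently, $G$ and $G'$ differ by writhe changes — since $T(G)=\operatorname{sign}(G)\T_a(G)$ forces these combinations into $\operatorname{Ker}T$ and a dimension/spanning count shows there are no others. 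Hence for any two writhe-related $G,G'$ one has $\left\langle\mathcal{G},G\right\rangle=\operatorname{sign}(G)\operatorname{sign}(G')\left\langle\mathcal{G},G'\right\rangle$, so the quantity $\operatorname{sign}(G)\left\langle\mathcal{G},G\right\rangle$ depends only on $A:=\T_a(G)$; call it $\phi(A)$. Then I would set
\[
\mathcal{A}=\sum_A\frac{\phi(A)}{\abs{\operatorname{Aut}(A)}}A,
\]
the sum over all arrow diagrams, and compute $S(\mathcal{A})$ componentwise: the coefficient of $G$ in $S(\mathcal{A})$ is $\operatorname{sign}(G)\frac{\abs{\operatorname{Aut}(\T_a(G))}}{\abs{\operatorname{Aut}(G)}}\cdot\frac{\phi(\T_a(G))}{\abs{\operatorname{Aut}(\T_a(G))}}=\operatorname{sign}(G)\frac{\phi(\T_a(G))}{\abs{\operatorname{Aut}(G)}}=\frac{\left\langle\mathcal{G},G\right\rangle}{\abs{\operatorname{Aut}(G)}}$, which is exactly the coefficient of $G$ in $\mathcal{G}$ (since $\left\langle G,G\right\rangle=\abs{\operatorname{Aut}(G)}$). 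Hence $S(\mathcal{A})=\mathcal{G}$.

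I expect the main obstacle to be purely bookkeeping: making sure the signs cancel correctly in part $1.$ — in particular that $\operatorname{sign}$ appears once on each side rather than squaring away or doubling up — and confirming that $\operatorname{Ker}T$ really is spanned by the signed writhe-change relators rather than merely containing them, so that the functional $\phi$ is well-defined on all arrow diagrams. Both points are routine once one writes the maps out on the canonical basis; there is no genuine difficulty, only care with the twist. Everything else transfers verbatim from the proof of Proposition~\ref{prop:ST}, with $\frac{\abs{\operatorname{Aut}(G_2)}}{\abs{\operatorname{Aut}(G_1)}}$ replaced by $\operatorname{sign}(G)\frac{\abs{\operatorname{Aut}(A)}}{\abs{\operatorname{Aut}(G)}}$ throughout.
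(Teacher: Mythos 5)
Your proof is correct and takes exactly the approach the paper intends: the paper's own ``proof'' is just the remark that it is completely similar to that of Proposition~\ref{prop:ST}, and your write-up carries out that adaptation, with the $\operatorname{sign}(G)$ bookkeeping (it squares to $1$ and is constant on $\ZZ/2n$-orbits) handled correctly in both the adjointness computation and the construction of the preimage $\mathcal{A}$.
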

The proof is completely similar to that of Proposition~\ref{prop:ST}.

\begin{definition}
A Gauss diagram formula that lies in the image of the map $S$ is called an \textit{arrow diagram formula}.
\end{definition}

\subsubsection*{Homogeneous invariants}
\begin{definition}
For each $n\in \NN$, there is an orthogonal projection $p_n : \widehat{\mathfrak{G}}\rightarrow \widehat{\mathfrak{G}}_n$ with respect to the scalar product $\left\langle  ,\right\rangle$. For $\mathcal{G}\in \widehat{\mathfrak{G}}$, the \textit{principal part} of $\mathcal{G}$ is defined by $p_n (\mathcal{G})$, with $n=\operatorname{deg}(G)$.
$\mathcal{G}$ is called \textit{homogeneous} if it is equal to its principal part.
\end{definition}

\begin{figure}[h!]
\centering 
\psfig{file=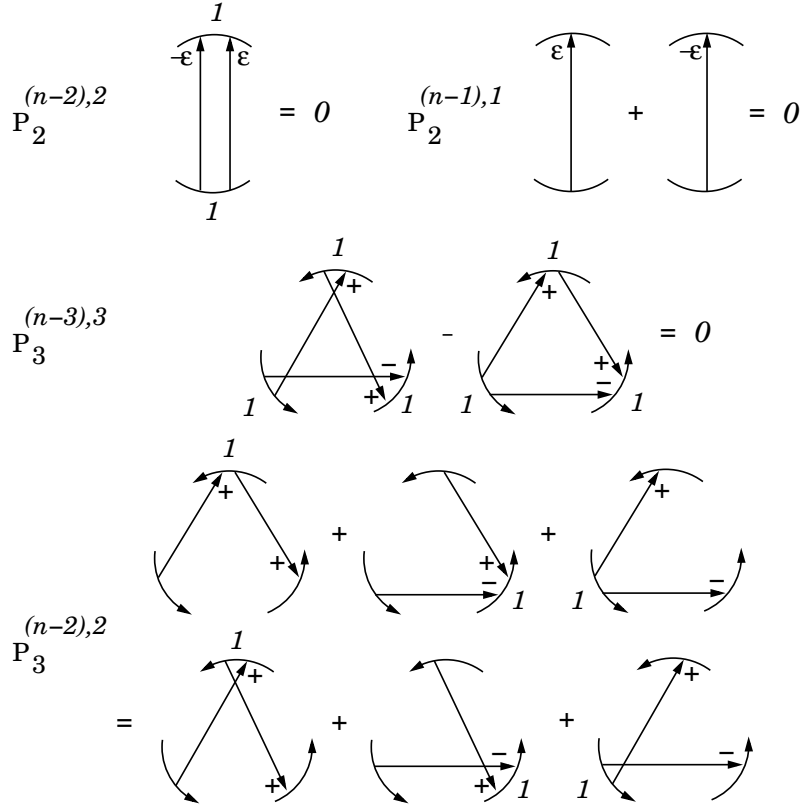,
scale=0.65}
\caption{Some homogeneous Polyak relations}\label{pic:homogeneous}
\end{figure}

\begin{figure}[h!]
\centering 
\psfig{file=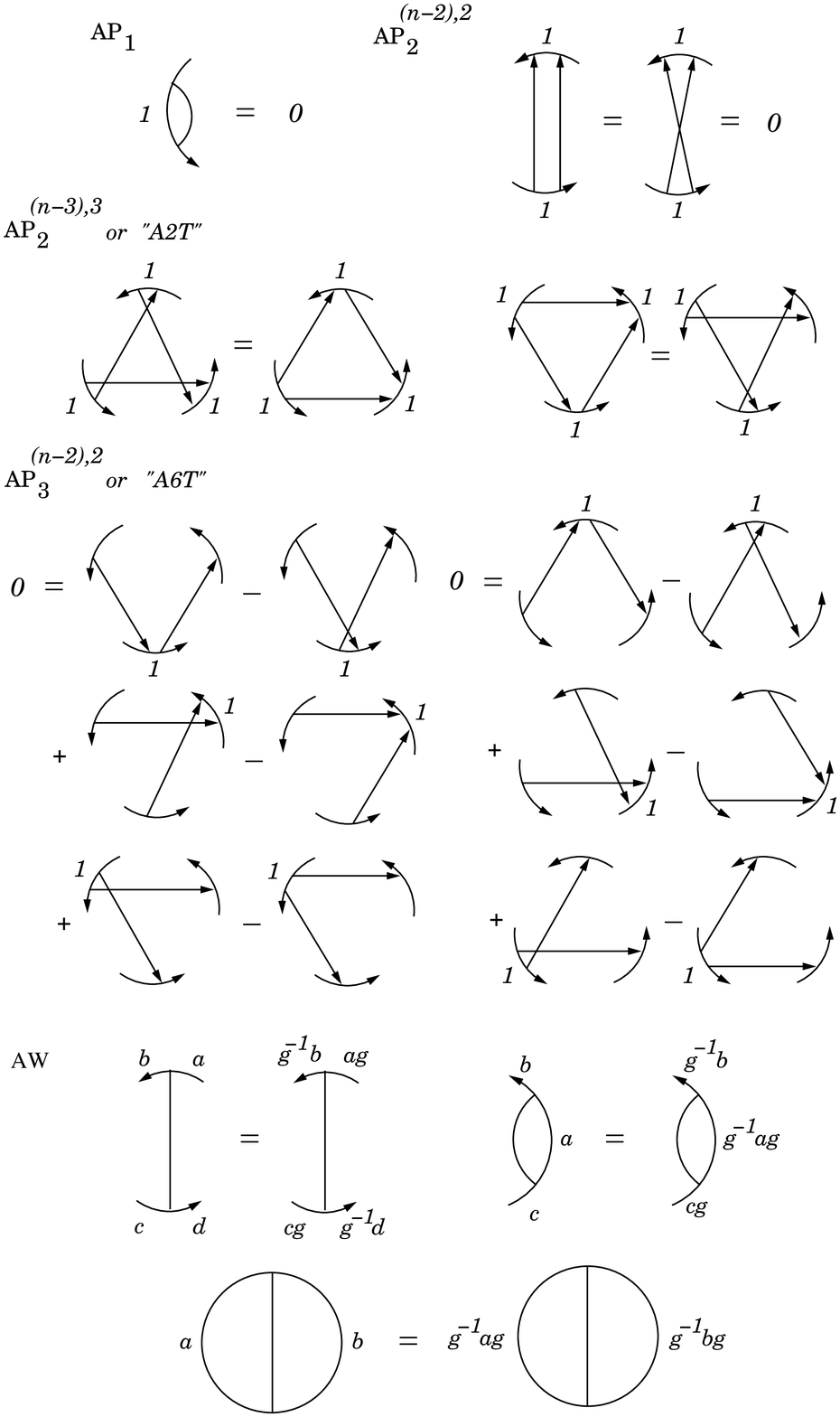,
scale=0.64}
\caption{The homogeneous arrow relations -- as usual, in $\mathrm{AW}$ the orientation of an arrow is reversed if and only if $w(g)=-1$.}\label{pic:Arelations}
\end{figure}

Let $\mathcal{G}$ be a homogeneous Gauss series. Then $\mathcal{G}$ satisfies the $\mathrm{P}_2$ and $\mathrm{P}_3$ relations (\ie $\left\langle  \mathcal{G},\mathrm{P}_i\right\rangle=0$ for $i=2,3$) if and only if it satisfies the \textit{homogeneous} relations $\left\langle  \mathcal{G},p_k(\mathrm{P}_i)\right\rangle=0$ for all $k$. These are denoted by  $\mathrm{P}_2^{(n-1),1}$, $\mathrm{P}_2^{(n-2),2}$, $\mathrm{P}_3^{(n-2),2}$ (or $G6T$) and $\mathrm{P}_3^{(n-3),3}$ (or $G2T$). The parenthesized numbers in exponent indicate in each case how many arrows are unseen. $\mathrm{P}_1$ and $\mathrm{W}$ relations are already homogeneous and do not get a new name. Some examples are shown on Fig.\ref{pic:homogeneous}; for a full list, just consider the projections of the relations of Fig.\ref{pic:Pmoves}.

Homogeneous relations are also defined for arrow diagram spaces, denoted by 
$\mathrm{AP}_1$,
$\mathrm{AP}_2^{(n-2),2}$, $\mathrm{AP}_3^{(n-2),2}$ (or $A6T$), $\mathrm{AP}_3^{(n-3),3}$ (or $A2T$) and $\mathrm{AW}$ (Lemma~\ref{ImS} below explains why $\mathrm{AP}_2^{(n-1),1}$ is useless: it reads $0=0$). They are the images under $T$ \eqref{defT} of the homogeneous relations for Gauss diagrams -- in particular one should be especially careful to the signs in the $A6T$ relations. A full list is presented in Fig.\ref{pic:Arelations}.

\begin{lemma}\label{SpanOrth}
Let $\mathcal{A}\in \widehat{\mathfrak{A}}$ and let $\mathrm{X}$ be a name among
$\mathrm{P}_1$, $\mathrm{P}_2^{(n-2),2}$, $\mathrm{P}_3^{(n-2),2}$, $\mathrm{P}_3^{(n-3),3}, \mathrm{W}$. Then
$$\mathcal{A}\in \operatorname{Span}^\perp(\mathrm{AX})\Longleftrightarrow S(\mathcal{A})\in \operatorname{Span}^\perp(\mathrm{X}).$$
where orthogonality is as usual in the sense of $\left\langle  ,\right\rangle$.
\end{lemma}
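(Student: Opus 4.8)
The plan is to exploit the adjointness between $S$ and $T$ established in Proposition~\ref{lem:ST}(1), together with the definition of the arrow relations $\mathrm{AX}$ as the images of the homogeneous Gauss relations under $T$. First I would unwind the meaning of each side. By definition of the (homogeneous) arrow relations, the span $\operatorname{Span}(\mathrm{AX})$ is exactly $T\bigl(\operatorname{Span}(\mathrm{X})\bigr)$: the relations $\mathrm{AP}_1$, $\mathrm{AP}_2^{(n-2),2}$, $A6T$, $A2T$, $\mathrm{AW}$ were \emph{defined} as $T$-images of $\mathrm{P}_1$, $\mathrm{P}_2^{(n-2),2}$, $\mathrm{P}_3^{(n-2),2}$, $\mathrm{P}_3^{(n-3),3}$, $\mathrm{W}$ respectively. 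So the statement to prove reads
$$
\mathcal{A}\in T\bigl(\operatorname{Span}(\mathrm{X})\bigr)^{\perp}\Longleftrightarrow S(\mathcal{A})\in\operatorname{Span}(\mathrm{X})^{\perp}.
$$

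Next I would verify this equivalence by a direct adjointness computation. For any relator $\rho\in\operatorname{Span}(\mathrm{X})$ one has, by Proposition~\ref{lem:ST}(1),
$$
\left\langle S(\mathcal{A}),\rho\right\rangle=\left\langle \mathcal{A},T(\rho)\right\rangle .
$$
If $\mathcal{A}\perp\operatorname{Span}(\mathrm{AX})=T(\operatorname{Span}(\mathrm{X}))$, the right-hand side vanishes for all such $\rho$, so $S(\mathcal{A})\perp\operatorname{Span}(\mathrm{X})$. Conversely, if $S(\mathcal{A})\perp\operatorname{Span}(\mathrm{X})$, then for every $T(\rho)\in\operatorname{Span}(\mathrm{AX})$ the left-hand side vanishes, hence $\left\langle\mathcal{A},T(\rho)\right\rangle=0$; since every element of $\operatorname{Span}(\mathrm{AX})$ is of the form $T(\rho)$, this gives $\mathcal{A}\perp\operatorname{Span}(\mathrm{AX})$. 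Both implications use only bilinearity of $\left\langle ,\right\rangle$ and the fact that $\operatorname{Span}(\mathrm{AX})$ is spanned by $T$-images, so convergence issues do not arise (only finitely many terms of $\mathcal{A}$ pair nontrivially with a given finite relator, and only finitely many terms of $S(\mathcal{A})$ pair nontrivially with a given $\rho$, exactly as in the proof of Proposition~\ref{prop:ST}(2)).

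The one point that genuinely needs care -- and which I expect to be the main obstacle -- is the claim that $\operatorname{Span}(\mathrm{AX})$ equals $T(\operatorname{Span}(\mathrm{X}))$ rather than being merely contained in it, and in particular that $T$ carries the generating set of $\mathrm{X}$-relations \emph{onto} the generating set of $\mathrm{AX}$-relations with the correct signs. This is where the sign twist in the definitions of $S$ and $T$ (the factor $\operatorname{sign}(G)$) matters: one must check that applying $T$ to a homogeneous Polyak relator, say an $8T$ relator or a $\mathrm{W}$ relator, reproduces precisely the $A6T$ (resp.\ $A2T$, $\mathrm{AW}$) relator drawn in Fig.\ref{pic:Arelations}, signs included, and that no relator degenerates to $0$ in a way that would shrink the span. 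The parenthetical remark in the statement -- that $\mathrm{AP}_2^{(n-1),1}$ reads $0=0$ and is therefore omitted -- is exactly an instance of such a degeneration, and I would note that it causes no problem: dropping a relator that is already $0$ does not change the span, hence does not change the orthogonal complement. Once this identification of spans is in hand, the equivalence follows formally from adjointness as above, and I would remark that this is the verbatim analogue, with the sign-twisted maps, of the implication $\operatorname{Im}S=\operatorname{Ker}^{\perp}T$ from Proposition~\ref{lem:ST}(2).
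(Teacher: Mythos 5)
Your proof is correct and follows the same route as the paper, which simply cites the adjointness of $S$ and $T$ (Proposition~\ref{lem:ST}, part 1) together with the fact that the $\mathrm{AX}$ relators are by definition the $T$-images of the $\mathrm{X}$ relators. Your extra care about the sign twist and the degenerate $\mathrm{AP}_2^{(n-1),1}$ case is a sensible elaboration of details the paper leaves implicit, but it does not change the argument.
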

\begin{proof}
It is a direct consequence of part $1.$ of Proposition~\ref{lem:ST}.
\end{proof}

\begin{lemma}\label{ImS}
Let $\mathcal{G}\in
\widehat{\mathfrak{G}}$. Then $\mathcal{G}$ lies in the image of the map $S:\widehat{\mathfrak{A}}\rightarrow
\widehat{\mathfrak{G}}$ if and only if $\mathcal{G}$ satisfies all the homogeneous relations $\left\langle  \mathcal{G},\mathrm{P}_2^{(n-1),1}\right\rangle=0$.
\end{lemma}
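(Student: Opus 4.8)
The plan is to reduce everything to the adjunction already in hand. By part~2 of Proposition~\ref{lem:ST} one has $\operatorname{Im}S=\operatorname{Ker}^\perp T$, so for $\mathcal{G}\in\widehat{\mathfrak{G}}$,
$$\mathcal{G}\in\operatorname{Im}S\iff \langle\mathcal{G},x\rangle=0\ \text{ for all }x\in\operatorname{Ker}T .$$
On the other hand the condition in the statement is exactly that $\mathcal{G}$ be $\langle\,,\rangle$-orthogonal to every $\mathrm{P}_2^{(n-1),1}$ relator. Since orthogonality to a set of vectors is the same as orthogonality to their span, the whole lemma therefore amounts to the equality of subspaces of $\mathfrak{G}$
$$\operatorname{Ker}T=\operatorname{Span}\bigl(\mathrm{P}_2^{(n-1),1}\bigr),$$
the span running over all degrees and all instances of that homogeneous relation. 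So it suffices to prove this identity.

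To compute the left-hand side, recall that $T(G)=\operatorname{sign}(G)\,\T_a(G)$ and that $\T_a$ merely forgets the arrow signs, hence preserves degree. Thus $\operatorname{Ker}T=\bigoplus_n\operatorname{Ker}(T|_{\mathfrak{G}_n})$, and $\mathfrak{G}_n$ is the $\langle\,,\rangle$-orthogonal direct sum, over arrow diagrams $A$ of degree $n$, of the subspaces $V_A=\operatorname{Span}\{G:\T_a(G)=A\}$, whose canonical basis is indexed by the hypercube $\{\pm1\}^n$ of sign assignments to the $n$ arrows of $A$. Restricted to $V_A$, $T$ is the rank-one map $\sum c_G G\mapsto\bigl(\sum_G\operatorname{sign}(G)\,c_G\bigr)A$, so $\operatorname{Ker}(T|_{V_A})$ is the hyperplane cut out by $\sum_G\operatorname{sign}(G)\,c_G=0$. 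Passing to the rescaled basis $\widetilde G:=\operatorname{sign}(G)\,G$, this is the augmentation kernel $\{\sum d_G\widetilde G:\sum_G d_G=0\}$, which is spanned by the differences $\widetilde G-\widetilde{G'}$ over edges $GG'$ of the hypercube graph — i.e. over pairs of Gauss diagrams differing in the sign of a single arrow — because that graph is connected. For such a pair $\operatorname{sign}(G')=-\operatorname{sign}(G)$, so $\widetilde G-\widetilde{G'}=\operatorname{sign}(G)(G+G')$. Summing over $A$ and $n$, $\operatorname{Ker}T$ is spanned by the elements $G+G'$ where $G$ is an arbitrary Gauss diagram and $G'$ is $G$ with the sign of one chosen arrow reversed.

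It remains to identify this spanning set with the $\mathrm{P}_2^{(n-1),1}$ relators. One inclusion is read off the definition: the ``$n-1$ unseen, $1$ seen'' part of a $\mathrm{P}_2$ relation (equivalently, of $I$ applied to an $\mathrm{R}_2$ relator) is what remains when exactly one of the two parallel, opposite-signed arrows of the $\mathrm{R}_2$ configuration is kept; deleting one or the other of these two arrows produces, by the ``merge $\rightsquigarrow$ multiply'' rule of Definition~\ref{def:GammaRm}, the same $\pi$-markings, so the two outcomes differ only in the sign of the surviving arrow of the pair — that is, they form a pair $G,G'$ as above (see Fig.~\ref{pic:homogeneous}). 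Conversely, every such pair arises this way: starting from $G$ and a chosen arrow $\alpha$, insert a second arrow parallel and adjacent to $\alpha$ with the opposite sign and both surrounding edges marked $1$; this is a legitimate $\mathrm{R}_2$ configuration, deleting the new arrow returns $G$ while deleting $\alpha$ returns $G'$. Hence $\operatorname{Span}(\mathrm{P}_2^{(n-1),1})=\operatorname{Ker}T$, and combining with the displayed equivalence above finishes the proof.

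The only step that is not a formality is this last identification: one has to look carefully at what the homogeneous relation $\mathrm{P}_2^{(n-1),1}$ actually is and check that, as its data vary, it sweeps out \emph{all} single-sign reversals with no hidden positional constraint, the $\pi$-markings genuinely matching under the two deletions. Everything else is the rank-one computation together with connectedness of the hypercube, plus the already-established Proposition~\ref{lem:ST}.
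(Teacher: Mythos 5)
Your proof is correct and follows the same route as the paper, which simply asserts that the $\mathrm{P}_2^{(n-1),1}$ relators span $\operatorname{Ker}T$ and then invokes point~2 of Proposition~\ref{lem:ST}. The only difference is that you supply the verification of that spanning claim (the rank-one/hypercube computation and the identification of single-sign-reversal pairs with the relators), which the paper leaves to the reader.
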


\begin{proof}
Notice that the $\mathrm{P}_2^{(n-1),1}$ relators span the kernel of the map $T$. Hence the result follows from point $2.$ of Proposition~\ref{lem:ST}.
\end{proof}

The following are proved in a particular case in \citep{MortierPolyakEquations} (Lemma~3.2 and Theorem~2.5); the proof can be readily adapted to the present situation.

\begin{lemma}\label{crucial2T6T} For all $n\geq 3$:
$$\operatorname{Span}(A2T) \subseteq \operatorname{Span}(A6T)\oplus \operatorname{Span}(\mathrm{AP}_2^{(n-2),2}).$$
\end{lemma}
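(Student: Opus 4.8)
The plan is to prove the inclusion $\operatorname{Span}(A2T) \subseteq \operatorname{Span}(A6T)\oplus \operatorname{Span}(\mathrm{AP}_2^{(n-2),2})$ by mimicking the classical Goussarov-Polyak-Viro argument that the $2T$ relations are consequences of the $6T$ relations together with the one-term (or in our homogeneous setting, two-term $\mathrm{P}_2$) relations, adapted to keep track of the $\pi$-markings and the $w$-decorations. Since everything here is homogeneous of degree $n$ and only a bounded number of arrows is \enquote{visible} in each relation, I would first reduce to the universal case where exactly the arrows appearing in the relations are present and the remaining $n-3$ arrows are spectators carrying fixed $\pi$-markings; by the \enquote{merge $\rightsquigarrow$ multiply} principle the markings on the spectator edges just get multiplied in, so they play no role in the linear algebra and can be suppressed from the notation.

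The core computation is then the purely combinatorial identity among arrow diagrams on three relevant arrows. I would take a fixed $A2T$ relator, which involves two arrow diagrams differing by moving one arrowhead past another across a region containing the head (or tail) of a third arrow, and express it as a telescoping sum: introduce one auxiliary diagram at a time in which the third arrow's relevant endpoint is slid around the circle, so that each consecutive difference is either an $A6T$ relator (when the sliding endpoint crosses a pair of endpoints of the other two arrows in the pattern of the $6T$ relation) or an $\mathrm{AP}_2^{(n-2),2}$ relator (when it crosses a head-tail pair of a single arrow, creating or destroying the \enquote{$\mathrm{R}_2$} bigon configuration). The bookkeeping that needs care is the sign rule inherited from $T$ in \eqref{defT}, i.e. the $\operatorname{sign}(G)$ twist built into the passage from Gauss diagrams to arrow diagrams: one must check that the signs in the telescoping differences are exactly those prescribed in the $A6T$ and $\mathrm{AP}_2^{(n-2),2}$ relations of Fig.\ref{pic:Arelations}, and that the directness of the sum (the $\oplus$, not just $+$) is respected — this amounts to noting that the $\mathrm{AP}_2^{(n-2),2}$ relators live in the part of $\mathfrak{A}$ where two arrowheads coincide on an edge, a subspace on which $A6T$ relators can be chosen to have no component, or more precisely invoking Lemma~\ref{ImS}/the kernel description of $T$ to see the two spans meet trivially.

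Because the statement is asserted to follow by adapting Lemma~3.2 and Theorem~2.5 of \citep{MortierPolyakEquations}, the honest shape of the proof is: recall that in the unmarked, un-$w$ setting this is exactly the cited result; observe that the $\pi$-markings are inert (spectators multiply, the three visible edges are forced to carry the unit by the definition of R-moves, hence of the relators), so the same telescoping works verbatim; and observe that $w$ does not enter at all since $\mathrm{W}$ is a separate relation and neither $A2T$, $A6T$, nor $\mathrm{AP}_2^{(n-2),2}$ involves arrow-orientation reversal. I expect the main obstacle to be purely notational rather than conceptual: writing the telescoping chain of auxiliary diagrams cleanly, and verifying that the sign twist $\operatorname{sign}(G)$ does not spoil the identity — concretely, checking that when the sliding endpoint passes an endpoint of another arrow the induced writhe/sign changes are consistent with the definitions of $A6T$ and $\mathrm{AP}_2^{(n-2),2}$, so that each elementary difference is genuinely one of the allowed relators and not some signed variant outside the two spans.
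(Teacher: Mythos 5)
Your proposal is correct and follows essentially the same route as the paper, which for this lemma gives no independent argument but points to Lemma~3.2 and Theorem~2.5 of the cited earlier work and asserts that the proof \enquote{can be readily adapted to the present situation}. Your two adaptation points --- that the $\pi$-markings are inert because the relators force the visible edges to carry $1$ while spectator markings merge multiplicatively on both sides of each relator, and that $w$ plays no role since none of $A2T$, $A6T$, $\mathrm{AP}_2^{(n-2),2}$ involves orientation reversal --- are exactly the content of that remark.
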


\begin{theorem}\label{thm:ArrowHomogeneous}
Arrow diagram formulas are exactly the linear combinations of homogeneous Gauss diagram formulas.
\end{theorem}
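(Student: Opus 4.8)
\medskip
\noindent
The plan is to express both sides as orthogonal complements (for the pairing $\left\langle\,,\right\rangle$) and to reduce the statement to one identity between relator subspaces of $\mathfrak{G}$. I read a \enquote{Gauss diagram formula} as a $\mathcal{G}\in\widehat{\mathfrak{G}}$ for which $\nu_\mathcal{G}$ is a virtual knot invariant, \ie (Proposition~\ref{thm:PolyakAlg}) with $\mathcal{G}\in\mathcal{R}^\perp$ where $\mathcal{R}:=\operatorname{Span}(\mathrm{P}_1,\mathrm{P}_2,\mathrm{P}_3,\mathrm{W})$, and a \enquote{linear combination of homogeneous Gauss diagram formulas} as a $\mathcal{G}$ all of whose homogeneous components $p_n(\mathcal{G})$ are themselves invariants (the natural reading, as both sides of the theorem contain series of unbounded degree). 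By part~$2$ of Proposition~\ref{lem:ST}, the arrow diagram formulas form $\operatorname{Im}(S)\cap\mathcal{R}^\perp=\operatorname{Ker}^\perp(T)\cap\mathcal{R}^\perp=(\operatorname{Ker}(T)+\mathcal{R})^\perp$. Writing $\mathcal{H}\subseteq\mathfrak{G}$ for the span of \emph{all} the homogeneous relators $\mathrm{P}_1,\mathrm{P}_2^{(n-1),1},\mathrm{P}_2^{(n-2),2},\mathrm{P}_3^{(n-2),2},\mathrm{P}_3^{(n-3),3},\mathrm{W}$ — a graded subspace $\mathcal{H}=\bigoplus_n\mathcal{H}_n$ — the criterion recalled before Figure~\ref{pic:homogeneous} says exactly that a homogeneous degree-$n$ series is an invariant iff it is orthogonal to $\mathcal{H}_n$, so the linear combinations of homogeneous Gauss diagram formulas form exactly $\mathcal{H}^\perp$. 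The theorem thus reduces to the algebraic identity $\operatorname{Ker}(T)+\mathcal{R}=\mathcal{H}$, after which one applies $(\,\cdot\,)^\perp$.

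\medskip
\noindent
The inclusion $\mathcal{H}\supseteq\operatorname{Ker}(T)+\mathcal{R}$ is the easy one. By the proof of Lemma~\ref{ImS}, $\operatorname{Ker}(T)=\operatorname{Span}(\mathrm{P}_2^{(n-1),1})$, one of the generating families of $\mathcal{H}$, and $\mathrm{P}_1,\mathrm{W}$ are two more; it remains to see that every $\mathrm{P}_2$- and every $\mathrm{P}_3$-relator lies in $\mathcal{H}$. This follows by unwinding $\operatorname{Span}(\mathrm{P}_i)=I(\operatorname{Span}(\mathrm{R}_i))$: each such relator is the finite sum of its homogeneous components, and those components are combinations of homogeneous $\mathrm{P}_i$-relators — a $\mathrm{P}_2$-relator splits as a $\mathrm{P}_2^{(n-2),2}$-relator plus a $\mathrm{P}_2^{(n-1),1}$-relator, a $\mathrm{P}_3$-relator into $\mathrm{P}_3^{(n-2),2}$- and $\mathrm{P}_3^{(n-3),3}$-relators of the several degrees it involves. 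Hence $\mathcal{R}\subseteq\mathcal{H}$, and in particular $T(\mathcal{R})\subseteq T(\mathcal{H})$.

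\medskip
\noindent
For the reverse inclusion, since $\operatorname{Ker}(T)+\mathcal{R}$ and $\mathcal{H}$ both contain $\operatorname{Ker}(T)$ and $T(\operatorname{Ker}(T)+\mathcal{R})=T(\mathcal{R})$, it is enough to prove $T(\mathcal{R})\supseteq T(\mathcal{H})$ (a subspace containing $\operatorname{Ker}(T)$ is recovered from its $T$-image). Since $T$ (Definition~\ref{twistedS}) carries each homogeneous Gauss relator to the corresponding homogeneous arrow relator of Figure~\ref{pic:Arelations} and annihilates the $\mathrm{P}_2^{(n-1),1}$-relators (the meaning of \enquote{$\mathrm{AP}_2^{(n-1),1}$ reads $0=0$}), one has $T(\mathcal{H})=\operatorname{Span}(\mathrm{AP}_1,\mathrm{AP}_2^{(n-2),2},\mathrm{A6T},\mathrm{A2T},\mathrm{AW})$; moreover $\operatorname{Span}(\mathrm{AP}_1)=T(\operatorname{Span}(\mathrm{P}_1))$, $\operatorname{Span}(\mathrm{AW})=T(\operatorname{Span}(\mathrm{W}))$, and $\operatorname{Span}(\mathrm{AP}_2^{(n-2),2})\subseteq T(\mathcal{R})$ because a $\mathrm{P}_2^{(n-2),2}$-relator differs from a $\mathrm{P}_2$-relator by an element of $\operatorname{Ker}(T)$. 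There remain $\mathrm{A6T}$ and $\mathrm{A2T}$: by Lemma~\ref{crucial2T6T}, $\operatorname{Span}(\mathrm{A2T})\subseteq\operatorname{Span}(\mathrm{A6T})\oplus\operatorname{Span}(\mathrm{AP}_2^{(n-2),2})$, so it suffices to land every $\mathrm{A6T}$-relator in $T(\mathcal{R})+\operatorname{Span}(\mathrm{AP}_2^{(n-2),2})$. This is the crux. A $\mathrm{P}_3$-relator $\rho$ formed over a set $Y$ of $m$ \enquote{unseen} arrows is honestly inhomogeneous: unwinding $I$ (and using that subdiagrams involving at most one of the three strands are unchanged by the move, hence cancel) one finds $T(\rho)=\sum_{B\subseteq Y}\bigl(\mathrm{A2T}_{B}+\mathrm{A6T}_{B}\bigr)$, where $\mathrm{A2T}_{B}$ and $\mathrm{A6T}_{B}$ are the $\mathrm{A2T}$- and $\mathrm{A6T}$-relators for the local picture of $\rho$ over the unseen subset $B\subseteq Y$. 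Substituting Lemma~\ref{crucial2T6T} to replace each occurring $\mathrm{A2T}_{B}$ by $\mathrm{A6T}$-relators modulo $\operatorname{Span}(\mathrm{AP}_2^{(n-2),2})$ and then inducting on $m$, one isolates each $\mathrm{A6T}$-relator individually inside $T(\mathcal{R})+\operatorname{Span}(\mathrm{AP}_2^{(n-2),2})$; this disentanglement is precisely the computation performed, in the special case treated there, in \citep{MortierPolyakEquations} (Lemma~3.2 and Theorem~2.5), and it adapts with no new idea. It is the step I expect to be the main obstacle: the genuine inhomogeneity of the $8T$ relators prevents reading \enquote{$\mathcal{G}$ kills $\mathrm{P}_3$} off degree by degree for free, and it is exactly the collapse of $8T$ onto $6T$ modulo $\mathrm{R}_2$-type relations (Lemma~\ref{crucial2T6T}), together with the sign care in $\mathrm{A6T}$ flagged after Figure~\ref{pic:Arelations}, that makes the bookkeeping close — everything else being formal manipulation of the adjunctions of Propositions~\ref{prop:ST} and~\ref{lem:ST} and of the homogeneity criterion.
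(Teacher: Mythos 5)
Your reduction of the statement to linear algebra is set up correctly: arrow diagram formulas are $(\operatorname{Ker}(T)+\mathcal{R})^\perp$, linear combinations of homogeneous formulas are $\mathcal{H}^\perp$, and your easy inclusion $\operatorname{Ker}(T)+\mathcal{R}\subseteq\mathcal{H}$ gives one direction of the theorem. The gap is in the crux paragraph. You reduce the other direction to the \emph{ungraded} subspace identity $\mathcal{H}\subseteq\operatorname{Ker}(T)+\mathcal{R}$ inside $\mathfrak{G}$, i.e.\ essentially to $\operatorname{Span}(\mathrm{A6T})\subseteq T(\mathcal{R})$, and propose to prove it by substituting Lemma~\ref{crucial2T6T} and \enquote{inducting on $m$}. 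But Lemma~\ref{crucial2T6T} rewrites an $\mathrm{A2T}$-relator of degree $m+3$ in terms of $\mathrm{A6T}$-relators of degree $m+3$, i.e.\ with $m+1$ unseen arrows; each of those can in turn only be produced inside $T(\mathcal{R})$ together with its companion $\mathrm{A2T}$-relator of degree $m+4$. The rewriting strictly increases the degree at every step and never terminates in $\mathfrak{A}$, which consists of \emph{finite} sums: your induction has no base case in either direction. In fact the subspace identity is false in general — look at the top-degree homogeneous part of any finite combination of $T(\mathrm{P}_3)$-relators: it is a combination of $\mathrm{A2T}$-parts (an $\mathrm{A6T}_\mu$ of top degree would drag along an $\mathrm{A2T}_\mu$ of higher degree), so such a combination is never a single $\mathrm{A6T}$-relator. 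What is true, and what the equality of orthogonal complements in $\widehat{\mathfrak{G}}$ actually requires, is only the equality of the truncations of $\mathcal{H}$ and $\operatorname{Ker}(T)+\mathcal{R}$ in each $\mathfrak{G}_{\leq n}$.

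Two ways to close the gap. Either run your recursion against a fixed $\mathcal{A}$ of degree $N$: all the companion $\mathrm{A2T}$-terms of degree $>N$ pair to zero, so the process stops after finitely many steps and yields $\langle S(\mathcal{A}),\mathrm{G6T}\rangle=0$ (this amounts to proving the truncated identity). Or, closer to the intended route, invoke Theorem~\ref{thm:main}: granted the $\mathrm{R}_2$ condition, $\mathrm{R}_3$-invariance is \emph{equivalent} to $\mathcal{A}\in\operatorname{Span}^\perp(\mathrm{A6T})$, and Lemma~\ref{crucial2T6T} then gives $\mathcal{A}\in\operatorname{Span}^\perp(\mathrm{A2T})$ for free; combined with your (correct) treatment of $\mathrm{AP}_1$, $\mathrm{AP}_2^{(n-2),2}$, $\mathrm{AW}$ and $\operatorname{Ker}(T)$, this shows $S(\mathcal{A})\perp\mathcal{H}$, hence each homogeneous component of $S(\mathcal{A})$ is an invariant. (The paper gives no written proof here — it defers to \citep{MortierPolyakEquations} — but Theorem~\ref{thm:main} and Lemma~\ref{crucial2T6T} are plainly the intended ingredients, and the $\mathrm{R}_3$ criterion of Theorem~\ref{thm:main} is precisely the nontrivial input that your purely formal bookkeeping cannot replace.)
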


\subsubsection{Based and degenerate diagrams}\label{subsec:bas_deg}

\begin{definition}\label{def:basdeg}
A \textit{based} Gauss diagram is a Gauss diagram together with a distinguished (\textit{base}) edge. Based arrow diagrams are defined similarly. The corresponding spaces are denoted by $\mathfrak{G}_\bullet$ and $\mathfrak{A}_\bullet$, in reference to the dot that we use in practice to pinpoint the distinguished edge.

A \textit{degenerate Gauss diagram (with one degeneracy)} is a Gauss diagram in which one edge, whose endpoints belonged to two different arrows, has been shrunk to a point. The spaces of degenerate diagrams are denoted by $\mathfrak{DG}$ and $\mathfrak{DA}$ respectively.
\end{definition}

Even though they would encode long knots in the classical theory, based diagrams have only a combinatorial interest here. On the other hand, degenerate diagrams have a natural topological interpretation that is explained in \citep{FT1cocycles}.

The space of degenerate arrow diagrams is meant to be quotiented by the so-called \textit{triangle relations}, shown in Fig.\ref{pic:triangle}. The quotient space is denoted by $\mathfrak{DA}/\nab$. These relations originated in the early work of M.Polyak on arrow diagrams (\citep{PolyakTalk, P1}, see also \cite{PVCasson}).

\begin{figure}[h!]
\centering 
\psfig{file=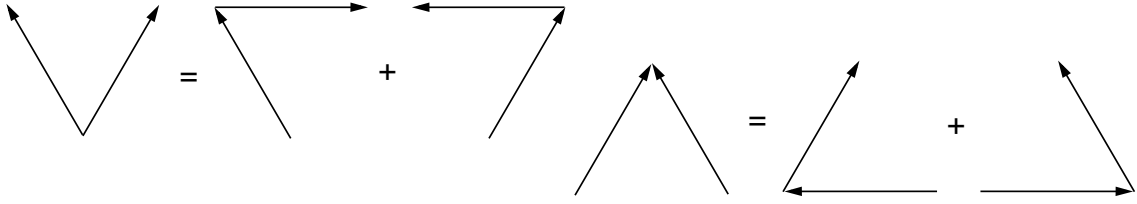,scale=0.67}
\caption{The triangle relations}\label{pic:triangle}
\end{figure}

\begin{definition}\label{def:monotonic}
Call a degenerate diagram \textit{monotonic} if an arrowhead and an arrowtail meet at the degenerate point.
\end{definition}

\begin{lemma}
$\widehat{\mathfrak{D}\mathfrak{A}/
\nab}$ is naturally isomorphic with the $\QQ$-space of formal series of monotonic arrow diagrams.
\end{lemma}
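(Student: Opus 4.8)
The plan is to identify the monotonic arrow diagrams with a basis of $\mathfrak{DA}/\nab$, working one degree at a time and then assembling the formal-series statement. Note first that for each fixed $n$ there are only finitely many degenerate arrow diagrams of degree $n$: there are finitely many arrow diagrams with $n$ arrows, and for each of them only finitely many edges (those joining two distinct arrows) can be shrunk. Hence $\mathfrak{DA}_{\le n}/\nab$ is finite-dimensional, the triangle relations of Fig.~\ref{pic:triangle} are homogeneous, and $\widehat{\mathfrak{D}\mathfrak{A}/\nab}$ is just the product over $n$ of the $\mathfrak{DA}_n/\nab$. So it is enough to prove, in each degree, that the monotonic diagrams form a basis of $\mathfrak{DA}_n/\nab$; the isomorphism with formal series of monotonic arrow diagrams then follows componentwise.

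So fix $n$. There are two things to establish: that the monotonic diagrams span $\mathfrak{DA}_n/\nab$, and that they stay linearly independent there. For the spanning statement I would set up a rewriting procedure. A non-monotonic degenerate diagram has, at its unique degenerate point, either two arrowheads or two arrowtails meeting; each relation of Fig.~\ref{pic:triangle}, read as a substitution rule that replaces such a configuration by the remaining terms, expresses it modulo $\nab$ as a $\QQ$-linear combination of degenerate diagrams that are either monotonic or strictly simpler with respect to a suitable well-founded order on diagrams (for instance, order by the cyclic distance, read off a fixed representative of the base circle, between the two arrow ends that have collided). Iterating this until no non-monotonic term survives shows that every degenerate diagram is, modulo $\nab$, a combination of monotonic ones. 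This simultaneously produces a linear map $r\colon\mathfrak{DA}_n\to\operatorname{Span}(\text{monotonic diagrams})$ that is the identity on monotonic diagrams and satisfies $D\equiv r(D)\pmod{\nab}$ for every $D$.

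For linear independence — equivalently, to see that $r$ does not depend on the choices made during the reduction and therefore descends to the quotient — it suffices to check that $r$ annihilates every triangle relator, i.e. that the rewriting system is confluent; this is a finite local verification on the relations of Fig.~\ref{pic:triangle}, in the style of the diamond lemma. Granting that, $r$ induces $\bar r\colon\mathfrak{DA}_n/\nab\to\operatorname{Span}(\text{monotonic diagrams})$, while inclusion of monotonic diagrams followed by the quotient map gives $\iota\colon\operatorname{Span}(\text{monotonic diagrams})\to\mathfrak{DA}_n/\nab$. Then $\bar r\circ\iota=\operatorname{id}$ because $r$ fixes monotonic diagrams, and $\iota\circ\bar r=\operatorname{id}$ because $D\equiv r(D)\pmod{\nab}$; hence $\iota$ is an isomorphism. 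Taking the product over all $n$ gives the claimed description of $\widehat{\mathfrak{D}\mathfrak{A}/\nab}$.

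The main obstacle is precisely the combinatorial bookkeeping packaged above as "termination $+$ confluence": one must choose the well-founded order so that each triangle relation, used as a reduction rule on its non-monotonic term, strictly decreases complexity, and then verify the diamond condition on overlapping reductions. This is where the precise shape of Fig.~\ref{pic:triangle} has to be used; once the reduction is organized, everything else — finite-dimensionality in each degree, homogeneity, the passage to the hat space — is routine.
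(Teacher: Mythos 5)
Your general strategy (exhibit the monotonic diagrams as a basis, degree by degree, via a reduction of non-monotonic diagrams) is the same as the paper's, but you have wrapped the decisive step in rewriting-system machinery --- a well-founded order, termination, and a diamond-lemma confluence check --- and then explicitly deferred that check (``Granting that\dots''). That deferred verification is not bookkeeping: it is the entire content of the lemma, and as written your argument does not establish it. So there is a gap, although not one that would be hard to close.

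What closes it, and what the paper's two-line proof rests on, is a structural observation about the triangle relations that makes your whole apparatus unnecessary: every non-monotonic degenerate diagram occurs in \emph{exactly one} triangle relation, and every triangle relation contains \emph{exactly one} non-monotonic diagram (the remaining terms being monotonic). Consequently each relator has the form $N - (\text{combination of monotonic diagrams})$ with the $N$'s ranging bijectively over the non-monotonic diagrams; the reduction terminates in a single step (no well-founded order is needed, since the replacement terms are already monotonic, not merely ``strictly simpler''), and there are no overlapping reductions to reconcile (each $N$ is rewritable in only one way), so confluence is vacuous. Linear algebra then gives immediately that the monotonic diagrams form a basis of $\mathfrak{DA}/\nab$, and the passage to formal series is componentwise as you say. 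In short: your framework is sound and would succeed, but the one fact you would have to verify to run it is precisely the observation that trivializes the problem, and your write-up anticipates complications (iterated reductions, overlaps) that the actual shape of Fig.~\ref{pic:triangle} rules out.
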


\begin{proof}
It suffices to show that the set of monotonic diagrams forms a basis of $\mathfrak{DA}/\nab$. It is clearly a generating set thanks to the $\nabla$ relations, and it is free because every non monotonic diagram happens in exactly one relation, and every relation contains exactly one of them.
\end{proof}

\subsection{Invariance criteria}\label{sec:polyak}

\subsubsection*{$w$-invariance}
\begin{proposition}
\label{prop:omega}
There is an injective \enquote{symmetry-preserving} map
$\S_{aw}^a:\widehat{\mathfrak{A}/\AW}\hookrightarrow
\widehat{\mathfrak{A}}$ defined componentwise by the formula
$$\alpha\mapsto\sum_{A\in\alpha}
\frac{\abs{\operatorname{Aut}(\alpha)}}{\abs{\operatorname{Aut}(A)}}A.$$
If $\mathcal{A}\in\widehat{\mathfrak{A}}$, then the map $G\mapsto  \left\langle \! \left\langle \mathcal{A},G \right\rangle\!\right\rangle= \left\langle  S(\mathcal{A}),I(G)\right\rangle$ is invariant under $w$-moves if and only if $\mathcal{A}$ lies in the image of $\S_{aw}^a$.
\end{proposition}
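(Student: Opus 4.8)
The plan is to run the whole argument inside the adjunction formalism of Subsection~\ref{sec:Sinj}. First I would dispose of the preliminary assertions: by Remark~\ref{openbar} the entire construction of that subsection applies to the set of rigid arrow diagrams of degree $n$ equipped with the rotation action of $\ZZ/2n$, and \enquote{being related by an $\mathrm{AW}$-move} is an equivalence relation on that set which is stable under rotations (conjugating the edges around an arrow, and possibly reversing that arrow, commutes with rotating the whole diagram). Since this relation is coarser than equality, Definition~\ref{def:Sinj}, Lemma~\ref{Sfund} and Proposition~\ref{prop:ST} immediately yield the map $\S_{aw}^a$ — injective, symmetry-preserving, given componentwise by the displayed formula — together with its $\langle,\rangle$-adjoint projection $\T_{aw}^a:\mathfrak{A}\twoheadrightarrow\mathfrak{A}/\AW$, whose kernel is spanned by the $\mathrm{AW}$-relators, and the identity $\operatorname{Im}\S_{aw}^a=\operatorname{Ker}^\perp\T_{aw}^a=\operatorname{Span}^\perp(\mathrm{AW})$.

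Second, I would translate \enquote{$w$-invariance} into a statement about $\operatorname{Span}(\mathrm{W})$, the span in $\mathfrak{G}$ of the $w$-move relators. The function $G\mapsto\langle S(\mathcal{A}),I(G)\rangle$ is a linear form on $\mathfrak{G}$, so it is invariant under $w$-moves precisely when it annihilates every $w$-relator, i.e.\ when $\langle S(\mathcal{A}),I(x)\rangle=0$ for all $x\in\operatorname{Span}(\mathrm{W})$ (each $w$-relator is homogeneous, hence a finite sum, so the pairing makes sense). By Proposition~\ref{thm:PolyakAlg}, $I$ restricts to an automorphism of $\operatorname{Span}(\mathrm{W})$ — here one uses that the $w$-relators for Gauss diagrams and the $\mathrm{W}$-relators for the Polyak algebra span the very same subspace of $\mathfrak{G}$ — so the condition is equivalent to $S(\mathcal{A})\in\operatorname{Span}^\perp(\mathrm{W})$.

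Third comes the one computational input: $T\bigl(\operatorname{Span}(\mathrm{W})\bigr)=\operatorname{Span}(\mathrm{AW})$, where $T$ is the twisted projection of Definition~\ref{twistedS}. The crucial observation is that a $w$-move leaves all writhes untouched, so for any $w$-move $G_1\leftrightsquigarrow G_2$ with conjugating element $g$ one has $\operatorname{sign}(G_1)=\operatorname{sign}(G_2)$, and therefore $T(G_1-G_2)$ equals, up to that common sign, $\T_a(G_1)-\T_a(G_2)$, which is exactly the $\mathrm{AW}$-relator with element $g$. Conversely, any $\mathrm{AW}$-relator $A_1-A_2$ lifts: choose arbitrary writhes to lift $A_1$ to $G_1\in\mathfrak{G}$, let $G_2$ be the result of the $w$-move with element $g$ on $G_1$, and note $\T_a(G_2)=A_2$ and $A_1-A_2=\pm T(G_1-G_2)$. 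With this identity in hand, the adjointness $\langle S(\mathcal{A}),x\rangle=\langle\mathcal{A},T(x)\rangle$ from Proposition~\ref{lem:ST} closes the loop: $S(\mathcal{A})\in\operatorname{Span}^\perp(\mathrm{W})$ iff $\mathcal{A}$ annihilates $T(\operatorname{Span}(\mathrm{W}))=\operatorname{Span}(\mathrm{AW})$, iff $\mathcal{A}\in\operatorname{Span}^\perp(\mathrm{AW})=\operatorname{Im}\S_{aw}^a$.

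I do not anticipate a genuine obstacle: the proof is a chain of equivalences built from lemmas already available. The two places demanding care — rather than ingenuity — are, first, the identity $T(\operatorname{Span}(\mathrm{W}))=\operatorname{Span}(\mathrm{AW})$, whose entire content is that $w$-moves preserve writhes, so that the $\operatorname{sign}$-twist in $S$ and $T$ plays no role; and second, the bookkeeping of finite sums versus formal series in each pairing, so that $\langle S(\mathcal{A}),I(G)\rangle$, $\langle S(\mathcal{A}),x\rangle$ for a relator $x$, and $\langle\mathcal{A},T(x)\rangle$ are all legitimately defined, and so that the appeal to Proposition~\ref{thm:PolyakAlg} really is about the subspace $\operatorname{Span}(\mathrm{W})$.
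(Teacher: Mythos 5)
Your proof is correct and follows essentially the same route as the paper: the construction of $\S_{aw}^a$ via the general framework of Subsection~\ref{sec:Sinj}, then the chain $w$-invariance $\Leftrightarrow S(\mathcal{A})\in\operatorname{Span}^\perp(\mathrm{W})$ (via Proposition~\ref{thm:PolyakAlg}) $\Leftrightarrow \mathcal{A}\in\operatorname{Span}^\perp(\mathrm{AW})$ (the $\mathrm{W}$ part of Lemma~\ref{SpanOrth}) $\Leftrightarrow \mathcal{A}\in\operatorname{Im}\S_{aw}^a$ (point $3$ of Proposition~\ref{prop:ST}). The only difference is that you re-derive the content of Lemma~\ref{SpanOrth} inline, via the identity $T(\operatorname{Span}(\mathrm{W}))=\operatorname{Span}(\mathrm{AW})$, where the paper simply cites that lemma.
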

\begin{proof}
The equivalence relation defined by $\mathrm{AW}$-moves and writhe changes on the set of rigid Gauss diagrams on $\pi$ satisfies the degree and stability properties from Subsection~\ref{sec:Sinj}. Hence one may apply the results of that section to get the existence and elementary properties of $\S_{aw}^a$. The last assertion follows from sucessive application of the $\mathrm{W}$ part of Lemma~\ref{SpanOrth} and point $3$ of Proposition~\ref{prop:ST}.
\end{proof}

This means that an arrow diagram formula must be represented by a series of $w$-orbits of arrow diagrams. In practice, this condition is most of time satisfied by construction. An important example is the formal sum of all elements contained in a set that is stable under $w$-moves.

\subsubsection*{$\mathrm{R}_1$ and $\mathrm{R}_2$ invariance}

\begin{proposition}
\label{thm:R1et2}
Let $\mathcal{A}\in \widehat{\mathfrak{A}}$. Then the function $G \mapsto\left\langle  S(\mathcal{A}),I(G)\right\rangle$ is invariant under $\mathrm{R}_1$ (\resp $\mathrm{R}_2$) moves if and only if $\mathcal{A}$ satisfies all $\mathrm{AP}_1$ (\resp $\mathrm{AP}_2^{
(n-2),2}$) relations. \end{proposition}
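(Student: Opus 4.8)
The plan is to reduce the statement to the analysis of the Polyak algebra already carried out in Proposition~\ref{thm:PolyakAlg}, combined with the adjunction properties of $S$ and $T$ from Proposition~\ref{lem:ST}. First recall that by definition $\left\langle S(\mathcal{A}),I(G)\right\rangle$ is the invariant $\nu$ attached to the Gauss series $S(\mathcal{A})\in\widehat{\mathfrak{G}}$. By the general principle underlying the Polyak algebra, $\nu$ is invariant under $\mathrm{R}_i$-moves if and only if $\left\langle S(\mathcal{A}),I(\rho)\right\rangle=0$ for every $\mathrm{R}_i$-relator $\rho$, i.e. if and only if $S(\mathcal{A})\perp I(\operatorname{Span}(\mathrm{R}_i))$. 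By the refined statement of Proposition~\ref{thm:PolyakAlg}, $I$ maps $\operatorname{Span}(\mathrm{R}_i)$ isomorphically onto $\operatorname{Span}(\mathrm{P}_i)$, so this is equivalent to $S(\mathcal{A})\in\operatorname{Span}^\perp(\mathrm{P}_i)$ for $i=1,2$.

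The second step is to transfer this orthogonality condition across the map $S$. Here I would invoke Lemma~\ref{SpanOrth}, which states precisely that for $\mathrm{X}\in\{\mathrm{P}_1,\mathrm{P}_2^{(n-2),2}\}$ one has $\mathcal{A}\in\operatorname{Span}^\perp(\mathrm{AX})\Leftrightarrow S(\mathcal{A})\in\operatorname{Span}^\perp(\mathrm{X})$, a direct consequence of the $\left\langle,\right\rangle$-adjunction of $S$ and $T$ (Proposition~\ref{lem:ST}, part~1) together with the fact that $\mathrm{AX}=T(\mathrm{X})$. For $i=1$ this is immediate since $\mathrm{P}_1$ is already homogeneous, so $\operatorname{Span}^\perp(\mathrm{P}_1)$-membership of $S(\mathcal{A})$ is equivalent to $\mathcal{A}$ satisfying all $\mathrm{AP}_1$ relations. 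For $i=2$ there is one subtlety: the relators in $\operatorname{Span}(\mathrm{P}_2)$ are not homogeneous, splitting into the $\mathrm{P}_2^{(n-1),1}$ and $\mathrm{P}_2^{(n-2),2}$ pieces. But $S(\mathcal{A})$ automatically lies in $\operatorname{Im}S$, which by Lemma~\ref{ImS} is exactly the set of series orthogonal to all $\mathrm{P}_2^{(n-1),1}$ relators; hence $S(\mathcal{A})\perp\operatorname{Span}(\mathrm{P}_2)$ reduces to $S(\mathcal{A})\perp\operatorname{Span}(\mathrm{P}_2^{(n-2),2})$, and then Lemma~\ref{SpanOrth} gives the equivalence with $\mathcal{A}$ satisfying all $\mathrm{AP}_2^{(n-2),2}$ relations.

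Chaining these equivalences yields the proposition: $\nu$ is $\mathrm{R}_1$-invariant $\Leftrightarrow\mathcal{A}$ satisfies all $\mathrm{AP}_1$ relations, and $\nu$ is $\mathrm{R}_2$-invariant $\Leftrightarrow\mathcal{A}$ satisfies all $\mathrm{AP}_2^{(n-2),2}$ relations. The main obstacle I anticipate is bookkeeping rather than conceptual: one must be careful that the homogeneity argument via Lemma~\ref{ImS} is genuinely needed to discard the $\mathrm{P}_2^{(n-1),1}$ component (this is the content of the parenthetical remark in the text that $\mathrm{AP}_2^{(n-1),1}$ "reads $0=0$"), and that the signs built into $S$ and $T$ (Definition~\ref{twistedS}) are consistent with the signs appearing in the $A6T$/$\mathrm{AP}_2$ relations—but for $\mathrm{R}_1$ and $\mathrm{R}_2$ the sign issues are mild, and the whole argument is essentially a formal diagram chase through the maps already set up.
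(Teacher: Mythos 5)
Your proposal is correct and follows essentially the same route as the paper's proof: reduce $\mathrm{R}_i$-invariance to orthogonality to $\operatorname{Span}(\mathrm{P}_i)$ via Proposition~\ref{thm:PolyakAlg}, use Lemma~\ref{ImS} to discard the $\mathrm{P}_2^{(n-1),1}$ component since $S(\mathcal{A})\in\operatorname{Im}S$, and conclude with the adjunction-based Lemma~\ref{SpanOrth}. The extra bookkeeping you spell out (homogeneity of $\mathrm{P}_1$, the splitting of $\mathrm{P}_2$) is exactly what the paper's terser proof leaves implicit.
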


\begin{proof}
By Proposition~\ref{thm:PolyakAlg}, the $\mathrm{R}_1$ and $\mathrm{R}_2$ invariance are equivalent to the relations $\left\langle  S(\mathcal{A}),\mathrm{P}_1\right\rangle=0$ and $\left\langle  S(\mathcal{A}),\mathrm{P}_2\right\rangle=0$ being satisfied. Lemma~\ref{ImS} implies that the second of these relations is actually equivalent to $\langle   S(\mathcal{A}),\mathrm{AP}_2^{(n-2),2}
\rangle =0$. Lemma~\ref{SpanOrth} concludes the proof.
\end{proof}

In practice, these conditions are easy to check naked-eye.

\subsubsection*{$\mathrm{R}_3$ invariance}

\begin{definition}\label{def:nice}
Say that a based diagram $A_\bullet$ is \textit{nice} if its base edge
\begin{itemize}
\item is decorated by $1$, 
\item is bounded by the endpoints of two different arrows. 
\end{itemize}
\end{definition}

\begin{lemma}\label{epsilonbased}
Nice based diagrams have a sign $\varepsilon$ induced by Definition~\ref{def_epsilon}.
\end{lemma}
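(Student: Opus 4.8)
The plan is to \emph{define} the sign of a nice based diagram $A_\bullet$ to be $\varepsilon(e_0)$, where $e_0$ is the base edge, and then to check that this is legitimate; the verification is short. First I would confirm that Definition~\ref{def_epsilon} genuinely applies to $e_0$: that definition attaches $\eta(e)$, $\uparrow\!\!(e)$, and hence $\varepsilon(e)=\eta(e)\cdot(-1)^{\uparrow(e)}$, to any edge $e$ lying between two consecutive arrow ends not belonging to the same arrow. The second bullet of Definition~\ref{def:nice} demands exactly this of $e_0$ (in particular the two bounding arrows are distinct, so \enquote{they cross each other} is a well-posed alternative), so $\varepsilon(e_0)$ is defined. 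The first bullet of Definition~\ref{def:nice}, that $e_0$ be marked by $1\in\pi$, is not used here; it is imposed for later compatibility with the combinatorial conditions governing $\mathrm{R}_3$-type moves.

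Second I would stress that $\varepsilon(e_0)$ involves none of the data forgotten in passing from a Gauss diagram to an arrow diagram: $\eta(e_0)$ records only whether the two chords bounding $e_0$ interleave along the circle, and $\uparrow\!\!(e_0)\in\{0,1,2\}$ counts only how many of the two arrow ends adjacent to $e_0$ are heads. The writhes of those arrows play no role — in contrast to $\mathrm{w}(e_0)$ and $\mathrm{w}(e_0)\varepsilon(e_0)$, which do \emph{not} descend to arrow diagrams, but which are not part of the claim. Hence $\varepsilon$ is meaningful on based \emph{arrow} diagrams, which is exactly what the lemma is after.

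Third, for well-definedness, a based arrow diagram is an equivalence class of such a configuration modulo (positive) homeomorphisms of the circle fixing the distinguished edge; any such homeomorphism carries the two arrow ends adjacent to $e_0$ to the two arrow ends adjacent to its image, preserves the head/tail type of each, and preserves the interleaving relation among chords. Consequently $\eta(e_0)$ and $\uparrow\!\!(e_0)$, and therefore $\varepsilon(e_0)$, depend only on the class of $A_\bullet$. This finishes the argument; I do not expect a real obstacle, the only thing worth recording being that $\varepsilon$, unlike $\mathrm{w}$, survives the forgetful map from Gauss diagrams to arrow diagrams.
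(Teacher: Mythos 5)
Your argument is correct and is essentially the paper's own proof: the key observation in both is that $\varepsilon(e)=\eta(e)\cdot(-1)^{\uparrow(e)}$ depends only on the interleaving of the two bounding arrows and the head/tail count, not on the writhes, so it survives the passage from Gauss diagrams to arrow diagrams with a preferred edge. Your extra remarks (that niceness guarantees Definition~\ref{def_epsilon} applies, and that the value is invariant under circle homeomorphisms) just make explicit what the paper leaves implicit.
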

\begin{proof}
The sign $\varepsilon$ from Definition~\ref{def_epsilon} takes as an argument the edge of a Gauss diagram. Since it does not depend on the writhes of the arrows, it is well-defined for arrow diagrams with a preferred edge.
\end{proof}

\begin{definition}\label{def:delta}
Let $A_\bullet$ be a based arrow diagram. If $A_\bullet$ is not nice, then put $\delta (A_\bullet)=0.$
If $A_\bullet$ is nice, then
\begin{enumerate}
\item Shrink the base edge to a point.
\item Multiply the resulting degenerate diagram by $\varepsilon(A_\bullet)$, and call the result $\delta (A_\bullet).$

\end{enumerate}
This process defines a map $\delta:\widehat{\mathfrak{A}_\bullet} \rightarrow \widehat{\mathfrak{D}
\mathfrak{A}}$ -- well defined since any monotonic diagram has finitely many preimages.
Now let $A$ be an arrow diagram, and denote by $\bullet(A)\in \mathfrak{A}_\bullet$ the sum of all based diagrams that one can form by choosing a base edge in $A$. Finally, define
$$
\begin{array}{cccc}
d: & \widehat{\mathfrak{A}} & \rightarrow & \widehat{\mathfrak{DA}} \\
 & \mathcal{A} & \mapsto & 
\delta (\bullet(\mathcal{A}))
\\
\end{array}.
$$
\end{definition}

\begin{theorem}\label{thm:main}
Let $\mathcal{A}\in \widehat{\mathfrak{A}}$ satisfy the $\mathrm{R}_2$ invariance condition from Proposition~\ref{thm:R1et2}. Then the following are equivalent:
\begin{itemize}
\item The map $G\mapsto\left\langle  S(\mathcal{A}),I(G)\right\rangle$ is invariant under $\mathrm{R}_3$-moves.
\item $d(\mathcal{A})=0$ modulo the triangle relations.
\item $\mathcal{A}\in \operatorname{Span}^\perp (A6T)$.
\end{itemize}
\end{theorem}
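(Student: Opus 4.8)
The plan is to prove the two equivalences $(1)\Leftrightarrow(3)$ and $(3)\Leftrightarrow(2)$ separately. The first is a formal chase through the Polyak-algebra machinery of this section (this is where the $\mathrm{R}_2$-hypothesis from Proposition~\ref{thm:R1et2} is spent), while the second is a purely combinatorial dictionary, valid even without that hypothesis, between the operator $d$ and the $A6T$ relators.

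\emph{Proof of $(1)\Leftrightarrow(3)$.} By Proposition~\ref{thm:PolyakAlg}, $I$ carries $\operatorname{Span}(\mathrm{R}_3)$ onto $\operatorname{Span}(\mathrm{P}_3)$, so $G\mapsto\langle S(\mathcal{A}),I(G)\rangle$ is $\mathrm{R}_3$-invariant if and only if $S(\mathcal{A})\in\operatorname{Span}^\perp(\mathrm{P}_3)$. The first thing to record is that $S(\mathcal{A})$ is in fact orthogonal to the \emph{whole} $\mathrm{P}_2$ relation: it satisfies $\mathrm{P}_2^{(n-1),1}$ automatically because it lies in $\operatorname{Im}S$ (Lemma~\ref{ImS}), and it satisfies $\mathrm{P}_2^{(n-2),2}$ by the $\mathrm{R}_2$-hypothesis together with Lemma~\ref{SpanOrth}; since these are the homogeneous constituents of $\mathrm{P}_2$, we get $S(\mathcal{A})\in\operatorname{Span}^\perp(\mathrm{P}_2)$. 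Under this standing orthogonality, $S(\mathcal{A})\in\operatorname{Span}^\perp(\mathrm{P}_3)$ is equivalent to orthogonality to both homogeneous pieces $\mathrm{P}_3^{(n-2),2}$ ($G6T$) and $\mathrm{P}_3^{(n-3),3}$ ($G2T$) of the $8T$ relation. Translating via Lemma~\ref{SpanOrth}, this says $\mathcal{A}\in\operatorname{Span}^\perp(A6T)\cap\operatorname{Span}^\perp(A2T)$. Finally Lemma~\ref{crucial2T6T}, which gives $\operatorname{Span}(A2T)\subseteq\operatorname{Span}(A6T)\oplus\operatorname{Span}(\mathrm{AP}_2^{(n-2),2})$, combined once more with the $\mathrm{R}_2$-hypothesis $\mathcal{A}\in\operatorname{Span}^\perp(\mathrm{AP}_2^{(n-2),2})$, shows that the $A2T$-condition is redundant: $\mathcal{A}\in\operatorname{Span}^\perp(A6T)$ already forces $\mathcal{A}\in\operatorname{Span}^\perp(A2T)$. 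Hence $(1)\Leftrightarrow\mathcal{A}\in\operatorname{Span}^\perp(A6T)$, which is $(3)$.

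\emph{Proof of $(3)\Leftrightarrow(2)$.} The operator $d=\delta\circ\bullet$ of Definition~\ref{def:delta} is built precisely to package the pairings of $\mathcal{A}$ against $A6T$ relators. Writing $\mathcal{A}=\sum_A c_A A$, one reads off from Definition~\ref{def:delta} that for a monotonic degenerate diagram $\Delta$ the coefficient of $\Delta$ in $d(\mathcal{A})$ equals $\sum\varepsilon(A_\bullet)\,c_A$, the sum running over the nice based diagrams $A_\bullet$ whose base edge collapses to $\Delta$. I would then check that, once $\Delta$ ranges over the monotonic basis of $\widehat{\mathfrak{DA}/\nab}$, these linear functionals on $\widehat{\mathfrak{A}}$ are exactly the $A6T$ relators read as functionals: the niceness conditions (base edge decorated by $1$, bounded by two distinct arrows) are precisely the conditions under which an edge can be a surrounded edge of an $\mathrm{R}_3$ pattern; the sign $\varepsilon$ supplied by $\delta$ is the sign occurring in $A6T$ (Lemma~\ref{epsilonbased}); collapsing the base edge forgets exactly the datum of which strand goes over (the $\uparrow(e)$ datum), and the triangle relations $\nab$ are precisely the identities relating the several $A6T$ relators that collapse to one degenerate diagram. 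Consequently $\mathcal{A}\in\operatorname{Span}^\perp(A6T)$ if and only if every coefficient of $d(\mathcal{A})$ in the monotonic basis vanishes, i.e.\ $d(\mathcal{A})=0$ modulo $\nab$.

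The step I expect to be the real work is the last one: verifying that the sign $\varepsilon$ produced when an edge is collapsed reproduces, with no discrepancy, the notoriously sign-sensitive $A6T$ relation, and that the triangle relations account for exactly the redundancy among $A6T$ relators coming from the choice of over-strand — no relator omitted and none counted twice. By contrast, $(1)\Leftrightarrow(3)$ is a formal manipulation of $I$, $S$, $T$ and the span inclusions already recorded in Proposition~\ref{thm:PolyakAlg} and Lemmas~\ref{SpanOrth}, \ref{ImS}, \ref{crucial2T6T}; its only delicate point is the passage from the inhomogeneous $8T$ relation to its homogeneous $6T$ piece modulo $\mathrm{P}_2$, which is exactly where the $\mathrm{R}_2$-hypothesis is used.
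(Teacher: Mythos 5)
First, a caveat: the paper does not actually prove this theorem — it only says the proof is similar to that of Theorem~3.6 of \citep{MortierPolyakEquations} — so your proposal cannot be checked against an in-paper argument; it must stand on its own. Your architecture is reasonable, and the direction $(3)\Rightarrow(1)$ is correct as written: $\mathcal{A}\in\operatorname{Span}^\perp(A6T)$ together with the $\mathrm{R}_2$-hypothesis gives $\mathcal{A}\in\operatorname{Span}^\perp(A2T)$ by Lemma~\ref{crucial2T6T}, hence $S(\mathcal{A})$ is orthogonal to both homogeneous pieces of every $\mathrm{P}_3$ relator by Lemma~\ref{SpanOrth}, hence to $\mathrm{P}_3$ itself, and Proposition~\ref{thm:PolyakAlg} converts this into $\mathrm{R}_3$-invariance.

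The gap is in $(1)\Rightarrow(3)$, at the sentence \enquote{Under this standing orthogonality, $S(\mathcal{A})\in \operatorname{Span}^\perp (\mathrm{P}_3)$ is equivalent to orthogonality to both homogeneous pieces.} A single $\mathrm{P}_3$ relator with $m$ unseen arrows is the sum of a $G2T$ piece of degree $m+3$ and a $G6T$ piece of degree $m+2$, and these two pieces are rigidly linked: one cannot vary one without the other. The paper's decomposition of $\langle \mathcal{G},\mathrm{P}_i\rangle=0$ into homogeneous conditions is stated only for \emph{homogeneous} $\mathcal{G}$, and $S(\mathcal{A})$ is not homogeneous for a general $\mathcal{A}\in\widehat{\mathfrak{A}}$; orthogonality to $\mathrm{P}_2$ does not by itself decouple the two terms, since they pair with different graded components of $\mathcal{A}$. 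The fix is a downward induction on degree: for relators with $\operatorname{deg}(\mathcal{A})-2$ unseen arrows the $G2T$ piece lives in a degree where $\mathcal{A}$ vanishes, so one gets $\langle S(p_n\mathcal{A}),G6T\rangle=0$ outright; Lemma~\ref{crucial2T6T} and the $\mathrm{R}_2$-hypothesis then kill $\langle S(p_n\mathcal{A}),G2T\rangle$, and the relators with one fewer unseen arrow isolate the next graded piece, and so on. You should either carry out this induction or reduce to the homogeneous case via Theorem~\ref{thm:ArrowHomogeneous}. Separately, your $(3)\Leftrightarrow(2)$ is a plan rather than a proof: you correctly identify that all the content lies in checking that the sign $\varepsilon$ produced by $\delta$ reproduces the signs of the $A6T$ relators and that the triangle relations account exactly for the redundancy among them, but you defer precisely that verification, which is the part of the argument where sign errors typically occur and which cannot be waved through.
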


The proof is similar to that of Theorem~3.6 from \citep{MortierPolyakEquations}.

\subsubsection{Invariance criterion for $w$-orbits}

As we have seen, an arrow diagram series defines an invariant only if it has a preimage in $\widehat{\mathfrak{A}/\AW}$. The above criteria for invariance under $\mathrm{R}$-moves nicely extend in terms of that preimage. This is especially interesting when $w$-orbits are well understood, for instance if $\pi$ is abelian and $w$ is trivial (Proposition~\ref{prop:abelian}).

\subsubsection*{$w$-moves for based and degenerate diagrams}
These moves are defined similarly to the regular version (Fig.\ref{pic:conjugacy}, top-left and top-right), with additional moves in the degenerate case, that modify the neighborhood of the arrows meeting at the degenerate point (Fig.\ref{pic:degwmove} shows the extreme cases -- there are obvious intermediate ones, when only one or two of the unseen arcs is empty). The arrows both change orientations if $w(g)=-1$, and keep the same otherwise.

\begin{figure}[h!]
\centering 
\psfig{file=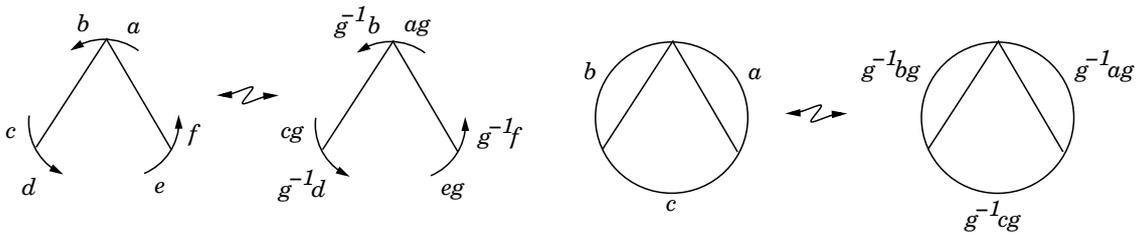,scale=0.67}
\caption{The \enquote{degenerate} $w$-move -- the most general and the most exceptional cases. The two arrows change orientations if and only if $w(g)=-1$.}\label{pic:degwmove} \end{figure}

\begin{definition}
Pick a based arrow diagram $A_\bullet$. If its base edge is bounded by twice the same arrow, then set $\delta_w (\left[ A_\bullet\right] )=0$. If it is bounded by two different arrows, then
\begin{enumerate}
\item pick a nice diagram $A_\bullet^{(1)}$ $\mathrm{AW}$-equivalent to $A_\bullet$,

\item set $\delta_w (\left[ A_\bullet\right] )=\left[ \delta(A_\bullet^{(1)})\right] $.
\end{enumerate}

Finally, set 
$$d_w(\left[ A\right] )=\delta_w (\left[\bullet(A) \right] ).$$
As usual, $\delta_w$ and $d_w$ are defined componentwise on formal series of $w$-orbits.

\end{definition}

\begin{proof}[Consistency of the definition]

Observe that when the endpoints of an edge belong to two different arrows, then any value can be given to its marking by using the appropriate $w$-move. This proves that step $1$ is always possible -- though not in a unique way.

For step $2$, first notice that two $w$-moves performed on different arrows from an arrow diagram commute, so that any finite sequence of $w$-moves amounts to a sequence made of one move for each arrow. If such a sequence leaves the marking of the base edge unchanged (and equal to $1$), then the moves on the two adjacent arrows must involve the same conjugating element $g$.
It follows that whenever $A_\bullet$ and $A^\prime_\bullet$ are nice and lie in the same $w$-orbit, 
\begin{itemize}
\item $\varepsilon(A_\bullet)=
\varepsilon(A^\prime_\bullet),$
\item the degenerate diagrams obtained by shrinking their bases also lie in the same orbit.
\end{itemize} 
Hence $\delta_w$ is well-defined.
\end{proof}

\subsubsection*{How to handle the quotients by $\nabla$ relations}

Note that the quotient of $\mathfrak{DA}$ by the $\nabla$ relations does not fit in the general framework in which symmetry preserving maps were introduced: indeed, it does not come from an equivalence relation at the level of the \textit{set} of diagrams. 

However, the set of classes of monotonic diagrams forms a basis of $\mathfrak{DA}/\nab$. This induces an injective section $i$ of the projection $s:\mathfrak{DA}\twoheadrightarrow\mathfrak{DA}/\nab$. Both $s$ and $i$ extend componentwise to formal series of $w$-orbits.

The same phenomenon happens between $\widehat{\mathfrak{DA}/\AW}$ and $\widehat{\mathfrak{DA}/\AWnab}$, because $w$-moves never change the status of a degenerate diagram -- monotonic or not -- and the set of monotonic $w$-orbits still forms a basis of $\mathfrak{DA}/\AWnab$.
Again there are maps $s_w$ and $i_w$ such that $s_w\circ i_w=\operatorname{Id}$, at the level of formal series.

Finally, this allows us to construct a symmetry-preserving map $\mathrm{S}_{w\nabla}^{\nabla}:\widehat{\mathfrak{DA}/\AWnab}\rightarrow \widehat{\mathfrak{DA}/\nab}$ , by regarding the restriction of $\mathrm{S}_{dw}^{d}: \widehat{\mathfrak{DA}/\AW}\rightarrow \widehat{\mathfrak{DA}}$ to the subspaces of monotonic diagrams.

\subsubsection*{Summary}
In the following diagram, the two squares and the two triangles on the left are commutative, as well as the internal and external squares on the right. Except for $\mathrm{S}_{w\nabla}^{\nabla}$,
all vertical arrows are symmetry-preserving injections in the usual sense.

\begin{center}
\begin{tikzpicture}[baseline=(current bounding box.center)]
\node (.) at (4,-0.8) {.};
\node (1) at (-5,2) {$\widehat{\mathfrak{A}}$};
\node (1b) at (-2.5,0.8) {$\widehat{\mathfrak{A}_\bullet}$};

\node (2) at (0,2) {$\widehat{\mathfrak{DA}}$};

\node (3) at (-5,-2) {$\widehat{\mathfrak{A}/\AW}
$};
\node (3b) at (-2.5,-0.8) {$\widehat{\mathfrak{A}_\bullet/\AW}
$};
\node (4) at (0,-2) {$\widehat{\mathfrak{DA}/\AW}$};
\node (6) at (2.6,-0.8) {$\widehat{\mathfrak{DA}/\AWnab}$};

\node (5) at (2.6,0.8) {$\widehat{\mathfrak{DA}/\nab}$};

\node (s) at (1.95,2.15) {$i$};

\node (sw) at (2.1,-2.2) {$i_w$};

\draw[->,>=latex] (1) -- (2) node[midway,above] {$d$};
\draw[->,>=latex] (1) -- (1b) node[midway,below] {$\bullet$};
\draw[->,>=latex] (1b) -- (2) node[midway,below] {$\delta$};

\draw[->,>=latex] (3) -- (3b) node[midway,above] {$\bullet$};
\draw[->,>=latex] (3b) -- (4) node[midway,above] {$\delta_w$};

\draw[right hook->,>=latex] (3) -- (1) node[midway,left] {$\mathrm{S}_{aw}^a$};
\draw[right hook->,>=latex] (3b) -- (1b) node[midway,left] {$\mathrm{S}_{\bullet w}^\bullet$};
\draw[right hook->,>=latex] (4) -- (2) node[midway,left] {$\mathrm{S}_{dw}^{d}$};
\draw[->,>=latex] (3) -- (4) node[midway,below] {$d_w$};
\draw[->>,>=latex] (4) -- (6) node[midway,above left] {$s_w$};
\draw[left hook->,>=latex] (6) to[out=245,in=350] (4);

\draw[->>,>=latex] (2) -- (5) node[midway,below left] {$s$};
\draw[right hook->,>=latex] (5) to[out=120,in=15] (2);

\draw[right hook->,>=latex] (6) -- (5) node[midway,right] {$\mathrm{S}_{w\nabla}^{\nabla}$};

\end{tikzpicture}
\end{center}

\begin{theorem}
\label{thm:mainorbits}
An arrow diagram series $\mathcal{A}$ is an arrow diagram formula if and only if each of the following holds:
\begin{enumerate}
\item $\mathcal{A}$ has a preimage $\mathcal{A}_w$ by $\mathrm{S}_{aw}^a$.
\item $\mathcal{A}_w$ is mapped to $0$ in $\widehat{\mathfrak{DA}/\AWnab}$.
\item $\mathcal{A}_w$ satisfies the equations
$$\begin{array}{ccc}

\left\langle  \mathcal{A}_w,\mathrm{T}_{aw}^a (\mathrm{AP_1})\right\rangle=0 & \text{and} & \left\langle  \mathcal{A}_w,\mathrm{T}_{aw}^a (\mathrm{AP_2^{(n-2),2}})\right\rangle=0
\end{array}.$$
\end{enumerate}

\end{theorem}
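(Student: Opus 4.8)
The plan is to stack the three invariance criteria already established — Proposition~\ref{prop:omega} for $w$-moves, Proposition~\ref{thm:R1et2} for $\mathrm{R}_1$ and $\mathrm{R}_2$, and Theorem~\ref{thm:main} for $\mathrm{R}_3$ — and to transport each of them through the symmetry-preserving injection $\S_{aw}^a$ by means of adjointness (Proposition~\ref{prop:ST}, applicable here by Remark~\ref{openbar}) and the commutativity of the large diagram preceding the statement. Recall first that, by Proposition~\ref{thm:PolyakAlg}, the map $G\mapsto\langle S(\mathcal{A}),I(G)\rangle=\langle\!\langle\mathcal{A},G\rangle\!\rangle$ is a virtual knot invariant — i.e. $\mathcal{A}$ is an arrow diagram formula — if and only if it kills the $\mathrm{R}_1$, $\mathrm{R}_2$, $\mathrm{R}_3$ and $\mathrm{W}$ relators, that is, if and only if it is invariant under $w$-moves and under $\mathrm{R}_1$, $\mathrm{R}_2$, $\mathrm{R}_3$ moves. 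So it suffices to match these four invariances with conditions $1$, $2$, $3$.

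First I would handle $w$-invariance: by Proposition~\ref{prop:omega} it is equivalent to $\mathcal{A}$ lying in the image of $\S_{aw}^a$, which is exactly condition~$1$; from now on fix a preimage $\mathcal{A}_w$ with $\S_{aw}^a(\mathcal{A}_w)=\mathcal{A}$. Next, $\mathrm{R}_1$- and $\mathrm{R}_2$-invariance: by Proposition~\ref{thm:R1et2} these amount to $\langle\mathcal{A},\mathrm{AP}_1\rangle=0$ and $\langle\mathcal{A},\mathrm{AP}_2^{(n-2),2}\rangle=0$. Since $\S_{aw}^a$ and $\T_{aw}^a$ are $\langle,\rangle$-adjoint, $\langle\mathcal{A},\mathrm{AX}\rangle=\langle\S_{aw}^a(\mathcal{A}_w),\mathrm{AX}\rangle=\langle\mathcal{A}_w,\T_{aw}^a(\mathrm{AX})\rangle$, so these two equations turn into precisely condition~$3$.

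It remains to deal with $\mathrm{R}_3$. Once $\mathrm{R}_2$-invariance is in hand (equivalently, the second equation of condition~$3$ holds), Theorem~\ref{thm:main} applies and says that $\mathrm{R}_3$-invariance is equivalent to $d(\mathcal{A})=0$ modulo the triangle relations, that is, to $s\bigl(d(\mathcal{A})\bigr)=0$ in $\widehat{\mathfrak{DA}/\nab}$. Now I chase the diagram: commutativity of its external right-hand square gives $s\circ d\circ\S_{aw}^a=\mathrm{S}_{w\nabla}^{\nabla}\circ s_w\circ d_w$, so $s\bigl(d(\mathcal{A})\bigr)=\mathrm{S}_{w\nabla}^{\nabla}\bigl(s_w(d_w(\mathcal{A}_w))\bigr)$; since $\mathrm{S}_{w\nabla}^{\nabla}$ is injective (it is a restriction of the injective map $\S_{dw}^{d}$), this vanishes if and only if $s_w(d_w(\mathcal{A}_w))=0$ in $\widehat{\mathfrak{DA}/\AWnab}$, which is condition~$2$. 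Stringing the three steps together, in both directions, yields the theorem; I would just note that the hypothesis of Theorem~\ref{thm:main} is available in each direction — forward because being an invariant forces $\mathrm{R}_2$-invariance, backward because condition~$3$ supplies it — so that Theorem~\ref{thm:main} is genuinely used as an equivalence.

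The delicate point is this last diagram chase, because $\widehat{\mathfrak{DA}/\nab}$ and $\widehat{\mathfrak{DA}/\AWnab}$ are not quotients coming from equivalence relations on the underlying set of diagrams, so $\mathrm{S}_{w\nabla}^{\nabla}$ is not one of the generic symmetry-preserving injections; one must use the sections $i$, $i_w$ onto the monotonic-diagram bases to even define it, check that the right-hand squares of the diagram really commute (which boils down to verifying that $d$, $\bullet$, $\delta$ are compatible with passing to $w$-orbits and with the $\nabla$-quotient), and confirm its injectivity. All of this bookkeeping is prepared in the paragraphs preceding the statement; once it is granted, the argument above is purely formal.
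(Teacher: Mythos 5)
Your proposal is correct and follows essentially the same route as the paper: condition 1 from Proposition~\ref{prop:omega}, condition 3 from Proposition~\ref{thm:R1et2} via the adjointness in Proposition~\ref{prop:ST}, and condition 2 from Theorem~\ref{thm:main} combined with the commutativity of the summary diagram and the injectivity of $\mathrm{S}_{w\nabla}^{\nabla}$. Your explicit remark that the $\mathrm{R}_2$-invariance hypothesis of Theorem~\ref{thm:main} is available in both directions is a point the paper leaves implicit, but the argument is the same.
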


\begin{proof}
$1$ is necessary because of Proposition~\ref
{prop:omega}, and $3$ because of Proposition~\ref{thm:R1et2} and point $2$ of Proposition~\ref{prop:ST}.
If $1$ and $3$ are satisfied, then Theorem~\ref{thm:main} implies that $\mathcal{A}$ defines an arrow diagram formula if and only if $s(d(\mathcal{A}))=0$. This is equivalent to $\mathrm{S}_{w\nabla}^{\nabla}( s_w( d_w(\mathcal{A}_w)))=0$ since the diagram commutes, and to $s_w( d_w(\mathcal{A}_w)))=0$ since $\mathrm{S}_{w\nabla}^{\nabla}$ is injective.
\end{proof}

\begin{remark}
To apply the above theorem to an element of $\widehat{\mathfrak{A}/\AW}$, one never needs to push it through symmetry-preserving maps (in the upper half of the diargam). Hence the checkings are done using the most compact expressions.
Also, Point $3$ can be checked separately on each $w$-orbit that happens in $\mathcal{A}_w$, and for each of them it can be checked on any representative diagram by a simple criterion.\newline
For $\mathrm{AP}_1$ relations to hold, the situation at the left of Fig.\ref{R1R2orbits} simply must never happen (the $1$-marking is invariant under $w$-moves).\newline
For $\mathrm{AP_2^{(n-2),2}}$, the situations at the middle and at the right of the picture are forbidden:
\begin{enumerate}
\item if $w(g)=1$ and the arrows have \enquote{the same} orientation (in the sense of the picture),
\item if $w(g)\neq 1$ and the arrows have \enquote{different} orientations.
\end{enumerate}
Again these conditions are stable by $w$-moves.
\end{remark}

\begin{figure}[h!]
\centering 
\psfig{file=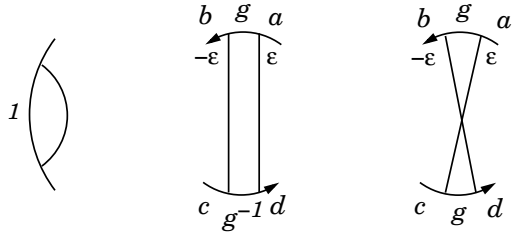,scale=0.67}
\caption{Forbidden situations -- the rules for the arrow orientations are explained above.}\label{R1R2orbits} \end{figure}

\subsection{Examples and applications}\label{sec:Grishanov}

It was noticed by M. Polyak \citep{PolyakTalk} that several families of formulas describing the finite-type invariants extracted from the Conway polynomial \citep{ChmutovKhouryRossi, Brandenbursky} actually define invariants of virtual knots. That also led him to conjecture that there should be an invariance criterion such as the one we give here for $\mathrm{R}_3$-moves.

We describe here two other families of examples: first, Grishanov-Vassiliev formulas \cite{Grishanov}, which are extended to degenerate cases that were excluded in the original paper, and second, given a surface $\Sigma$ we describe a regular invariant that can reasonably be used as a definition of the Whitney index for virtual knots whose projection on $\Sigma$ is a non nullhomotopic curve.

\subsubsection{Grishanov-Vassiliev's planar chain invariants}

\begin{definition}\label{def:nakedArrow}
A \textit{naked arrow diagram} is an arrow diagram with every decoration forgotten except for the local orientations. It is called \textit{planar} if no two of its arrows intersect -- thus one may regard it as a part of the plane, up to isotopy.

A \textit{chain presentation} of such a diagram with $n$ arrows is a way to number its $n+1$ bounded complementary components in the plane from $1$ to $n+1$, in such a way that the numbering increases when one goes from the left to the right of an arrow.

Let $U_n$ be the sum of all planar isotopy equivalence classes of chain presentations of naked arrow diagrams of degree $n$.
$U_n$ is called the \textit{universal degree $n$ planar chain} (\cite{Grishanov}, Definition 1).
\end{definition}

\begin{definition}
\label{def:Conjmap}
An \textit{$h_1$-decorated planar diagram} is the result of assigning an element of $ h_1(\Sigma)\setminus\left\lbrace  1\right\rbrace$ to each region in a planar naked arrow diagram.
\end{definition}

We consider two ways to construct such diagrams:
\begin{enumerate}
\item From the datum of a chain presentation together with a system $\Gamma=\left\lbrace \gamma_1,\ldots,\gamma
_{n+1}\right\rbrace$ -- which yields the notion of $\Gamma$-decorated diagrams.
\item From a planar arrow diagram on $\pi=\pi_1(\Sigma)$: each region of the diagram receives the conjugacy class of the product of the $\pi$-markings at its boundary, in the order induced by the orientation of the circle (the product is not well-defined, but its conjugacy class is).
\end{enumerate}

Call $\Phi_\Gamma$ the sum obtained by decorating every diagram in $U_n$ with a fixed system $\Gamma$ (\cite{Grishanov}, Definition 2). Note that some of the summands in $U_n$ -- namely those with non trivial symmetries -- may lead to the same decorated diagram if some of the $\gamma_i$'s are equal; unlike Grishanov-Vassiliev, we do not forbid that. Of course these summands happen with coefficients greater than $1$ in $\Phi_\Gamma$.

To understand $\Phi_\Gamma$ as an arrow diagram series, we apply the machinery of symmetry-preserving maps from Subsection~\ref{sec:Sinj}. Even though it is absent from the notations, all considered diagrams are planar:

\begin{itemize}
\item $\mathfrak{A}^\Gamma$ is the $\QQ$-space generated by $\Gamma$-decorated planar diagrams (hence of degree $n$). 

\item $\mathfrak{A}^{\rightsquigarrow \Gamma}$ is the subspace of $\mathfrak{A}_n$ generated by planar arrow diagrams on $\pi_1(\Sigma)$ \textit{that induce $\Gamma$-decorated diagrams}.

\item $\mathfrak{A}_n^{1,2,\ldots}$ is the $\QQ$-space generated by all chain presentations of planar naked arrow diagrams of degree $n$. 
\end{itemize}

Note that $\mathfrak{A}_n^{1,2,\ldots}$ is not defined by an equivalence relation on rigid Gauss diagrams on $\pi_1(\Sigma)$. The father of all types of Gauss diagrams here is the type of planar Gauss diagrams on $\pi_1(\Sigma)$ endowed with a chain presentation, such that the chain presentation and the $\pi$-markings induce the same $h_1$-decorated diagram -- it is the pullback of Diagram \eqref{eq:proofGV} below. 

Let us denote by $\S_\Gamma^a$ the symmetry-preserving map $ \widehat{\mathfrak{A}^\Gamma
}
\hookrightarrow \widehat{\mathfrak{A}_n}$, and set $$\widetilde{\Phi}_\Gamma= \S_\Gamma^a(\Phi_\Gamma).$$

\begin{theorem}\label{thm:Grishanov}
For any system of non-trivial conjugacy classes $\Gamma=(\gamma_1,\ldots,\gamma_{n+1})$, $\widetilde{\Phi}_\Gamma$ defines an arrow diagram formula for virtual knots.
\end{theorem}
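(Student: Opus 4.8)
The plan is to verify the three conditions of Theorem~\ref{thm:mainorbits} for the homogeneous degree-$n$ arrow diagram series $\mathcal{A}=\widetilde{\Phi}_\Gamma$. First I would make $\widetilde{\Phi}_\Gamma$ explicit: plugging $\Gamma$ into every summand of the universal planar chain $U_n$ and then applying $\S_\Gamma^a$ amounts, by Lemma~\ref{Sfund} together with the commutativity of the pullback diagram~\eqref{eq:proofGV}, to forming the symmetry-normalized formal sum of all planar arrow diagrams on $\pi_1(\Sigma)$ whose induced $h_1$-decoration is a $\Gamma$-decoration. I will use this description throughout, together with the fact that $\Sigma$ being oriented makes $w=w_0$ trivial.

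For Condition~$1$, observe that a $w$-move leaves the degree and the planarity of an arrow diagram unchanged and does not modify the induced $h_1$-decorated diagram (it replaces the $\pi$-markings of the edges adjacent to one arrow by conjugates, while an $h_1$-label records only a conjugacy class). Hence the set of planar arrow diagrams on $\pi_1(\Sigma)$ inducing a $\Gamma$-decoration is stable under $w$-moves, so $\widetilde{\Phi}_\Gamma$ has the form $\S_{aw}^a(\mathcal{A}_w)$, where $\mathcal{A}_w\in\widehat{\mathfrak{A}/\AW}$ is the sum of the corresponding $w$-orbits with multiplicities inherited from $\Phi_\Gamma$.

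For Condition~$3$ I would apply the naked-eye criterion stated after Theorem~\ref{thm:mainorbits}, checking on one representative of each $w$-orbit that no forbidden $\mathrm{AP}_1$ or $\mathrm{AP}_2^{(n-2),2}$ configuration occurs. An $\mathrm{AP}_1$-forbidden arrow (isolated, enclosing an edge marked $1$) would cut off a complementary region whose $h_1$-label is $1$, which is excluded because every $\gamma_i$ is non-trivial. For $\mathrm{AP}_2^{(n-2),2}$ only the case $w(g)=1$ is relevant here; a pair of arrows with consecutive heads, consecutive tails and concordant orientation bounds a bigon region, and after normalizing the two edges of that bigon to be marked $1$ by $w$-moves its $h_1$-label becomes $1$, again contradicting non-triviality. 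This is the mechanism by which Grishanov--Vassiliev formulas are automatically $\mathrm{R}_1$- and $\mathrm{R}_2$-invariant.

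The hard part is Condition~$2$, which is precisely the $\mathrm{R}_3$-invariance criterion of Theorem~\ref{thm:main} re-expressed via $d_w$: one must show $s_w(d_w(\mathcal{A}_w))=0$ in $\widehat{\mathfrak{DA}/\AWnab}$. Unwinding the definitions, $d_w(\mathcal{A}_w)$ is the sum, over planar arrow diagrams $A$ in the support of $\mathcal{A}_w$ and over edges $e$ of $A$ bounded by two distinct arrows, of $\varepsilon(A_\bullet)$ times the degenerate planar diagram obtained from $A$ by shrinking $e$; modulo the triangle relations one keeps only the monotonic degenerate diagrams, so it is enough to prove that the coefficient of each such diagram vanishes. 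The local argument I would run relies on three ingredients: $U_n$ contains \emph{every} planar chain of degree $n$, so that all the local resolutions of a given degeneracy really occur in $\mathcal{A}_w$; the sign $\varepsilon$ of a based diagram matches the sign with which the corresponding term enters a triangle relation of Fig.\ref{pic:triangle}; and the local resolution does not change the $h_1$-labels of the complementary regions, so the $\Gamma$-decoration is respected all along. Combining these, the contributions organise into complete triangle relations, giving $s_w(d_w(\mathcal{A}_w))=0$. This is the local computation of Grishanov and Vassiliev in \cite{Grishanov}, now carried out uniformly for an arbitrary system $\Gamma$, which is exactly what lets the degenerate cases be included. With the three conditions in hand, Theorem~\ref{thm:mainorbits} shows that $\widetilde{\Phi}_\Gamma$ is an arrow diagram formula, and Theorem~\ref{thm:ArrowHomogeneous} then gives that it defines a virtual knot invariant.
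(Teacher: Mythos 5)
Your overall strategy coincides with the paper's: both verify $w$-invariance via the factorization of $\S_\Gamma^a$ through $\S_{aw}^a$ (your Condition~1 is the paper's Step~1, word for word in substance), and both get $\mathrm{R}_1$/$\mathrm{R}_2$ invariance from the non-triviality of the $\gamma_i$'s. For $\mathrm{R}_3$ you choose the degenerate-diagram criterion ($s_w(d_w(\mathcal{A}_w))=0$ modulo triangle relations) where the paper checks orthogonality to the reduced $A6T$ relations directly; these are equivalent by Theorem~\ref{thm:main}, so the choice of formulation is legitimate.

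However, your $\mathrm{R}_3$ step has a genuine gap exactly where the theorem is hard. You assert that ``the contributions organise into complete triangle relations'' because $U_n$ contains every planar chain, so all local resolutions of a degeneracy occur in $\mathcal{A}_w$. Occurrence is not enough: the three planar diagrams $A_1,A_2,A_3$ entering a reduced relation appear in $\widetilde{\Phi}_\Gamma$ with \emph{different} coefficients, namely (after the symmetry normalization) the numbers $\sharp\operatorname{Dec}(A_i)$ of chain presentations compatible with their induced $h_1$-decorations -- and when some $\gamma_i$ coincide these numbers can exceed $1$ and need not agree. The cancellation you need is the identity $\sharp\operatorname{Dec}(A_1)=\sharp\operatorname{Dec}(A_2)+\sharp\operatorname{Dec}(A_3)$, and the only place the defining constraint of a chain presentation (the numbering increases from left to right of each arrow) enters the whole proof is in establishing it: an element of $\operatorname{Dec}(A_1)$ is sent to $\operatorname{Dec}(A_2)$ or to $\operatorname{Dec}(A_3)$ according to whether the labels $i,j$ of the two regions adjacent to the resolved configuration satisfy $i<j$ or $i>j$, and this splitting is a bijection. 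Your three ``ingredients'' never invoke the ordering condition, so as written the argument would equally ``prove'' invariance for an arbitrary sum of decorated planar diagrams, which is false. Relatedly, before that you need the identification $\left\langle \widetilde{\Phi}_\Gamma, A\right\rangle=\sharp\operatorname{Dec}(A)$, which the paper obtains from the adjunction of Proposition~\ref{prop:ST} together with the observation that chain-presented diagrams have no non-trivial symmetries; this bookkeeping is what lets the symmetry weights in $\S_\Gamma^a$ disappear from the count, and it is absent from your sketch. Citing ``the local computation of Grishanov and Vassiliev'' does not fill the hole, since the extension to ambiguous $\Gamma$ (coinciding $\gamma_i$'s, hence multiplicities $>1$) is precisely the new content of the theorem.
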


The reason why $\widetilde{\Phi}_\Gamma$ coincides \textit{as an invariant} with Grishanov-Vassiliev's ${\Phi}_\Gamma$ is Point $2$ of Proposition~\ref{prop:ST}. This theorem improves Theorem $1$ of \cite{Grishanov}, since we remove the assumption that $\Gamma$ is unambiguous (\ie any of the $\gamma_i$'s may coincide) and show that $\widetilde{\Phi}_\Gamma$ is an invariant for virtual knots. 

\begin{proof}

$1.$ First, notice that an equivalence class of arrow diagrams under $\mathrm{AW}$-moves determines an $h_1$-decorated diagram. Thus the map $\S_\Gamma^a$ factorizes through $\S_{aw}^a$ (by point $1.$ of Proposition~\ref{prop:ST}), wherefrom Proposition~\ref{prop:omega} implies that $\widetilde{\Phi}_\Gamma$ defines an invariant under $w$-moves.

$2.$ The fact that no $\gamma_i$ may be trivial gives immediately the condition of $\mathrm{R}_1$ and $\mathrm{R}_2$ invariance from Proposition~\ref{thm:R1et2}.

$3.$ For $\mathrm{R}_3$, the more convenient here is to check condition $3$ of Theorem~\ref{thm:main}. In any $A6T$ relation, only three diagrams possibly have pairwise non intersecting arrows, and either all three of them do, either no one does. This yields two kinds of reduced relations. Let us consider only that on Fig.\ref{pic:reduced6T} (ignore the markings $i$, $j$, $k$ for now), the other case is completely similar. Write the relator of the picture as $A_1-A_2-A_3$ in the order of reading. One has to prove that $$\left\langle  \widetilde{\Phi}_\Gamma,
A_1
\right\rangle=\left\langle  \widetilde{\Phi}_\Gamma,
A_2\right\rangle+\left\langle  \widetilde{\Phi}_\Gamma,
A_3\right\rangle.$$

\begin{figure}[h!]
\centering 
\psfig{file=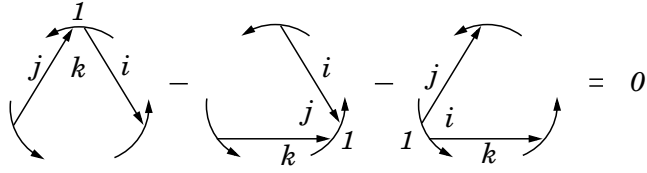,scale=0.60}
\caption{One of the two reduced $6$-term relations for planar diagrams}\label{pic:reduced6T}
\end{figure}

The three spaces defined previously fit into the diagram

\begin{equation} \label{eq:proofGV}
\begin{tikzpicture}[baseline=(current  bounding  box.center)]
\node (1) at (0,0) {$\mathfrak{A}^\Gamma$};
\node (.) at (2.5,0) {.};
\node (2) at (1.5,1.5) {$\mathfrak{A}_n^{1,2,\ldots}$};
\node (3) at (-1.5,1.5) {$\mathfrak{A}^{\rightsquigarrow \Gamma}$};
\draw[->>,>=latex] (2) -- (1) node[midway,below right] {$\T^{1,2,\ldots}_\Gamma$};
\draw[->>,>=latex] (3) -- (1) node[midway,below left] {$\T^a_\Gamma$};
\end{tikzpicture}
\end{equation}

Pick a planar arrow diagram $A$ of degree $n$, and consider the set $\operatorname{Dec}(A)$ of all rigid representatives of all preimages of $\T_\Gamma^{a}(A)$ under the map $\T^{1,2,\ldots}_\Gamma$. By Definition~\ref{def:Sinj}, the cardinality of that set is the sum of all coefficients of $\S_\Gamma^{1,2,\ldots}(\T_\Gamma^{a}(A))$, which means, since the sum of all generators of $\mathfrak{A}_n^{1,2,\ldots}$ is $U_n$, that:

$$\sharp \operatorname{Dec}(A) = \left(U_n,\S_\Gamma^{1,2,\ldots}(\T_\Gamma^{a}(A))\right).$$

But no diagram decorated with a chain presentation admits non-trivial symmetries, whence, by successive applications of Proposition~\ref{prop:ST} $2.$, 

$$\begin{array}{ccl}
\sharp \operatorname{Dec}(A) & = & \left\langle   U_n,\S_\Gamma^{1,2,\ldots}(\T_\Gamma^{a}(A))\right\rangle\\
& = & \left\langle  \Phi_\Gamma,\T_\Gamma^{a}(A)\right\rangle\\
& = & 
\left\langle  \widetilde{\Phi}_\Gamma,A
\right\rangle.
\end{array}$$
Now it remains to see that
$$
\sharp \operatorname{Dec}(A_1)=\sharp \operatorname{Dec}(A_2)+\sharp \operatorname{Dec}(A_3).$$
Look at Fig.\ref{pic:reduced6T} again, now considering the markings $i$, $j$, $k$. Each element of $\operatorname{Dec}(A_1)$ determines either an element of $\operatorname{Dec}(A_2)$, or an element of $\operatorname{Dec}(A_3)$, as indicated by the picture, depending on whether $i<j$ or $i>j$.
This separates $\operatorname{Dec}(A_1)$ into two parts, respectively in bijection with $\operatorname{Dec}(A_2)$ and $\operatorname{Dec}(A_3)$, and terminates the proof.

\end{proof}

\subsubsection{There is a Whitney index for non nullhomotopic virtual knots}

In the classical framework, the Whitney index is an invariant of regular plane curve homotopies which, together with the total writhe number, classifies the representatives of any given knot type up to regular isotopy. In other words, these invariants count the (algebraic) number of Reidemeister I moves that \textit{have to} happen in a sequence of moves connecting two given diagrams. Here we describe such an invariant for \textit{virtual} knot diagrams whose underlying curve on $\Sigma$ is not homotopically trivial.

\subsubsection*{The classical Whitney index}

Let $\delta:\SS^1\rightarrow\RR^2$ be a smooth immersion (with non-vanishing differential). There is an associated Gauss map
$$\begin{array}{cccl}
\Gamma: & \SS^1 & \rightarrow & \SS^1 \\
 & p & \mapsto & u_p(\delta)
\end{array},$$
where $u_p(\delta)$ is the unitary tangent vector to $\delta$ at the point $p$. It depends on a trivialization of the tangent space to $\RR^2$.

\begin{definition}[usual Whitney index]
The index of the above Gauss map only depends on the homotopy class of $\delta$ within the space of smooth immersions. It is called the \textit{rotation number}, or \textit{Whitney index}, of $\delta$.\end{definition}

Given a projection $\RR^3\rightarrow\RR^2$, a generic isotopy of a knot $\SS^1\rightarrow\RR^3$ is called a \textit{regular isotopy} if the corresponding sequence of Reidemeister moves does not involve R-I. Clearly, the Whitney index of planar loops induces an invariant of regular isotopy classes of knots. In practice, it can be easily computed by looking at the Seifert circles of the projection (each of them contributes by $+1$ or $-1$). The total writhe number of a classical knot projection is also an invariant of regular isotopy. These two invariants together satisfy the following

\begin{lemma}[see \citep{KonK}]\label{lem:regularisotopy}
Two equivalent knot diagrams are regularly equivalent if and only if their projections have the same total writhe and the same Whitney index.
\end{lemma}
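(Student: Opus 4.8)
The plan is to prove the forward implication directly and deduce the converse from a normalisation of Reidemeister sequences. Write $\mathrm{wr}(D)$ for the total writhe and $\mathrm{wh}(D)$ for the Whitney index of a diagram $D$, and recall that ``regularly equivalent'' means related by planar isotopy together with $\mathrm{R}_2$ and $\mathrm{R}_3$ moves only. For the easy direction I would observe that an $\mathrm{R}_2$ move inserts two crossings of opposite writhe and alters the underlying immersed curve by a null-homotopic finger move, while an $\mathrm{R}_3$ move fixes all crossings and is a regular homotopy of the curve; hence $\mathrm{wr}$ and $\mathrm{wh}$ are both unchanged by regular isotopy, so regularly equivalent diagrams share these two numbers. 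I would also record, for later use, that a single $\mathrm{R}_1$ move changes the pair $(\mathrm{wr},\mathrm{wh})$ by one of the four values $(\pm 1,\pm 1)$, all of which occur (the four flavours of curl) -- so one may speak of an $\mathrm{R}_1$ move creating a \emph{kink of type $(\epsilon,\delta)$} -- and that a kink of type $(\epsilon,\delta)$ placed next to one of type $(-\epsilon,-\delta)$ on the same arc can be removed by a single $\mathrm{R}_2$ move, since such a move preserves both quantities.

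For the converse, start from a finite sequence of Reidemeister moves joining two equivalent diagrams $D_1$, $D_2$ with $\mathrm{wr}(D_1)=\mathrm{wr}(D_2)$ and $\mathrm{wh}(D_1)=\mathrm{wh}(D_2)$. The key tool is a \emph{kink-sliding lemma}: a kink may be slid along its strand and over or under any crossing, and a strand may be pushed into or out of the loop of a kink, using only $\mathrm{R}_2$ and $\mathrm{R}_3$ moves. Using it, I would commute each $\mathrm{R}_1$ move in the sequence past the neighbouring $\mathrm{R}_2$, $\mathrm{R}_3$ moves -- sliding the kink clear of the region where those act -- so as to push every kink-\emph{creating} move to the front and every kink-\emph{destroying} move to the back. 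This rewrites the sequence as
$$D_1\ \xrightarrow{\ \text{creations }\mathcal{C}_1\ }\ D_1^{+}\ \xrightarrow{\ \mathrm{R}_2,\ \mathrm{R}_3\ }\ D_2^{+}\ \xleftarrow{\ \text{creations }\mathcal{C}_2\ }\ D_2,$$
with the middle portion an honest regular isotopy, $D_1^{+}=D_1$ with the multiset of kinks $\mathcal{C}_1$ adjoined and $D_2^{+}=D_2$ with $\mathcal{C}_2$ adjoined.

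Since the middle portion preserves $\mathrm{wr}$ and $\mathrm{wh}$, the collections $\mathcal{C}_1$ and $\mathcal{C}_2$ have equal total writhe and equal total rotation contribution -- this is where the two hypotheses on $D_1$ and $D_2$ are spent. I would then invoke the kink-sliding lemma once more inside a fixed diagram: gather the kinks of a collection and annihilate opposite pairs, checking that the collection reduces by regular isotopy to a canonical minimal collection $M$ determined only by its total writhe and total rotation. Hence $D_1^{+}$ and $D_2^{+}$ are regularly isotopic to $D_1\cup M$ and $D_2\cup M$ respectively for the \emph{same} $M$, so $D_1\cup M$ and $D_2\cup M$ are regularly isotopic. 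Adjoining to both sides a mirror collection $\overline{M}$ with $\mathrm{wr}(M\cup\overline M)=\mathrm{wh}(M\cup\overline M)=0$ -- which therefore annihilates completely under a sequence of $\mathrm{R}_2$ moves after its kinks are brought into opposite pairs -- gives
$$D_1\ \approx\ D_1\cup M\cup\overline M\ \approx\ D_2\cup M\cup\overline M\ \approx\ D_2,$$
where $\approx$ denotes regular isotopy, which is the claim. I expect the main obstacle to be the kink-sliding lemma and the front/back normalisation step: the reordering requires a finite but fussy case analysis of the diagrammatic pictures, in particular of what happens when the self-crossing of a kink is momentarily one of the crossings involved in an $\mathrm{R}_3$ move; everything else is the linear bookkeeping of $(\mathrm{wr},\mathrm{wh})$ and the elementary cancellation of opposite kink pairs.
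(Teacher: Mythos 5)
Your argument is essentially the proof the paper has in mind: the paper omits it, citing \citep{KonK}, and says only that it rests on the Whitney trick and the variation table of the four kink types --- which is exactly your kink-sliding lemma, the front/back normalisation of the Reidemeister sequence, and the reduction of each kink collection to a canonical one determined by its total $(\mathrm{wr},\mathrm{wh})$. The one overstatement is that an adjacent pair of kinks of types $(\epsilon,\delta)$ and $(-\epsilon,-\delta)$ always cancels by a single $\mathrm{R}_2$ move: in one of the two relative configurations the required bigon is not present and one must first pull one loop through the other by a short sequence of $\mathrm{R}_2$ and $\mathrm{R}_3$ moves --- that sequence is precisely the Whitney trick, so the needed conclusion (opposite kinks cancel under regular isotopy, in either order of adjacency) still holds and your proof goes through.
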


The proof (which we omit) relies essentially on the \textit{Whitney trick} (see \citep{KonK}) and the following variation table:
\begin{figure}[h!]
\centering 
\psfig{file=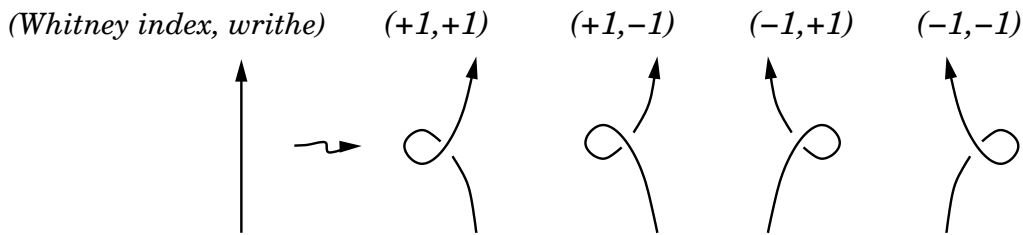,scale=1}
\caption{R-I moves sorted by their effect on the regular invariants}\label{RIsorted}
\end{figure}

\subsubsection*{The virtual framework}

We want to define a Whitney index for virtual knots, that satisfies a version of Lemma~\ref{lem:regularisotopy}. 

Given the virtual Reidemeister I moves, it does not seem reasonable to hope for counting the degree of a Gauss map. Relatedly, the Seifert circles are not embedded any more, and they do not have a well-defined contribution (at least not in the previous sense). Even when one looks only at real knot diagrams, the Whitney index is no more invariant when virtual moves are allowed: see Fig.\ref{pic:regvir}. In other words, though virtual moves do not connect knot diagrams that are not isotopic in the usual sense, they do add bridges between different regular isotopy classes.

\begin{figure}[h!]
\centering 
\psfig{file=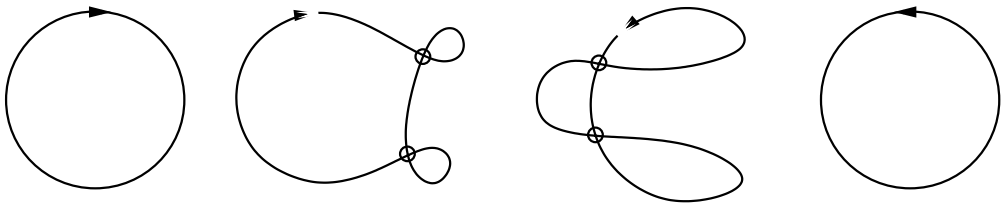,scale=1.2}
\caption{A \enquote{regular} sequence that changes the usual Whitney index}\label{pic:regvir}
\end{figure}

From now on, let $\Sigma$ be an orientable surface with non trivial fundamental group.

\begin{definition}\label{def:regvirtuals}
Two virtual knot diagrams in $\Sigma$ are called \textit{regularly equivalent} if they are connected by a sequence of moves that does not involve the \textit{real} Reidemeister I move.
\end{definition}

In \citep{KauffmanVKT99}, regular equivalence is defined as equivalence under all moves but the real R-I \textit{and} the virtual R-I too. Here, our goal is to define maps on the set of Gauss diagrams, so implicitly they must be invariant under all detour moves anyway.

\begin{figure}[h!]
\centering 
\psfig{file=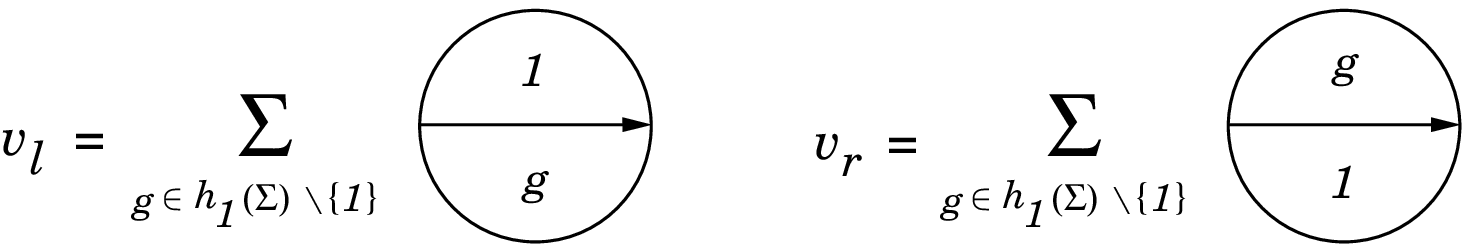,scale=0.85}
\caption{The regular invariants for non nullhomotopic virtual knots}\label{pic:regvirtual}
\end{figure}

\begin{lemma}\label{lem:reginsigma}
The $h_1$-decorated diagram series $v_l$ and $v_r$ from Fig.\ref{pic:regvirtual} define invariants of regular equivalence. Moreover, two virtual knot diagrams from the same knot type with non trivial homotopy class are regularly equivalent as soon as both $v_l$ and $v_r$ coincide on them.
\end{lemma}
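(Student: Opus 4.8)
The statement has two independent parts, which I would prove separately. For the first part --- that $v_l$ and $v_r$ are regular invariants --- I would feed them through the criteria of Subsection~\ref{sec:polyak}. Because $v_l$ and $v_r$ carry only $h_1$-decorations they are constant on $\mathrm{AW}$-orbits, so, exactly as in part~$1$ of the proof of Theorem~\ref{thm:Grishanov}, they lie in the image of $\S_{aw}^a$, which gives invariance under $w$-moves (Proposition~\ref{prop:omega}). Each diagram occurring in $v_l$ or $v_r$ is built around a single kink-producing arrow, isolated in the strong sense that no $\mathrm{R}_2$- or $\mathrm{R}_3$-move can create, destroy, or even touch it; hence the forbidden $\mathrm{AP}_2^{(n-2),2}$ situations of Fig.\ref{R1R2orbits} never occur, and the $A6T$ condition (equivalently $d_w=0$) of Theorem~\ref{thm:main} is satisfied for trivial reasons. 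Proposition~\ref{thm:R1et2} and Theorem~\ref{thm:main} then yield invariance under $\mathrm{R}_2$- and $\mathrm{R}_3$-moves, while invariance under diagram isotopies and detour moves is automatic. I would deliberately \emph{not} verify the $\mathrm{AP}_1$ condition: it fails on purpose (the left-hand picture of Fig.\ref{R1R2orbits} is precisely a kink), which is exactly why $\mathrm{R}_1$ is the only excluded move and why $v_l$ and $v_r$ detect regular equivalence rather than plain equivalence.

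For the second part, start from a sequence of moves witnessing $D_1\sim D_2$. Each move fixes the free homotopy class of the underlying curve, so every intermediate diagram is non-nullhomotopic and every kink appearing along the sequence has a well-defined handedness together with a non-trivial $h_1$-decoration. By the first part the real $\mathrm{R}_1$-moves in the sequence must have zero net effect on $v_l$ and on $v_r$; since $v_l$ and $v_r$ record exactly the type of a kink (compare Fig.\ref{RIsorted}) refined by its $h_1$-decoration, this says that births and deaths of kinks balance out within each class consisting of a fixed type and decoration. I would then match births to deaths inside each such class and, for every matched pair, excise the two $\mathrm{R}_1$-moves at the cost only of $\mathrm{R}_2$-, $\mathrm{R}_3$- and detour moves; after finitely many excisions the sequence contains no real $\mathrm{R}_1$-move, so $D_1$ and $D_2$ are regularly equivalent.

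The heart of the matter, and where I expect the real work, is this excision step: a surface-theoretic, virtual version of the Whitney trick underlying Lemma~\ref{lem:regularisotopy} (see \citep{KonK}). Given a birth of a kink of some type and decoration and, somewhere later, a death of a kink of the same type and decoration, one must cancel the pair. I would slide the doomed kink backwards through the sequence --- freely past virtual crossings using detour moves, and past real crossings and other kinks using the local Whitney trick --- until it reaches its partner; orientability of $\Sigma$ keeps the handedness constant along the slide, the $h_1$-decoration is transported rigidly, and the matching hypothesis is precisely what lets the two kinks annihilate. The bookkeeping nuisance is that a real crossing met by the sliding kink may itself take part in a Reidemeister move later in the sequence, so the reordering must be arranged with care; the plentiful detour moves available on a surface make this easier than in the classical planar case, and I expect the argument of \citep{KonK} to carry over essentially unchanged once kinks are tagged by their $h_1$-decoration.
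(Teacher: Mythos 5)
Your first part follows the paper's route exactly: $w$-invariance because $v_l$ and $v_r$ pull back to series of full $w$-orbits (the forgetful map from $w$-orbits to $h_1$-decorations being a bijection in restriction to these one-arrow diagrams, since $w$ is trivial on an orientable $\Sigma$), and $\mathrm{R}_2$/$\mathrm{R}_3$-invariance because the relators $\mathrm{AP}_2^{(n-2),2}$ and $A6T$ involve at least two arrows while every diagram occurring in $v_l$ and $v_r$ has exactly one, so all the relevant pairings vanish identically; the deliberate failure of $\mathrm{AP}_1$ is indeed the point. This is precisely what the paper's appeal to Proposition~\ref{prop:omega} and Theorem~\ref{thm:mainorbits} amounts to.

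For the second part you also take the paper's route (the variation table of Fig.\ref{RIsorted} adapted to $(v_l,v_r)$, plus a Whitney trick in which the little loop slides past virtual crossings by detour moves; the paper defers the rest to \citep{KonK}). One inference, however, is wrong as stated: from the vanishing of the net change of $(v_l,v_r)$ you conclude that births and deaths balance \emph{within each fixed type} of kink. They need not. The four increments $(0,+1)$, $(-1,0)$, $(+1,0)$, $(0,-1)$ are not linearly independent, so for instance a net surplus of one birth of the $(0,+1)$ type together with one birth of the $(0,-1)$ type also has zero total effect. What the table actually yields is that the unmatched $\mathrm{R}_1$-moves pair off between \emph{conjugate} types (first with fourth, second with third) --- and these conjugate pairs are exactly the pairs of coexisting kinks that the Whitney trick annihilates, whereas your same-type birth/death pairs are handled by transporting a kink from its birthplace to its deathplace. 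Your excision scheme therefore needs this second kind of pair as well; with that amendment the bookkeeping closes and the argument coincides with the (omitted) classical one underlying Lemma~\ref{lem:regularisotopy}. The refinement of the matching by $h_1$-decoration is vacuous here, since every kink on a given diagram carries the same decoration (trivial small loop, the knot's class on the large one).
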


\begin{proof}
\textit{First assertion.} First, notice that since $\Sigma$ is orientable, $w=w_1(T\Sigma)$ is trivial and the $w$-moves never change the orientation of an arrow. Hence a $w$-orbit of planar Gauss diagrams on $\pi_1(\Sigma)$ determines an $h_1(\Sigma)$-decorated diagram (see Definition~\ref{def:Conjmap}). In restriction to the diagrams that happen in $v_l$ and $v_r$, this forgetful map is actually a $1-1$ correspondence. 
Hence $v_l$ and $v_r$ can be regarded as series of $w$-orbits of Gauss diagrams, so that they satisfy the $w$ invariance criterion (Proposition~\ref{prop:omega}). The invariance under $\mathrm{R}_2$ and $\mathrm{R}_3$ moves follows from Theorem \ref{thm:mainorbits}.

\textit{Second assertion.} The proof is essentially the same as that of Lemma~\ref{lem:regularisotopy}. A little loop can run along a knot diagram without using R-I moves even where there are virtual crossings.  
The table from Fig.\ref{RIsorted} becomes, respectively (for the couple ($v_l,v_r$)):
$$\begin{array}{cccc}
(0,+1) & (-1,0) & (+1,0) &(0,-1).
\end{array}$$
This is the essential reason for which one needs the assumption that the knot diagrams are not nullhomotopic: otherwise the increase would be $(0,0)$ for all R-I moves.
\end{proof}

Finally, looking at the above table, which details \textit{how} $v_l$ and $v_r$ control the R-I moves, it appears that:

\textbf{1.} $v_r+v_l$ behaves like the total writhe number under Reidemeister moves: it has the same \enquote{derivative}. It follows that these differ by a constant that depends only on the virtual knot type, \ie a virtual knot invariant in the usual sense (see Fig.\ref{pic:totwrithe}).

\begin{figure}[h!]
\centering 
\psfig{file=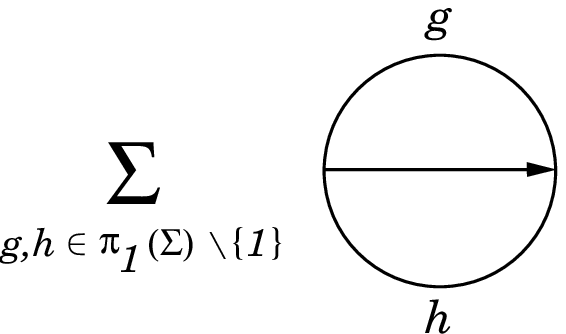,
scale=0.87}
\caption{The difference between $v_r+v_l$ and the (usual) total writhe number}\label{pic:totwrithe}
\end{figure}

\textbf{2.} \textit{In restriction to real knot diagrams}, $v_r-v_l$ has the same derivative as the Whitney index, with respect to Reidemeister moves. Hence there is an invariant of real knots $c$ such that for every real knot diagram $D$, $v_r(D)-v_l(D)+c(D)$ is the usual Whitney index of $D$.

Hence we have proved:

\begin{deflemma}Call $v_r-v_l$ the \emph{Whitney index} of non nullhomotopic virtual knot diagrams, and call $v_r+v_l$ their \emph{writhe number}. These make Lemma~\ref{lem:regularisotopy} hold for non nullhomotopic virtual knot diagrams.
\end{deflemma}

\bibliographystyle{plain}
\bibliography{bibli}

\end{document}